% -------------------------------------------------------------
% AMS-LaTeX Paper ************************************************
% **** -----------------------------------------------------------
\documentclass{amsart}
\usepackage{tikz,tikz-cd}
\usepackage{url}
\usepackage{graphicx}
\usepackage{graphicx}
\usepackage{color}
\usepackage{times}
\usepackage{cite}
\usepackage{enumerate,latexsym}
\usepackage{latexsym}
\usepackage{amsmath,amssymb}% CUIDADO: Si uso estos packages, $\widetilde{}$ no funciona a veces
\usepackage{graphicx}
%Commented this out as I find it annoying -J
%\usepackage{showkeys} 
\usepackage{amsthm}
\usepackage{verbatim}
% ----------------------------------------------------------------
\vfuzz2pt % Don't report over-full v-boxes if over-edge is small
\hfuzz2pt % Don't report over-full h-boxes if over-edge is small
% THEOREMS -------------------------------------------------------
\newtheorem{thm}{Theorem}[section]
\newtheorem*{theorem*}{Theorem}
\newtheorem*{acknowledgement*}{Acknowledgement}
\newtheorem{cor}[thm]{Corollary}

\newtheorem{lem}[thm]{Lemma}
\newtheorem{prop}[thm]{Proposition}
\theoremstyle{definition}
\newtheorem{defn}[thm]{Definition}
\theoremstyle{remark}
\newtheorem{rem}[thm]{Remark}

\numberwithin{equation}{section}
% MATH -----------------------------------------------------------

\newcommand{\abs}[1]{\left\vert#1\right\vert}
\newcommand{\set}[1]{\left\{#1\right\}}
\newcommand{\Real}{\mathbb R}

\newcommand{\func}[1]{\ensuremath{\mathop{\mathrm{#1}}} }

\newcommand{\dist}[0]{\mathrm{dist}}

\newcommand{\spt}[0]{\func{spt}}

\newcommand{\sing}[0]{\func{sing}}
\newcommand{\reg}[0]{\func{reg}}
\newcommand{\xX}[0]{\mathbf{x}}

\newcommand{\yY}[0]{\mathbf{y}}

\newcommand{\nN}[0]{\mathbf{n}}

\newcommand{\IV}{\mathbf{IV}}
\newcommand{\IM}{\mathcal{IM}}

\newcommand{\M}{\mathcal{M}}
\newcommand{\OO}{\mathbf{0}}
\newcommand{\sstar}{\star\star}
\newcommand{\rstar}[1]{(\star_{#1})}
\newcommand{\rsstar}[1]{(\sstar_{#1})}

\title{Topology of Closed Hypersurfaces of Small Entropy }
\author{Jacob Bernstein}
\address{Department of Mathematics, Johns Hopkins University, 3400 N. Charles Street, Baltimore, MD 21218}
\email{bernstein@math.jhu.edu}
\author{Lu Wang}
\address{Department of Mathematics, University of Wisconsin, 480 Lincoln Drive, Madison, WI 53706}
\email{luwang@math.wisc.edu}
\thanks{The first author was partially supported by the NSF Grant DMS-1307953. The second author was partially supported by the Chapman Fellowship of Imperial College London and NSF Grant DMS-1406240}

\begin{document}
\begin{abstract}
We use a weak mean curvature flow together with a surgery procedure to show that all closed hypersurfaces in $\Real^4$ with entropy less than or equal to that of $\mathbb{S}^2\times \Real$, the round cylinder in $\Real^4$, are diffeomorphic to $\mathbb{S}^3$.
\end{abstract}

\maketitle

\section{Introduction} \label{Intro}
If $\Sigma$ is a {hypersurface}, that is, a smooth properly embedded codimension-one submanifold of $\Real^{n+1}$, then the 
\emph{Gaussian surface area} of $\Sigma$ is
\begin{equation}
F[\Sigma]=\int_{\Sigma}\Phi\, d\mathcal{H}^{n}=(4\pi)^{-\frac{n}{2}}\int_{\Sigma}e^{-\frac{|\xX|^2}{4}} d\mathcal{H}^n,
\end{equation}
where $\mathcal{H}^n$ is $n$-dimensional Hausdorff measure. Following Colding-Minicozzi \cite{CMGenMCF}, define the \emph{entropy} of $\Sigma$ to be
\begin{equation*}
\lambda[\Sigma]=\sup_{(\yY,\rho)\in\Real^{n+1}\times\Real^+}F[\rho\Sigma+\yY].
\end{equation*}
That is, the entropy of $\Sigma$ is the supremum of the Gaussian surface area over all translations and dilations of $\Sigma$. Observe that the entropy of a hyperplane is one. In \cite{BernsteinWang}, we show that, for $2\leq n\leq 6$, the entropy of a closed (i.e. compact and without boundary) hypersurface in $\Real^{n+1}$ is uniquely (modulo translations and dilations) minimized by $\mathbb{S}^n$, the unit sphere centered at the origin. This verifies a conjecture of Colding-Ilmanen-Minicozzi-White \cite[Conjecture 0.9]{CIMW} (cf. \cite{KetoverZhou}). We further show, in \cite[Corollary 1.3]{BernsteinWang2}, that surfaces in $\Real^3$ of small entropy are topologically rigid. That is, if $\Sigma$ is a closed surface in $\Real^3$ and $\lambda[\Sigma]\leq\lambda[\mathbb{S}^1\times\Real]$, then $\Sigma$ is diffeomorphic to $\mathbb{S}^2$. 
 
In this article, we use a weak mean curvature flow (see \cite{ES1,ES2, ES3, ES4} and \cite{CGG}) to obtain  new topological rigidity for closed hypersurfaces in $\Real^4$ of small entropy. This generalizes a result of Colding-Ilmanen-Minicozzi-White \cite{CIMW} for closed self-shrinkers to arbitrary closed hypersurfaces and contrasts with the methods of both \cite{CIMW} and \cite[Corollary 1.3]{BernsteinWang2}, which both use only the classical mean curvature flow.  
\begin{thm}\label{MainTopThm}
If $\Sigma\subset\Real^{4}$ is a closed hypersurface with $\lambda[\Sigma]\leq\lambda[\mathbb{S}^2\times\Real]$, then $\Sigma$ is diffeomorphic to $\mathbb{S}^3$.
\end{thm}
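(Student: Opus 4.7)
The plan is to run a weak mean curvature flow (in the level-set sense of \cite{ES1,CGG}) starting from $\Sigma$ and to continue past singularities by means of a surgery procedure in the spirit of Huisken--Sinestrari. The starting observation is that the entropy is non-increasing under mean curvature flow \cite{CMGenMCF}; hence every tangent flow of the evolution is a self-similarly shrinking hypersurface $\Gamma\subset\Real^{4}$ with $\lambda[\Gamma]\leq\lambda[\mathbb{S}^{2}\times\Real]$. By \cite{CIMW}, combined with the resolution of the Colding--Ilmanen--Minicozzi--White conjecture in \cite{BernsteinWang}, the only such shrinkers are hyperplanes, round $3$-spheres, and round cylinders $\mathbb{S}^{2}\times\Real$. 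Tangent flows that are hyperplanes correspond to regular points by Brakke--White regularity, so at each honest singularity the local model is either a shrinking sphere (``compact round'' type) or a shrinking cylinder (``neck'' type).

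First I would convert this structural classification into quantitative local information: a parabolic neighborhood of each singular point is, on some scale, $C^{k}$-close to either the round $\mathbb{S}^{3}$ shrinker or the round $\mathbb{S}^{2}\times\Real$ shrinker. One may then define a surgery algorithm which, just before the first singular time, cuts along a cross-sectional $\mathbb{S}^{2}$ inside each neck region, caps off the two resulting boundary components by smooth $3$-balls, and simply discards any component that is about to extinguish at a compact round point. These surgeries can be arranged so that the entropy does not increase. After surgery one is left with a (possibly disconnected) smooth closed hypersurface of entropy still bounded by $\lambda[\mathbb{S}^{2}\times\Real]$, and one iterates.

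A compactness argument, using the upper entropy bound together with a lower bound for the Gaussian area lost at each surgery and each extinction, guarantees that only finitely many surgeries can occur before the evolution becomes empty. Topologically, each neck surgery undoes a connected sum along $\mathbb{S}^{2}$, and each compact round extinction deletes an $\mathbb{S}^{3}$ component. Reversing the surgery history, $\Sigma$ is reconstructed from finitely many disjoint $3$-spheres by a finite sequence of connected sums along $\mathbb{S}^{2}$. Since for any closed $3$-manifold $M$ one has $M\sharp\mathbb{S}^{3}\cong M$, this forces $\Sigma$ itself to be diffeomorphic to $\mathbb{S}^{3}$.

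The principal obstacle, in my estimation, is carrying out the surgery procedure at the level of a weak flow in the absence of any mean convexity or two-convexity hypothesis on $\Sigma$. In the Huisken--Sinestrari and Haslhofer--Kleiner frameworks, two-convexity supplies the \emph{a priori} curvature pinching needed to justify surgery; here one has only the entropy bound, which must be parlayed into sufficiently strong local regularity and graphical decomposition theorems. In particular, one must control the geometry near points that are close to, but not exactly modeled by, a neck, and must verify that a neck surgery can be inserted without creating new singularities of higher entropy or corrupting the global entropy bound for subsequent iterations. Reconciling the weak-flow framework with the essentially smooth nature of the surgery step is where I expect the most substantial technical work to lie.
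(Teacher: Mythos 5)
Your proposal rests on a classification that is not available: you assert that the only self-shrinkers in $\Real^4$ with entropy at most $\lambda[\mathbb{S}^2\times\Real]$ are hyperplanes, round $3$-spheres, and round cylinders $\mathbb{S}^2\times\Real$. Neither \cite{CIMW} nor \cite{BernsteinWang} proves anything of the sort. What the paper actually establishes (Proposition \ref{CondTotRegProp}) is far weaker: under the hypotheses $\rstar{3,\lambda_2}$ and $\rsstar{3,\lambda_2}$ (verified via Marques--Neves' lower density bound for minimal cones in $\Real^4$ and \cite[Corollary 1.2]{BernsteinWang2}), every low-entropy self-shrinking measure is a smooth hypersurface which is either \emph{closed and diffeomorphic to} $\mathbb{S}^3$ (not round) or \emph{asymptotically conical} (not a round cylinder). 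Consequently your assertion that a neighborhood of each singular point is $C^k$-close to the round cylinder or the round sphere is false, and the quantitative surgery algorithm you build on it has no foundation. In particular, there is no reason the ``neck'' cross-section is a round $\mathbb{S}^2$; showing that the link of the asymptotic cone of such a shrinker is even \emph{diffeomorphic} to $\mathbb{S}^2$ is the point of Theorem \ref{MainACSThm} and Corollary \ref{ACSDim3Cor}, which are precisely the new ingredients the paper supplies and which are absent from your outline.

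The second gap is the surgery mechanism itself, and you half-anticipate it in your final paragraph: a Huisken--Sinestrari-type flow-with-surgery in this setting has no \emph{a priori} curvature pinching, no canonical neck structure, and no way to control entropy loss through the surgery, so the iteration you describe cannot be carried out as a geometric flow. The paper never performs a geometric surgery at all. Instead it proves a purely \emph{topological} decomposition theorem (Theorem \ref{CondSurgThm}): it isolates singularities of the Ilmanen Brakke/level-set flow in space-time, shows each is either of compact type (tangent flow is a closed shrinker, diffeomorphic to $\mathbb{S}^n$) or of non-compact type (tangent flow is asymptotically conical), and records how the smooth hypersurfaces $\spt\mu_{t}$ just before and just after a singular time differ: only by removing and reinserting finitely many pieces, each of which is a truncated asymptotically conical shrinker. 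The reassembly then uses Corollary \ref{ACSDim3Cor} (each piece is a $3$-ball, with $\mathbb{S}^2$ boundary), Alexander's theorem, and the connectedness of $\mathrm{Diff}_+(\mathbb{S}^2)$ (Munkres/Smale/Cerf) to conclude that each step preserves the diffeomorphism type. So while your intuition that the singular pieces should topologically be $3$-balls glued along $2$-spheres is the right one, the justification must come from the topology of asymptotically conical shrinkers of low entropy, not from a claimed rigidity of shrinkers that is an open problem in this dimension.
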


One of the key ingredients in the proof of Theorem \ref{MainTopThm} is a refinement of \cite[Theorem 0.1]{BernsteinWang2} about the topology of asymptotically conical self-shrinkers of small entropy. Recall, a hypersurface $\Sigma$ is said to be \emph{asymptotically conical} if it is smoothly asymptotic to a regular cone; i.e., $\lim_{\rho\to 0} \rho\Sigma= \mathcal{C} (\Sigma)$ in $C^{\infty}_{loc}(\Real^{n+1}\setminus\set{\OO})$ for $\mathcal{C} (\Sigma)$ a regular cone. A \emph{self-shrinker}, $\Sigma$, is a hypersurface that satisfies
\begin{equation}\label{SSEqn}
\mathbf{H}_\Sigma+\frac{\xX^\perp}{2}=\mathbf{0},
\end{equation}
where $\mathbf{H}_\Sigma=-H_{\Sigma} \nN_\Sigma=\Delta_\Sigma \xX$ is the mean curvature vector of $\Sigma$ and $\xX^\perp$ is the normal component of the position vector.   Let us denote the set of self-shrinkers in $\Real^{n+1}$ by $\mathcal{S}_n$ and the set of asymptotically conical self-shrinkers by $\mathcal{ACS}_n$. Self-shrinkers generate solutions to the mean curvature flow that move self-similarly by scaling. That is, if $\Sigma\in\mathcal{S}_n$, then
\begin{equation*}
\set{\Sigma_t}_{t\in(-\infty,0)}=\set{\sqrt{-t}\, \Sigma}_{t\in(-\infty,0)}
\end{equation*}
moves by mean curvature. Important examples are the maximally symmetric self-shrinking cylinders with $k$-dimensional spine,
\begin{equation*}
\mathbb{S}^{n-k}_{*}\times\Real^k=\set{(\xX,\yY)\in\Real^{n-k+1}\times\Real^k=\Real^{n+1}: |\xX|^2=2(n-k)},
\end{equation*}
where $0\leq k\leq n$. As $\mathbb{S}^{n-k}_{*}\times\Real^k$ are self-shrinkers, their Gaussian surface area and entropy agree (cf. \cite[Lemma 7.20]{CMGenMCF}).  That is,
\begin{equation*}
\lambda_n=\lambda [\mathbb{S}^n]=F[\mathbb{S}_*^n]=F[\mathbb{S}_*^n\times\Real^l]=\lambda[\mathbb{S}^n\times\Real^l].
\end{equation*}
Hence, a computation of Stone \cite{Stone}, gives that
\begin{equation*}
2>\lambda_1>\frac{3}{2}>\lambda_2>\ldots>\lambda_n>\ldots\to\sqrt{2}.
\end{equation*}

\begin{thm}
\label{MainACSThm}
Let $\Sigma\in \mathcal{ACS}_n$ for $n\geq 2$. If $\lambda[\Sigma]\leq\lambda_{n-1}$, then $\Sigma$ is contractible and $\mathcal{L} (\Sigma)$, the link of the asymptotic cone $\mathcal{C} (\Sigma)$, is a homology $(n-1)$-sphere. 
\end{thm}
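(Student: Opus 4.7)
The plan has two stages: first, reduce the link condition to contractibility of $\Sigma$; then prove contractibility via a weak mean curvature flow that connects $\Sigma$ to a matching self-expander.

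For the reduction, choose $R$ large enough that $\Sigma \setminus B_R$ is a smooth normal graph over the truncated cone $\mathcal{C}(\Sigma) \setminus B_R$; then $\Sigma$ deformation retracts onto the compact manifold-with-boundary $\Sigma \cap \bar{B}_R$, whose boundary is diffeomorphic to $\mathcal{L}(\Sigma)$. If $\Sigma$ is contractible, so is this truncation, and Lefschetz duality together with the long exact sequence of the pair $(\Sigma \cap \bar{B}_R,\, \mathcal{L}(\Sigma))$ immediately yields $H_\ast(\mathcal{L}(\Sigma)) \cong H_\ast(\mathbb{S}^{n-1})$. So it suffices to prove that $\Sigma$ is contractible.

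For the second stage, observe that by lower semi-continuity of entropy under the rescaling $\rho\Sigma \to \mathcal{C}(\Sigma)$ as $\rho \to 0$, one has $\lambda[\mathcal{C}(\Sigma)] \leq \lambda[\Sigma] \leq \lambda_{n-1}$. Using the existence theory for smooth asymptotically conical self-expanders out of low-entropy cones (from the authors' prior work on expanders), pick a smooth self-expander $\mathcal{E}^\pm$ in each of the two components of $\Real^{n+1} \setminus \mathcal{C}(\Sigma)$. Concatenating the shrinking flow $\{\sqrt{-t}\,\Sigma\}_{t<0}$ with the cone slice $\mathcal{C}(\Sigma)$ at $t=0$ and the expanding flows $\{\sqrt{t}\,\mathcal{E}^\pm\}_{t>0}$ yields an eternal weak MCF on $\Real$ that is smooth away from the spacetime origin, where the tangent flow is the contractible cone $\mathcal{C}(\Sigma)$ itself. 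The bound $\lambda[\Sigma] \leq \lambda_{n-1}$ excludes any further singularities, since the only self-shrinker tangent models with entropy at most $\lambda_{n-1}$ are hyperplanes, round spheres, the cylinder $\mathbb{S}^{n-1}\times\Real$, and the conical one itself, none of which can arise in this flow away from the origin.

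Using this flow as a cobordism, each closed region $\bar{\Omega}^\pm$ bounded by $\Sigma$ in $\Real^{n+1}$ is diffeomorphic to the corresponding expander-bounded region $\bar{E}^\pm$. The main technical obstacle is then showing the expander regions $\bar{E}^\pm$ are contractible. I would approach this through a mean-convex sweep-out: the self-expander equation $\mathbf{H}_{\mathcal{E}^\pm} - \xX^\perp/2 = \mathbf{0}$ arranges the scaling family $\{\sqrt{t}\,\mathcal{E}^\pm\}_{t>0}$ into a monotone outward foliation of $\bar{E}^\pm$, and a Morse-theoretic argument on a proper function such as $|\xX|^2$ restricted to $\mathcal{E}^\pm$, whose Hessian is controlled via the expander equation and the entropy bound, should give contractibility of $\mathcal{E}^\pm$ and hence of $\bar{E}^\pm$. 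Granted this, a Mayer--Vietoris argument for $\Real^{n+1} = \bar{\Omega}^+ \cup \bar{\Omega}^-$ with $\Sigma = \bar{\Omega}^+ \cap \bar{\Omega}^-$ forces $\tilde{H}_\ast(\Sigma) = 0$; simple connectivity for $n \geq 2$ follows from a standard transversality / van Kampen argument, and Whitehead's theorem concludes that $\Sigma$ is contractible.
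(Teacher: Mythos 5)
Your proof takes a genuinely different route from the paper, and unfortunately several of its key steps are either unavailable in the cited literature or simply fail. The paper proves the theorem by refining \cite[Theorem 0.1]{BernsteinWang2}: it works with the mean curvature flows $\Gamma_t^\pm$, $t\in[-1,0]$, of small normal perturbations of $\Sigma$, which converge as $t\to 0^-$ to hypersurfaces $\Gamma^\pm$ that the radial map $\Pi(p)=\xX(p)/|\xX(p)|$ sends diffeomorphically onto the two components $\Omega^\pm$ of $\mathbb{S}^n\setminus\mathcal{L}(\Sigma)$. The new technical input (Lemma \ref{StupidLem} and the displacement estimate \eqref{DistortionEst} in Proposition \ref{MainACSProp}) is an iterated nearest-point-projection diffeomorphism $\Psi:\Gamma^+\to\Gamma^-$ satisfying $|\xX(p)-\xX(\Psi(p))|\leq K(1+|\xX(p)|)^{-1}$; this decay is exactly what forces the induced self-map of $\mathbb{S}^n$ to extend continuously to the identity on $\mathcal{L}(\Sigma)$, producing a homeomorphic involution $\phi$ of $\mathbb{S}^n$ fixing the link and swapping $\Omega^\pm$. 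The homology of $\bar{U}^\pm$ and of $\mathcal{L}(\Sigma)$ then falls out of Mayer--Vietoris \emph{on $\mathbb{S}^n$} run against $\phi$, simple connectivity of $\bar{U}^\pm$ follows from the folding maps $F^\pm$, and $\Sigma\cong\Omega^\pm$ gives the contractibility. No self-expanders appear at all.

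The gaps in your version are concrete. (i) Existence of smooth asymptotically conical self-expanders emanating from an arbitrary low-entropy cone is not in the literature cited by this paper and is itself a substantial result; "the authors' prior work on expanders'' did not exist at the time. (ii) The assertion that the bound $\lambda[\Sigma]\leq\lambda_{n-1}$ rules out further singularities and that the only possible tangent shrinkers are hyperplanes, spheres, cylinders, and the conical one is exactly the kind of classification that the paper's Propositions \ref{CondRegProp}--\ref{CondTotRegProp} derive only under the extra hypotheses \eqref{Assump1} and \eqref{Assump2} -- and Theorem \ref{MainACSThm} is stated for all $n\geq 2$ without those hypotheses. (iii) Even granting a smooth concatenated flow away from the spacetime origin, it does \emph{not} follow that $\bar\Omega^\pm$ is diffeomorphic to $\bar E^\pm$: the flow passes through the cone slice at $t=0$, which has a vertex, and the enclosed $(n+1)$-dimensional region changes topology precisely there; scaling gives diffeomorphisms only within each half $t<0$ or $t>0$, never across. (iv) The Morse-theoretic contractibility of expander regions via $|\xX|^2$ is unsubstantiated -- the Hessian of $|\xX|^2$ on a self-expander is not sign-controlled by the expander equation and the entropy bound. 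Your opening reduction (contractibility of $\Sigma$ implies $\mathcal{L}(\Sigma)$ is a homology $(n-1)$-sphere by Lefschetz duality and the long exact sequence of the pair) is correct and is a clean alternative to the paper's direct Mayer--Vietoris computation, but the core of your construction needs to be replaced.
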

\begin{rem}
	We always consider homology with integer coefficients.
\end{rem}
For $n=3$, the classification of surfaces and Alexander's theorem \cite{Alexander} gives
\begin{cor}
\label{ACSDim3Cor}
Let $\Sigma\in \mathcal{ACS}_3$. If $\lambda[\Sigma]\leq\lambda_{2}$, then $\Sigma$ is diffeomorphic to $\Real^3$.
\end{cor}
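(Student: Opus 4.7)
The plan is to feed Theorem~\ref{MainACSThm} (with $n=3$) into two-dimensional surface classification and three-dimensional topology. First, I would invoke Theorem~\ref{MainACSThm} to obtain that $\Sigma$ is contractible and that the link $\mathcal{L}(\Sigma)$ is a closed surface with the integral homology of $\mathbb{S}^2$. The classification of closed surfaces then forces $\mathcal{L}(\Sigma)\cong\mathbb{S}^2$, since $\mathbb{S}^2$ is the unique closed surface with vanishing first homology. Smooth asymptoticity to the regular cone $\mathcal{C}(\Sigma)$ over $\mathcal{L}(\Sigma)$ then guarantees that, for all sufficiently large $R$, the subset $\Sigma\setminus\mathrm{int}\,B_R$ is diffeomorphic to $\mathbb{S}^2\times[R,\infty)$, so $K:=\Sigma\cap\bar B_R$ is a compact $3$-manifold with $\partial K\cong\mathbb{S}^2$ and $\Sigma=K\cup_{\mathbb{S}^2}(\mathbb{S}^2\times[R,\infty))$.

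Next, a standard Mayer--Vietoris and Van Kampen calculation applied to this decomposition, using the contractibility of $\Sigma$ together with the homotopy equivalence $\mathbb{S}^2\times[R,\infty)\simeq\mathbb{S}^2$, would show that $K$ is simply connected and has trivial reduced integral homology. Whitehead's theorem then promotes this to contractibility of $K$, so the problem reduces to identifying a compact contractible $3$-manifold with $\mathbb{S}^2$ boundary as the closed $3$-ball.

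Finally, I would cap $K$ off along $\partial K\cong\mathbb{S}^2$ with a $3$-ball to produce a closed simply connected $3$-manifold with the homology of $\mathbb{S}^3$, hence a homotopy $3$-sphere; the (smooth) Poincar\'e conjecture identifies this capping with $\mathbb{S}^3$, and Alexander's theorem \cite{Alexander}---which asserts that every smoothly embedded $2$-sphere in $\mathbb{S}^3$ bounds a $3$-ball on each side---then exhibits $K$ as $B^3$. Assembling the decomposition, $\Sigma\cong B^3\cup_{\mathbb{S}^2}(\mathbb{S}^2\times[0,\infty))$ is diffeomorphic to $\Real^3$. The main obstacle is this last step: identifying a compact contractible $3$-manifold with $\mathbb{S}^2$ boundary as the $3$-ball requires genuine three-dimensional topology, and it is the combination of Alexander's theorem with Perelman's resolution of the Poincar\'e conjecture that rules out any exotic alternative and closes the argument.
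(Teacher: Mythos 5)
Your proposal is correct, but it takes a genuinely different and substantially heavier route than the paper. The paper's proof is very short because it exploits a structural fact buried in the construction of the involution in Proposition~\ref{MainACSProp} (coming from \cite[Theorem 0.1]{BernsteinWang2}): $\Sigma$ itself is diffeomorphic to one of the two open components $\Omega^\pm$ of $\mathbb{S}^3\setminus\mathcal{L}(\Sigma)$, via the composition of the short-time mean curvature flow starting from a small normal perturbation of $\Sigma$ with the radial projection $\Pi(p)=\xX(p)/|\xX(p)|$. Given that, the proof is: $\mathcal{L}(\Sigma)$ is a homology $2$-sphere by Theorem~\ref{MainACSThm}, hence diffeomorphic to $\mathbb{S}^2$ by the classification of surfaces, hence by Alexander's theorem both components of $\mathbb{S}^3\setminus\mathcal{L}(\Sigma)$ are open $3$-balls, so $\Sigma\cong\Omega^\pm\cong\Real^3$. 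No three-dimensional input beyond Alexander (1924) is required.

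You instead reconstruct $\Sigma$ from Theorem~\ref{MainACSThm} alone, writing $\Sigma=K\cup_{\mathbb{S}^2}\bigl(\mathbb{S}^2\times[R,\infty)\bigr)$ with $K$ compact, and use contractibility of $\Sigma$ to deduce contractibility of $K$. To identify $K$ with $\bar B^3$ you then cap off and invoke Perelman's resolution of the Poincar\'e conjecture (indeed, ``compact contractible $3$-manifold with $\mathbb{S}^2$ boundary is $B^3$'' is equivalent to Poincar\'e, so there is no elementary detour). The calculation is correct, and the appeal to Smale's theorem $\mathrm{Diff}_+(\mathbb{S}^2)\simeq \mathrm{SO}(3)$ implicitly justifies the final gluing $B^3\cup_{\mathbb{S}^2}(\mathbb{S}^2\times[0,\infty))\cong\Real^3$, but the argument brings in machinery the paper deliberately avoids. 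What the paper's route buys is elementarity; what yours buys is that it needs only the \emph{statement} of Theorem~\ref{MainACSThm}, whereas the paper's proof secretly relies on extra geometric content from \cite{BernsteinWang2} that does not appear in the corollary's hypotheses.
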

To prove Theorem \ref{MainTopThm} we first establish a topological decomposition, i.e., Theorem \ref{CondSurgThm}, constructed from the weak mean curvature flow associated to $\Sigma$. Together with Corollary \ref{ACSDim3Cor} this allows one to perform a surgery procedure which immediately gives the result.  Both these steps require $n=3$.  For $n\geq 4$, one can use Theorem \ref{MainACSThm} and this surgery procedure to show a (strictly weaker) extension of Theorem \ref{MainTopThm} valid in any dimension where the two hypotheses below are satisfied. These hypotheses ensure the existence of  topological decomposition.  Specifically, they ensure that if the entropy of an initial hypersurface is small enough, then tangent flows at all singularities are modeled by self-shrinkers that are either closed or asymptotically conical. 

In order to state these hypotheses, first let $\mathcal{S}_n^*$ denote the set of non-flat elements of $\mathcal{S}_n$ and, for any $\Lambda>0$, let 
$$
\mathcal{S}_n(\Lambda)=\set{\Sigma\in \mathcal{S}_n: \lambda[\Sigma]<\Lambda} \mbox{ and } \mathcal{S}_n^*(\Lambda)=\mathcal{S}^*_n \cap \mathcal{S}_n(\Lambda).
$$
Next, let $\mathcal{RMC}_n$ denote the space of \emph{regular minimal cones} in $\Real^{n+1}$, that is $\mathcal{C}\in \mathcal{RMC}_n$ if and only if it is a proper subset of $\Real^{n+1}$ and $\mathcal{C}\backslash\set{\OO}$ is a hypersurface in $\Real^{n+1}\backslash\set{\OO}$ that is invariant under dilation about $\OO$ and with vanishing mean curvature. Let $\mathcal{RMC}_n^*$ denote the set of non-flat elements of $\mathcal{RMC}_n$ -- i.e., cones whose second fundamental forms do not identically vanish. For any $\Lambda>0$, let 
$$
\mathcal{RMC}_n(\Lambda)=\set{\mathcal{C}\in \mathcal{RMC}_n: \lambda[\mathcal{C}]< \Lambda} \mbox{ and } \mathcal{RMC}_n^*(\Lambda)=\mathcal{RMC}^*_n \cap \mathcal{RMC}_n(\Lambda).
$$

Let us now fix a dimension $n\geq 3$ and a value $\Lambda>1$.  
The first hypothesis is 
\begin{equation} \label{Assump1}
\mbox{For all $3\leq k\leq n$, }\mathcal{RMC}_{k}^*(\Lambda)=\emptyset \tag{$\star_{n,\Lambda}$}.
\end{equation}
Observe that all regular minimal cones in $\mathbb{R}^2$ consist of unions of rays and so $\mathcal{RMC}^*_1=\emptyset$. Likewise, as great circles are the only geodesics in $\mathbb{S}^2$, $\mathcal{RMC}_2^*=\emptyset$.  The second hypothesis is
\begin{equation} \label{Assump2}
\mathcal{S}_{n-1}^*(\Lambda) =\emptyset. \tag{$\sstar_{n,\Lambda}$}
\end{equation}
Obviously this holds only if $\Lambda\leq\lambda_{n-1}$.
We then show the following conditional result:
\begin{thm}\label{MainCondThm}
Fix $n\geq 4$ and $\Lambda\in (\lambda_{n}, \lambda_{n-1}]$. If \eqref{Assump1} and \eqref{Assump2} both hold and $\Sigma$ is a closed hypersurface in $\Real^{n+1}$ with $\lambda[\Sigma]\leq\Lambda$, then $\Sigma$ is a homology $n$-sphere.
\end{thm}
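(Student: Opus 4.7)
My plan is to follow the strategy outlined by the authors for Theorem~\ref{MainTopThm} at $n=3$, substituting the weaker homological conclusion of Theorem~\ref{MainACSThm} for Corollary~\ref{ACSDim3Cor} and using the hypotheses $(\star_{n,\Lambda})$ and $(\sstar_{n,\Lambda})$ in place of the dimension-specific classification of tangent flows available when $n=3$. The aim is to realize $\Sigma$ as the output of a finite sequence of surgeries on a disjoint union of homology $n$-spheres, and to check that each such surgery preserves being a homology $n$-sphere.

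First, I would run the level-set flow $\{\Sigma_t\}_{t\geq 0}$ of $\Sigma$ and analyze the tangent flow at each singular spacetime point. By Huisken monotonicity, the entropy is nonincreasing along the flow, so every tangent flow is modeled on a self-shrinker $\tilde\Sigma\in\mathcal{S}_n$ with $\lambda[\tilde\Sigma]\leq\Lambda$. Hypothesis $(\sstar_{n,\Lambda})$ rules out any tangent flow that splits off a line: if $\tilde\Sigma=\tilde\Sigma'\times\R$ with $\tilde\Sigma'\in\mathcal{S}_{n-1}$ non-flat, then $\lambda[\tilde\Sigma']=\lambda[\tilde\Sigma]\leq\Lambda$ contradicts $\mathcal{S}_{n-1}^{*}(\Lambda)=\emptyset$, so $\tilde\Sigma'$ must be a hyperplane and the point is in fact regular. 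Hypothesis $(\star_{n,\Lambda})$ controls the asymptotic cones of the noncompact tangent flows: a singular stratum of dimension $k$ with $3\leq k\leq n$ in such a tangent cone would yield a non-flat regular minimal cone in $\R^{k+1}$ of entropy at most $\Lambda$, contradicting $\mathcal{RMC}_k^{*}(\Lambda)=\emptyset$. Therefore every noncompact tangent flow is smoothly asymptotic to a regular cone and lies in $\mathcal{ACS}_n$.

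With this tangent-flow classification in hand, I would invoke the topological decomposition coming from the weak flow (the analog in the present setting of Theorem~\ref{CondSurgThm}) to realize $\Sigma$ as built, via a finite sequence of surgeries at the singular times, from a disjoint union of closed hypersurfaces. The surgeries are of two types: deletion of a closed component diffeomorphic to a compact tangent-flow model, and neck-surgery replacing a large compact piece of an AC tangent-flow model $\tilde\Sigma$ bounded by a copy of $\mathcal{L}(\tilde\Sigma)$ by two cap regions bounded by $\mathcal{L}(\tilde\Sigma)$. Theorem~\ref{MainACSThm} asserts that each AC model $\tilde\Sigma$ is contractible with $\mathcal{L}(\tilde\Sigma)$ a homology $(n-1)$-sphere, so both the removed piece and each cap are homology $n$-balls; a direct Mayer-Vietoris computation then shows that each neck-surgery preserves the homology-sphere property. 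The compact tangent-flow models appearing as deletion surgeries themselves satisfy the hypotheses of Theorem~\ref{MainCondThm}, so an induction (on, say, the number of singular times of the associated flow, using the strict entropy drop at a nonshrinker singular time) reduces them to homology $n$-spheres as well. Running this induction backward from the empty final state of the flow to $\Sigma_0=\Sigma$ then yields the conclusion.

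The main obstacle I foresee is making the induction on compact tangent-flow models rigorous: the entropy $\lambda[\tilde\Sigma]$ need not drop strictly below $\lambda[\Sigma]$ (for instance when $\Sigma$ is itself a closed self-shrinker), so one must either track an auxiliary complexity that does drop strictly or dispatch the self-shrinking case by a separate rigidity argument. A secondary challenge is to prove the analog of Theorem~\ref{CondSurgThm} in this general-$n$ setting from $(\star_{n,\Lambda})$ and $(\sstar_{n,\Lambda})$ alone, which will require the tangent-flow classification above together with the full machinery of Brakke and level-set regularity for the weak mean curvature flow.
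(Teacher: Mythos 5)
Your overall plan — classify tangent flows using $(\star_{n,\Lambda})$ and $(\sstar_{n,\Lambda})$, extract a surgery decomposition from the weak flow, and run Mayer–Vietoris using the homology-ball conclusion of Theorem~\ref{MainACSThm} — is the same skeleton as the paper's argument, and your dispatch of the closed-self-shrinker edge case by a separate rigidity step is exactly how the paper handles it. However, two points in your write-up deviate from what is actually needed. First, you describe "the analog of Theorem~\ref{CondSurgThm} in this general-$n$ setting" as something still to be proved; in fact Theorem~\ref{CondSurgThm} is stated and proved in the paper for every $n\geq 3$ and $\Lambda\in(\lambda_n,\lambda_{n-1}]$ under precisely $(\star_{n,\Lambda})$ and $(\sstar_{n,\Lambda})$, so it can be invoked directly. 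Second, and more substantively, the induction on compact tangent-flow models that you identify as "the main obstacle" is unnecessary: Proposition~\ref{CondTotRegProp} (via \cite[Theorem~0.7]{CIMW}) already shows every closed self-shrinker with entropy below $\lambda_{n-1}$ is diffeomorphic to $\mathbb{S}^n$, and Theorem~\ref{CondSurgThm} is formulated so that compact singularities are absorbed — the output $\Sigma^N$ is diffeomorphic to $\mathbb{S}^n$ outright, and only non-compact (asymptotically conical) tangent flows produce the surgery pieces $U_j^i, V_j^i$. Consequently there is no need to track an auxiliary complexity or worry about the entropy failing to drop at singular times; the iteration is purely the Mayer–Vietoris computation, applied at each step using Lemma~\ref{HomBallLem} to see that the removed pieces $\bar V$ are homology $n$-balls and Theorem~\ref{MainACSThm} to see that the glued-in pieces $\bar U$ are contractible. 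Filling in that two-stage Mayer–Vietoris calculation at each surgery step (rather than the single one your sketch suggests) is the remaining content of the proof.
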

\begin{rem} \label{LowBndLam}
If \eqref{Assump1} and \eqref{Assump2} hold for $\Lambda\leq\lambda_{n}$, then it follows from Huisken's monotonicity formula and the results of \cite{BernsteinWang} and \cite{CIMW} that there does \emph{not} exist a closed hypersurface $\Sigma$ so that $\lambda[\Sigma]\leq\Lambda$ unless $\Lambda=\lambda_n$ and $\Sigma$ is a round sphere. Thus, we require $\Lambda>\lambda_n$ in order to make Theorem \ref{MainCondThm} non-trivial. 
\end{rem}

For general $n$ and $\Lambda\in (\lambda_n, \lambda_{n-1}]$, neither the validity of \eqref{Assump1} nor that of \eqref{Assump2} is known. However, both can be established for $n=3$ and $\Lambda=\lambda_2$. First, as part of their proof of the Willmore conjecture, Marques-Neves gave a lower bound on the density of non-trivial regular minimal cones in $\Real^4$. In particular, it follows from \cite[Theorem B]{MarquesNeves} that if $\mathcal{C}\in\mathcal{RMC}_3^*$, then $\lambda[\mathcal{C}] >\lambda_2$ and so $\rstar{3,\lambda_2}$ holds. Furthermore, it follows from \cite[Corollary 1.2]{BernsteinWang2} that $\mathcal{S}_2^*(\lambda_2)=\emptyset$ and so $\rsstar{3,\lambda_2}$ holds.

For $n\geq 4$, some partial results suggest that \eqref{Assump1}  and \eqref{Assump2} hold for $\Lambda=\lambda_{n-1}$. For instance, Ilmanen-White \cite[Theorem 1*]{Ilmanen-White}, have shown that if $\mathcal{C}\in \mathcal{RMC}_n^*$ and is area-minimizing and topologically non-trivial, then $\lambda[\mathcal{C}]\geq \lambda_{n-1}$. Additionally, \cite[Theorem 0.1]{CIMW} says that the self-shrinking sphere has the lowest entropy among all compact self-shrinkers and \cite[Conjecture 0.10]{CIMW} posits that $\rsstar{n,\lambda_{n-1}}$ holds for $n\leq 7$.
It is important to note that there exist many topologically trivial elements of $\mathcal{RMC}_n^*$. Indeed, the work of Hsiang \cite{Hsiang1, Hsiang2} and Hsiang-Sterling \cite{HsiangSterling}, shows that there exist topologically trivial elements of $\mathcal{RMC}_n^*$ for $n=5,7$ and for all even $n\geq 4$.   

The paper is organized as follows. In Section \ref{NotationSec}, we introduce notation and recall some basic facts about the mean curvature flow. In Section \ref{RegularitySec}, we show regularity of self-shrinking measures of low entropy. In Section \ref{SingularitySec}, we study the structure of the singular set for weak mean curvature flows of small entropy. Importantly, we give a topological decomposition, Theorem \ref{CondSurgThm}, of the regular part of the flow which is the basis of the surgery procedure. In Section \ref{SharpeningSec}, we prove Theorem \ref{MainACSThm} and Corollary \ref{ACSDim3Cor}.  Finally, in Section \ref{SurgerySec}, we carry out the surgery procedure and prove Theorems \ref{MainTopThm} and \ref{MainCondThm}.

\section{Notation and Background} \label{NotationSec}
In this section, we fix notation for the rest of the paper and recall some background on mean curvature flow.  Experts should feel free to consult this section only as needed.

\subsection{Singular hypersurfaces}
We will use results from \cite{Ilmanen1} on weak mean curvature flows. For this reason, we follow the notation of \cite{Ilmanen1} as closely as possible. 

Denote by
\begin{itemize}
 \item $\M(\Real^{n+1})=\set{\mu: \mu\mbox{ is a Radon measure on $\Real^{n+1}$}}$ (see \cite[Section 4]{Simon});
 \item $\IM_k(\Real^{n+1})=\set{\mu: \mu\mbox{ is an integer $k$-rectifiable Radon measure on $\Real^{n+1}$}}$ (see \cite[Section 1]{Ilmanen1});
 \item $\IV_k(\Real^{n+1})=\set{V: V\mbox{ is an integer rectifiable $k$-varifold on $\Real^{n+1}$}}$ (see \cite[Section 1]{Ilmanen1} or \cite[Chapter 8]{Simon}).
\end{itemize}
The space $\M(\Real^{n+1})$ is given the weak* topology. That is,
$$
 \mu_i\to\mu\iff\int f\, d\mu_i\to\int f \, d\mu\mbox{  for all $f\in C^0_c(\Real^{n+1})$}.
$$
And the topology on $\IM_k(\Real^{n+1})$ is the subspace topology induced by the natural inclusion into $\M(\Real^{n+1})$. For the details of the topologies considered on $ \IV_k(\Real^{n+1})$, we refer to \cite[Section 1]{Ilmanen1} or \cite[Chapter 8]{Simon}. There are natural bijective maps
$$
V: \IM_k(\Real^{n+1})\to \IV_k(\Real^{n+1}) \mbox{ and } \mu:\IV_k(\Real^{n+1})\to\IM_k(\Real^{n+1}).
$$
The second map is continuous, but the first is not. Henceforth, write $V(\mu)=V_\mu$ and $\mu(V)=\mu_V$. 

If $\Sigma\subset\Real^{n+1}$ is a $k$-dimensional smooth properly embedded submanifold, we denote by $\mu_\Sigma=\mathcal{H}^k\lfloor\Sigma\in\IM_k(\Real^{n+1})$. Given $(\yY,\rho)\in\Real^{n+1}\times\Real^+$ and $\mu\in\IM_k(\Real^{n+1})$, we define the rescaled measure $\mu^{\yY,\rho}\in\IM_k(\Real^{n+1})$ by
$$
 \mu^{\yY,\rho}(\Omega)=\rho^{k}\mu\left(\rho^{-1}\Omega+\yY\right).
$$
This is defined so that if $\Sigma$ is a $k$-dimensional smooth properly embedded submanifold, then 
$$
\mu^{\yY,\rho}_\Sigma=\mu_{\rho (\Sigma-\yY)}. 
$$
One of the defining properties of $\mu\in \IM_k(\Real^{n+1})$ is that for $\mu$-a.e. $\xX\in\Real^{n+1}$, there is an integer 
$\theta_\mu(\xX)$ so that
$$
 \lim_{\rho\to \infty}\mu^{\xX,\rho}=\theta_\mu(\xX)\mu_{P},
$$
where $P$ is a $k$-dimensional plane through the origin. When such $P$ exists, we denote it by $T_{\xX} \mu$ the \emph{approximate tangent plane at $\xX$}. The value $\theta_\mu(\xX)$ is the \emph{multiplicity of $\mu$ at $\xX$} and by definition, $\theta_\mu(\xX)\in\mathbb{N}$ for $\mu$-a.e. $\xX$. Notice that if $\mu=\mu_{\Sigma}$, then $T_{\xX}\mu=T_{\xX}\Sigma$ and $\theta_\mu(\xX)=1$. Given a $\mu\in \IM_n(\Real^{n+1})$, set
$$
\reg(\spt(\mu))=\set{\xX\in\spt(\mu): \exists\rho>0 \mbox{ s.t. $B_\rho(\xX)\cap\spt(\mu)$ is a hypersurface}},
$$
and $\sing(\spt(\mu))=\spt(\mu)\setminus\reg(\spt(\mu))$. Here $B_\rho(\xX)$ is the open ball in $\Real^{n+1}$ centered at $\xX$ with radius $\rho$. Likewise,
$$
\reg(\mu)=\set{\xX\in \reg(\spt(\mu)): \theta_\mu(\xX)=1}  \mbox{ and }  \sing(\mu)=\spt(\mu)\setminus\reg(\mu).
$$

For $\mu\in\IM_n(\Real^{n+1})$, we extend the definitions of $F$ and $\lambda$ in the obvious manner, namely,
$$
F[\mu]=F[V_\mu]=\int \Phi\, d\mu  \mbox{ and }  \lambda[\mu]=\lambda[V_\mu]=\sup_{(\yY,\rho)\in\Real^{n+1}\times\Real^+} F[\mu^{\yY,\rho}].
$$

\subsection{Gaussian densities and tangent flows}\label{Brakkef}
Historically, the first weak mean curvature flow was the measure-theoretic flow introduced by Brakke \cite{B}. This flow is called a \emph{Brakke flow}. Brakke's original definition considered the flow of varifolds. We use the (slightly stronger) notion introduced by Ilmanen \cite[Definition 6.3]{Ilmanen1}. For our purposes, the Brakke flow has two important roles. The first is the fact that Huisken's monotonicity formula \cite{Huisken} holds also for Brakke flows; see \cite[Lemma 7]{Ilmanen2}. The second is the powerful regularity theory of Brakke \cite{B} for such flows. In particular, we will often refer to White's version of Brakke's local regularity theorem  \cite{WhiteReg}.  We emphasize that White's argument is valid only for a special class of Brakke flows, but that all Brakke flows considered in this paper are within this class.

A consequence of Huisken's monotonicity formula is that if a Brakke flow $\mathcal{K}=\set{\mu_t}_{t\geq t_0}$ has bounded area ratios, then $\mathcal{K}$ has a well-defined \emph{Gaussian density} at every point $(\yY,s)\in\Real^{n+1}\times (t_0,\infty)$ given by
$$
\Theta_{(\yY,s)}(\mathcal{K})=\lim_{t\to s^-}\int\Phi_{(\yY,s)}(\xX,t)\, d\mu_t(\xX),
$$
where 
$$
\Phi_{(\yY,s)} (\xX,t)=(4\pi)^{-\frac{n}{2}} e^{\frac{|\xX-\yY|^2}{4(t-s)}}.
$$
Furthermore, the Gaussian density is upper semi-continuous.

Combining the compactness of Brakke flows (cf.  \cite[7.1]{Ilmanen1}) with the monotonicity formula, one establishes the existence of tangent flows. For a Brakke flow $\mathcal{K}=\set{\mu_t}_{t\geq t_0}$ and a point $(\yY,s)\in\Real^{n+1}\times(t_0,\infty)$, define a new Brakke flow 
$$
\mathcal{K}^{(\yY,s),\rho}=\set{\mu_t^{(\yY,s),\rho}}_{t\geq\rho^2(t_0-s)},
$$
where
$$
\mu_t^{(\yY,s),\rho}=\mu_{s+\rho^{-2}t}^{\yY,\rho}.
$$
\begin{defn}
Let $\mathcal{K}=\set{\mu_t}_{t\geq t_0}$ be an integral Brakke flow with bounded area ratios. A non-trivial Brakke flow $\mathcal{T}=\set{\nu_t}_{t\in\Real}$ is a \emph{tangent flow} to $\mathcal{K}$ at $(\yY,s)\in\Real^{n+1}\times(t_0,\infty)$, if there is a sequence $\rho_i\to\infty$ so that $\mathcal{K}^{(\yY,s),\rho_i}\to\mathcal{T}$. Denote by $\mathrm{Tan}_{(\yY,s)}\mathcal{K}$ the set of tangent flows to $\mathcal{K}$ at $(\yY,s)$.
\end{defn}
The monotonicity formula implies that any tangent flow is backwardly self-similar.
\begin{thm}[{\cite[Lemma 8]{Ilmanen2}}]\label{BlowupsThm}
Given an integral Brakke flow $\mathcal{K}=\set{\mu_t}_{t\geq t_0}$ with bounded area ratios, a point $(\yY,s)\in\Real^{n+1}\times({t_0},\infty)$ with $\Theta_{(\yY,s)}(\mathcal{K})\geq 1$, and a sequence $\rho_i\to\infty$, there exists a subsequence $\rho_{i_j}$ and a $\mathcal{T}\in\mathrm{Tan}_{(\yY,s)}\mathcal{K}$ so that $\mathcal{K}^{(\yY,s),\rho_{i_j}}\to\mathcal{T}$.

Furthermore, $\mathcal{T}=\set{\nu_t}_{t\in\Real}$ is backwardly self-similar with respect to parabolic rescaling about $(\OO,0)$. That is, for all $t<0$ and $\rho>0$, 
$$
\nu_t=\nu_t^{(\OO,0),\rho}. 
$$
Moreover, $V_{\nu_{-1}}$ is a stationary point of the $F$ functional and 
$$
\Theta_{(\yY,s)}(\mathcal{K})=F[\nu_{-1}].
$$
\end{thm}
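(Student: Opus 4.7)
The plan is to combine the compactness of integral Brakke flows with bounded area ratios together with the equality case of Huisken's monotonicity formula. First, I would observe that since $\mathcal{K}$ has bounded area ratios, the rescaled flows $\mathcal{K}^{(\yY,s),\rho_i}$ all share a uniform bound on their area ratios (area ratios are scale-invariant). Hence Ilmanen's compactness theorem for integral Brakke flows (\cite[7.1]{Ilmanen1}) applies: there is a subsequence $\rho_{i_j}\to\infty$ and an integral Brakke flow $\mathcal{T}=\set{\nu_t}_{t\in\Real}$ with $\mathcal{K}^{(\yY,s),\rho_{i_j}}\to\mathcal{T}$. The hypothesis $\Theta_{(\yY,s)}(\mathcal{K})\geq 1$ combined with the lower semi-continuity properties of the Huisken density under Brakke convergence is what rules out $\mathcal{T}$ being trivial.

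Next, for the self-similarity, I would invoke Huisken's monotonicity formula along $\mathcal{K}$ based at $(\yY,s)$. Writing the monotone Huisken quantity as
\begin{equation*}
\mathcal{F}(t)=\int\Phi_{(\yY,s)}(\xX,t)\, d\mu_t(\xX),
\end{equation*}
and noting it is scale-invariant under parabolic rescaling about $(\yY,s)$, the Huisken quantity for $\mathcal{K}^{(\yY,s),\rho_i}$ based at $(\OO,0)$ coincides with $\mathcal{F}$ evaluated at $s+\rho_i^{-2}t$. As $\rho_i\to\infty$ and the original flow has a well-defined limit $\Theta_{(\yY,s)}(\mathcal{K})$ of $\mathcal{F}(t)$ as $t\to s^-$, the limit Huisken quantity for $\mathcal{T}$ at $(\OO,0)$ is constant in $t$ on $(-\infty,0)$ and equal to $\Theta_{(\yY,s)}(\mathcal{K})$. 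Passing the equality case of Huisken's monotonicity formula to the limit then forces $\mathcal{T}$ to be backwardly self-similar with respect to parabolic rescaling at $(\OO,0)$, namely $\nu_t=\nu_t^{(\OO,0),\rho}$ for every $t<0$ and $\rho>0$.

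Once self-similarity is in hand, the identity $\nu_{-1}=\nu_{-1}^{(\OO,0),\rho}$ restated at a fixed time means that $V_{\nu_{-1}}$ satisfies the weak shrinker equation $\hH_{V_{\nu_{-1}}}+\frac{\xX^\perp}{2}=\OO$, which is exactly the first variation equation for the Gaussian weighted area functional $F$; hence $V_{\nu_{-1}}$ is a stationary point of $F$. Finally, evaluating the limiting Huisken quantity at $t=-1$ and noting that $\Phi_{(\OO,0)}(\xX,-1)=\Phi(\xX)$ identifies
\begin{equation*}
\Theta_{(\yY,s)}(\mathcal{K})=\int\Phi_{(\OO,0)}(\xX,-1)\, d\nu_{-1}(\xX)=F[\nu_{-1}].
\end{equation*}

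The main technical obstacle is extracting backward self-similarity from the equality case of Huisken's monotonicity in the limit: one needs to justify that the vanishing defect term in the limit integrates against the Gaussian weight in a sufficiently strong sense to force the shrinker equation almost everywhere, and not merely along a subsequence of times. This is delicate because Brakke convergence is weak, so the Gaussian integrand must be handled with the bounded-area-ratios estimate to get uniform decay at infinity and thus legitimate passage to the limit of the defect term in Huisken's formula.
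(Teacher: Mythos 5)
The paper does not prove this theorem; it cites it directly as \cite[Lemma 8]{Ilmanen2}, so there is no in-paper proof to compare against. Your reconstruction is essentially the argument that Ilmanen gives in that reference: extract a subsequential limit via the compactness theorem for integral Brakke flows with uniform area-ratio bounds (which hold uniformly for the rescaled flows since area ratios are scale-invariant), use scale invariance to see that the limiting Huisken quantity is constant in $t$ on $(-\infty,0)$ and equal to $\Theta_{(\yY,s)}(\mathcal{K})$, and then apply Huisken's monotonicity along the limit Brakke flow to obtain both non-triviality and backward self-similarity, from which stationarity of $V_{\nu_{-1}}$ for $F$ and the identity $\Theta_{(\yY,s)}(\mathcal{K})=F[\nu_{-1}]$ follow. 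Two small corrections to your phrasing. First, non-triviality does not come from ``lower semi-continuity of the Huisken density'' (and the paper in fact records upper semi-continuity); it is a direct consequence of the constancy you establish, since if $\nu_{-1}$ were trivial one would have $F[\nu_{-1}]=0<1\leq\Theta_{(\yY,s)}(\mathcal{K})$. Second, one does not ``pass the equality case of Huisken's monotonicity to the limit''; rather, one applies Huisken's monotonicity formula for Brakke flows directly to the limit $\mathcal{T}$ and then invokes the constancy of the limiting Gaussian integral to conclude that the defect term vanishes, forcing the shrinker equation for $\nu_t$, $t<0$. The technical point you flag at the end --- that the Gaussian integrals of the rescaled flows actually converge to that of the limit, which requires controlling the tails of the Gaussian with the uniform area-ratio bound --- is precisely where care is needed, and you identify it correctly.
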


\subsection{Level set flows and boundary motions}
We will also need a set-theoretic weak mean curvature flow called the level-set flow. This flow was first studied in the context of numerical analysis by Osher-Sethian \cite{OS}. The mathematical theory was developed by Evans-Spruck \cite{ES1,ES2,ES3,ES4} and Chen-Giga-Goto \cite{CGG}. For our purposes, it has the important advantages of being uniquely defined and satisfying a maximum principle. 

A technical feature of the level-set flow is that the level sets ${L}(\Gamma_0)=\set{\Gamma_t}_{t\geq 0}$ may develop non-empty interiors for positive times. This phenomena is called fattening and is unavoidable for certain initial sets $\Gamma_0$ and is closely related to non-uniqueness phenomena of weak solutions of the flow.  We say $L(\Gamma_0)$ is \emph{non-fattening}, if each $\Gamma_t$ has no interior.  
It is relatively straightforward to see that the non-fattening condition is generic; see for instance \cite[Theorem 11.3]{Ilmanen1}.

In \cite{Ilmanen1}, Ilmanen synthesized both notions of weak flow. In particular, he showed that for a large class of initial sets, there is a canonical way to associate a Brakke flow to the level-set flow, and observed that this allows, among other things, for the application of Brakke's partial regularity theorem. For our purposes, it is important that the Brakke flow constructed does not vanish gratuitously. A similar synthesis may be found in \cite{ES4}.  The result we need is the following:
\begin{thm}[{\cite[Theorem 11.4]{Ilmanen1}}] \label{UnitdensityThm}
If $\Sigma_0$ is a closed hypersurface in $\Real^{n+1}$ and the level-set flow ${L}(\Sigma_0)$ is non-fattening, then there is a set $E\subset \Real^{n+1}\times \Real$ and a Brakke flow $\mathcal{K}=\set{\mu_t}_{t\geq 0}$ so that: 
\begin{enumerate}
 \item $E=\set{(\xX,t): u(\xX,t)>0}$, where $u$ solves the level set flow equation with initial data $u_0$ that satisfies $E_0=\set{\xX: u_0(\xX)>0}$ and $\partial E_0=\set{\xX: u_0(\xX)=0}=\Sigma_0$;
% \item $\mathcal{H}^{n+1}\lfloor E_0=\mathcal{H}^{n+1}\lfloor (\Real^{n+1}\times\set{0}\cap\partial^\ast E )$;
 \item each $E_t=\set{\xX: (\xX,t)\in E}$ is of finite perimeter and $\mu_t=\mathcal{H}^n\lfloor\partial^\ast E_t$, where $\partial^* E_t$ is the reduced boundary of $E_t$.
\end{enumerate}
\end{thm}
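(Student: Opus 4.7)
The plan is to build $u$, $E$, and $\mathcal{K}$ separately and then show they match up in the way the statement demands. First I would pick a Lipschitz $u_0:\Real^{n+1}\to\Real$ with compact support for $\set{u_0\neq 0}$, $\set{u_0>0}=E_0$ (the bounded component of $\Real^{n+1}\setminus\Sigma_0$), and $\set{u_0=0}=\partial E_0=\Sigma_0$; a smoothed, truncated signed distance to $\Sigma_0$ works. I would then invoke the Chen--Giga--Goto/Evans--Spruck viscosity solution theory to obtain a unique continuous $u$ solving the level set flow equation with $u(\cdot,0)=u_0$, and define $E=\set{u>0}$ and $E_t=\set{u(\cdot,t)>0}$. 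The definition of $L(\Sigma_0)$ in terms of $u$ gives $\partial E_t\subset\set{u(\cdot,t)=0}=\Gamma_t$; the non-fattening hypothesis upgrades this to equality by forcing $\Gamma_t$ to have empty interior, so (1) is immediate.

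Next I would verify the finite-perimeter assertion in (2). The avoidance principle applied to a large sphere enclosing $\Sigma_0$ guarantees that each $\spt(u(\cdot,t))$ is uniformly bounded, and viscosity-solution theory preserves the global Lipschitz bound from $u_0$. Hence the coarea formula applied to $u(\cdot,t)$ gives
\begin{equation*}
\int_{-\infty}^{\infty} P(\set{u(\cdot,t)>s})\, ds=\int |\nabla u(\cdot,t)|\, d\mathcal{L}^{n+1}<\infty,
\end{equation*}
so $\set{u(\cdot,t)>s}$ has finite perimeter for a.e.\ $s$. Since $E_t=\bigcup_{s>0}\set{u(\cdot,t)>s}$ and non-fattening plus continuity rule out sudden mass concentration near the level $0$, one can take $s\to 0^+$ and conclude that $E_t$ has finite perimeter for every $t$.

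For the Brakke flow I would carry out Ilmanen's elliptic regularization. For each $\eps>0$, produce a smooth translator $\Gamma^\eps\subset\Real^{n+2}$ for the mean curvature flow by solving the $\eps$-regularized elliptic translator equation on a large cylindrical domain with Dirichlet data matching $\Sigma_0\times\Real$. Then $t\mapsto \Gamma^\eps-(t/\eps)\eE_{n+2}$ moves by mean curvature, and slicing in the $(n+2)$-th direction yields a Brakke flow $\mathcal{K}^\eps=\set{\mu_t^\eps}_{t\geq 0}$ in $\Real^{n+1}$ with locally uniformly bounded area ratios (thanks to Huisken monotonicity and the barrier sphere). Brakke compactness produces a subsequential limit Brakke flow $\mathcal{K}=\set{\mu_t}_{t\geq 0}$, whose support is contained in $\set{u(\cdot,t)=0}=\partial E_t$ by the known correspondence between the $\eps\to 0$ limit of the graphs and the zero set of the viscosity solution.

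The hard part is the unit-density identification $\mu_t=\mathcal{H}^n\lfloor\partial^* E_t$. One direction is comparatively soft: the associated BV functions $\chi_{E_t^\eps}$ converge in $L^1_{loc}$ to $\chi_{E_t}$ and, by lower semicontinuity of perimeter, $\mathcal{H}^n\lfloor\partial^* E_t\leq \mu_t$. The reverse inequality is where non-fattening is used decisively. Because $\partial E_t$ has empty interior and $\spt(\mu_t)\subset\partial E_t$, one can apply the constancy/clearing-out theorems from Brakke's local regularity (or White's version) away from a parabolic-measure-zero singular set; at any regular point the graphical elliptic regularization represents a single sheet rather than two, so the density is exactly one. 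This, together with the $L^1$ convergence above and Huisken monotonicity to rule out loss of mass to infinity, forces $\mu_t\leq\mathcal{H}^n\lfloor\partial^* E_t$ and completes the proof. The main obstacle throughout is the coherent use of non-fattening to exclude a multiplicity-two sheet collapsing onto $\partial^* E_t$ or mass vanishing on a set of positive $\mathcal{H}^n$-measure; once that is handled via the elliptic approximation, the remaining steps are standard compactness and measure-theoretic boundary arguments.
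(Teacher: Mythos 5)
The paper does not actually prove this statement; it is cited verbatim as \cite[Theorem 11.4]{Ilmanen1} with no in-text argument, so there is no paper proof to compare against. Your sketch does correctly identify the strategy of Ilmanen's memoir — build the viscosity solution, run elliptic regularization to produce translating graphs $\Gamma^\eps\subset\Real^{n+2}$, slice, pass to a subsequential Brakke limit, and compare to $\partial^* E_t$ via BV compactness — and the easy direction $\mathcal{H}^n\lfloor\partial^* E_t \leq \mu_t$ by lower semicontinuity of perimeter is stated correctly.

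The gap is in the reverse inequality $\mu_t \leq \mathcal{H}^n\lfloor\partial^* E_t$, which is the whole content of the theorem. Two problems. First, although $\Gamma^\eps$ is a graph in $\Real^{n+2}$, its time-slices $\Gamma^\eps_t = \partial^* P^\eps_t\subset\Real^{n+1}$ are \emph{not} graphs, and two sheets of $\partial^* P^\eps_t$ can collapse together as $\eps\to 0$, contributing multiplicity two to $\mu_t$ on a set where $\partial^* E_t$ has measure zero (the two sheets cancel in the $L^1$ limit of $\chi_{P^\eps_t}$). ``Single-sheetedness'' of $\Gamma^\eps$ simply does not descend to the slices, so nothing in the construction as you describe it excludes this scenario. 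Second, invoking Brakke's local regularity or White's version to establish density one is circular: White's theorem requires the Gaussian density ratio to be below a fixed threshold \emph{as a hypothesis}, and Brakke's main regularity theorem also takes unit density as a hypothesis (a static multiplicity-two plane is an integral Brakke flow, so Brakke's theorem cannot conclude $\theta_{\mu_t}=1$ unconditionally). Non-fattening therefore does considerably more work than guaranteeing $\partial E_t$ has empty interior; Ilmanen's argument uses it, together with the BV structure of the approximating subgraphs and a quantitative mass-matching between the interior region $E_t$ and its complement, to trap the total measure and exclude the doubled sheet. A related omission: your coarea argument yields finite perimeter only for a.e.\ sublevel set $\set{u(\cdot,t)>s}$, not for $E_t=\set{u(\cdot,t)>0}$ itself; in Ilmanen's treatment the finite perimeter of $E_t$ is a byproduct of the bound $\mathcal{H}^n\lfloor\partial^* E_t\leq\mu_t$ together with the global mass control on $\mu_t$, not an independent input.
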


\section{Regularity of Self-Shrinking Measures of Small Entropy} \label{RegularitySec}
We establish some regularity properties of self-shrinking measures of small entropy when $n\geq 3$. We restrict to $n\geq 3$ in order to avoid certain technical complications coming from the fact that $\lambda_1>\frac{3}{2}$. 

\subsection{Self-shrinking measures}
We will need a singular analog of $\mathcal{S}_n$.  To that end, we define the set of self-shrinking measures on $\Real^{n+1}$ by
\begin{equation*}
\mathcal{SM}_n=\set{\mu\in\IM_n(\Real^{n+1}): V_\mu\mbox{ is stationary for the ${F}$ functional}, \spt(\mu)\neq\emptyset}.
\end{equation*}
Clearly, if $\Sigma\in \mathcal{S}_n$, then $\mu_\Sigma\in\mathcal{SM}_n$.  There are many examples of singular self-shrinkers. For instance,  any element of $\mathcal{C}\in\mathcal{RMC}_n$ satisfies $\mu_{\mathcal{C}}=\mathcal{H}^n\lfloor\mathcal{C}\in \mathcal{SM}_n$. 
For $\mu\in\mathcal{SM}_n$, we define the \emph{associated Brakke flow} $\mathcal{K}=\set{\mu_t}_{t\in\Real}$ by
\begin{equation*}
\mu_t= \left \{ \begin{array}{cc} 
                               0 & t\geq 0 \\
                               \mu^{\OO, \sqrt{-t}} & t<0.
                       \end{array} \right. 
\end{equation*}
One can verify that this is a Brakke flow.
Given $\Lambda>0$, set
\begin{equation*}
\mathcal{SM}_n(\Lambda)=\set{\mu\in\mathcal{SM}_n: \lambda[\mu]<\Lambda} \mbox{ and } \mathcal{SM}_n[\Lambda]=\set{\mu\in\mathcal{SM}_n: \lambda[\mu]\leq \Lambda}.
\end{equation*}

\subsection{Regularity and asymptotic properties of self-shrinking measures of small entropy}
A $\mu\in\IM_n(\Real^{n+1})$ is a \emph{cone}, if $\mu^{\OO,\rho}=\mu$. Likewise, $\mu\in\IM_n(\Real^{n+1})$ \emph{splits off a line}, if, up to an ambient rotation of $\Real^{n+1}$, $\mu=\hat{\mu}\times\mu_\Real$ for $\hat{\mu}\in\IM_{n-1}(\Real^{n})$. Observe that if $\mu\in\mathcal{SM}_n$ is a cone, then $V_\mu$ is stationary (for area). Similarly, if $\mu\in\mathcal{SM}_n$ splits off a line, then $\hat{\mu}\in\mathcal{SM}_{n-1}$ and $\lambda[\mu]=\lambda[\hat{\mu}]$.

Standard dimension reduction arguments give the following:
\begin{lem} \label{DimRedConeLem}
Fix $n\geq 3$ and  $\Lambda\leq 3/2$ and suppose that \eqref{Assump1} holds. If $\mu\in\mathcal{SM}_n(\Lambda)$ is a cone, then $\mu=\mu_P$ for some hyperplane $P$.
\end{lem}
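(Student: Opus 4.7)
The plan is a Federer-style dimension reduction fueled by the entropy bound together with \eqref{Assump1}. Since $\mu$ is a cone, the discussion preceding the lemma gives that $V_\mu$ is stationary for area, so $\mu$ is a stationary integral cone varifold. Moreover the entropy bound implies $\mu$ has multiplicity one everywhere: at any $\xX\in\spt(\mu)$ the density $\theta_\mu(\xX)$ is a positive integer bounded above by $\lambda[\mu]<3/2<2$.

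First I will show $\spt(\mu)\setminus\set{\OO}$ is smooth. Suppose not and pick $\xX_0\in\sing(\spt(\mu))\setminus\set{\OO}$, and let $\nu_1$ be a tangent measure to $\mu$ at $\xX_0$ produced by the standard compactness of stationary integral varifolds with uniform mass bound. Since $\mu$ is a cone and $\xX_0\neq\OO$, $\nu_1$ splits off the line $\Real\xX_0$, so $\nu_1=\hat{\nu}_1\times\mu_\Real$ with $\hat{\nu}_1\in\mathcal{SM}_{n-1}$ a stationary cone satisfying $\lambda[\hat{\nu}_1]=\lambda[\nu_1]\leq\lambda[\mu]<3/2$. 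Because $\xX_0$ is singular, Allard's regularity theorem prevents $\nu_1$ from being a multiplicity-one hyperplane, so $\hat{\nu}_1$ is non-flat. If $\hat{\nu}_1$ still carries a singular point off of $\OO$, repeat the procedure. The ambient dimension drops by one at each iteration, so after finitely many steps one arrives at a stationary cone $\tilde{\nu}\in\mathcal{SM}_k$ in $\Real^{k+1}$ for some $1\leq k\leq n$, with multiplicity one, isolated singularity at $\OO$, entropy strictly less than $3/2$, and non-flat support.

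I then rule out each possible value of $k$. If $k\geq 3$, then $\spt(\tilde{\nu})\in\mathcal{RMC}_k^*$, which is empty by \eqref{Assump1}. If $k=2$, the link of $\spt(\tilde{\nu})$ in $\mathbb{S}^2$ is a smooth closed geodesic, hence a great circle, contradicting non-flatness. If $k=1$, stationarity of the finite ray configuration in $\Real^2$ either yields a line (flat, contradiction) or at least three rays from $\OO$; the latter configuration gives Gaussian area at least $3/2$ at $\OO$, and hence entropy at least $3/2$, contradicting the strict bound. Thus $\spt(\mu)\setminus\set{\OO}$ is smooth and $\spt(\mu)\in\mathcal{RMC}_n$; one final application of \eqref{Assump1} (using $n\geq 3$) forces $\spt(\mu)$ to be a hyperplane $P$, so $\mu=\mu_P$. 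The main obstacle is tracking the strict inequality on entropy through the dimension reduction; in the base case $k=1$ the strictness is used sharply, whereas in the cases $k\geq 3$ it is \eqref{Assump1} that does the real work.
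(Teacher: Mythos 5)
Your proposal is correct and is essentially the same Federer-style dimension reduction the paper carries out; the main cosmetic difference is in where you stop citing and start re-deriving. The paper runs a clean induction on the ambient dimension $m$ from $3$ up to $n$: the base case $m=3$ is handled by invoking \cite[Proposition 4.2]{BernsteinWang}, which already packages the analysis needed to conclude that a low-entropy self-shrinking cone measure in $\Real^4$ is a regular minimal cone (so that $\rstar{n,\Lambda}$ then forces flatness), and the inductive step reduces a putative singular point $\yY\neq\OO$ to a product of the form $\hat{\nu}\times\mu_{\Real}$ in one lower dimension, applies the inductive hypothesis to flatten $\hat{\nu}$, and invokes Allard's regularity theorem for a contradiction. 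You instead run the descent all the way down and rule out the terminal dimensions $k=1$ and $k=2$ by hand (rays balancing at a point, embedded closed geodesics on $\mathbb{S}^2$), which amounts to re-proving the ingredients behind the cited proposition. One small imprecision worth noting: your opening remark that $\theta_\mu(\xX)$ is a positive integer at every $\xX\in\spt(\mu)$ is not literally true for a stationary integral varifold (the density is an integer only $\mu$-a.e. and can be non-integral at singular points); what your argument actually uses is only that the density is bounded by $\lambda[\mu]<2$, so that whenever a tangent cone is a plane its multiplicity is forced to be one and Allard applies — that part is fine.
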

\begin{proof}
 We will prove this by showing that if \eqref{Assump1} holds, then for all $3\leq m\leq n$, if $\mu\in\mathcal{SM}_m(\Lambda)$ is a cone, then $\mu=\mu_{P}$ for a hyperplane $P$ in $\mathbb{R}^{m+1}$.

We proceed by induction on $m$. When $m=3$, note that $\Lambda\leq\frac{3}{2}$ and so we have that $\mu=\mu_{\mathcal{C}}$ for some $\mathcal{C}\in \mathcal{RMC}_3$ by \cite[Proposition 4.2]{BernsteinWang}. Hence, by the assumption that $\mathcal{RMC}_3^*(\Lambda)=\emptyset$, we must have that $\mathcal{C}$ is a hyperplane through the origin.  
To complete the induction argument, we observe that it suffices to show that if $\mu\in \mathcal{SM}_m(\Lambda)$ is a cone, then $\mu=\mu_{\mathcal{C}}$ for some $\mathcal{C}\in \mathcal{RMC}_m(\Lambda)$.  Indeed, such a $\mathcal{C}$ must be a hyperplane because \eqref{Assump1} holds and so, by definition, $\mathcal{RMC}^*_m(\Lambda)=\emptyset$ for $3\leq m \leq n$.

To complete the proof, we argue by contradiction. Suppose that $\spt(\mu)$ is not a regular cone. Then there is a point $\yY\in\sing(\mu)\setminus\set{\OO}.$  As $V_\mu$ is stationary, and $\mu\in\IM_m$ with $\lambda[\mu]<\Lambda$, we may apply Allard's integral compactness theorem (see \cite[Theorem 42.7 and Remark 42.8]{Simon}) to conclude that there exists a sequence $\rho_i\to\infty$ so that $\mu^{\yY,\rho_i}\to\nu$ and $V_\nu$ is a stationary integral varifold. Moreover, it follows from the monotonicity formula \cite[Theorem 17.6]{Simon} that $\nu$ is a cone; see also \cite[Theorem 19.3]{Simon}.

As $\mu$ is a cone, $\nu$ splits off a line. That is, $\nu=\hat{\nu}\times\mu_\Real$, where $\hat{\nu}\in\mathcal{IM}_{m-1}$ and $V_{\hat{\nu}}$ is a stationary  cone and so $\hat{\nu}\in \mathcal{SM}_{m-1}$. Moreover, by the lower semi-continuity of entropy, 
$$
\lambda[\hat{\nu}]=\lambda[\hat{\nu}\times\mu_\Real]\leq\lambda[\mu]<\Lambda.
$$
Thus, it follows from the induction hypotheses that $\hat{\nu}=\mu_{\hat{P}}$, where $\hat{P}$ is a hyperplane in $\Real^m$ and so $V_\nu$ is a multiplicity-one hyperplane. Hence, by Allard's regularity theorem (see \cite[Theorem 24.2]{Simon}), $\yY\in\reg(\mu)$, giving a contradiction. Therefore, $\mu=\mu_{\mathcal{C}}$ for a $\mathcal{C}\in\mathcal{RMC}_m(\Lambda)$.
\end{proof}

As a consequence, we obtain regularity for elements of $\mathcal{SM}_n(\Lambda)$ under the hypothesis that \eqref{Assump1} holds.
\begin{prop} \label{CondRegProp}
Fix $n\geq 3$ and  $\Lambda\leq 3/2$ and suppose that \eqref{Assump1} holds. If $\mu\in \mathcal{SM}_n(\Lambda)$, then $\mu=\mu_\Sigma$ for some $\Sigma\in \mathcal{S}_n(\Lambda)$.
\end{prop}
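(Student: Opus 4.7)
The plan is to show that every point of $\spt(\mu)$ is regular of multiplicity one, by combining a tangent-cone analysis (in the spirit of Lemma \ref{DimRedConeLem}) with Allard's regularity theorem.

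First I would fix $\xX_0 \in \spt(\mu)$ and consider the blow-up sequence $\mu_i = \mu^{\xX_0, \rho_i}$ for $\rho_i \to \infty$. Since $V_\mu$ is stationary for $F$, it satisfies the self-shrinker equation in the varifold sense, and hence has locally bounded generalized mean curvature equal to $-\xX^\perp/2$; after rescaling, the mean curvature of $V_{\mu_i}$ on any bounded region tends to zero. Translation- and scale-invariance of entropy give $\lambda[\mu_i] = \lambda[\mu] < \Lambda$, which provides uniform local mass bounds. Allard's integral compactness theorem (e.g.\ \cite[Theorem 42.7]{Simon}) then yields, along a subsequence, $\mu_i \to \nu$ with $V_\nu$ a stationary integral varifold and $\lambda[\nu] \leq \lambda[\mu] < \Lambda$ by the lower semi-continuity of entropy.

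Next I would invoke the classical monotonicity formula for stationary integral varifolds \cite[Theorem 17.6]{Simon} to conclude that $\nu$ is a cone, exactly as in the proof of Lemma \ref{DimRedConeLem}. Since $V_\nu$ is a stationary cone, $\hH_\nu = 0$ and $\xX^\perp = 0$ at every smooth point of $\spt(\nu)$, so $\nu$ itself solves the self-shrinker equation and thus $\nu \in \mathcal{SM}_n(\Lambda)$. Applying Lemma \ref{DimRedConeLem} then forces $\nu = \mu_P$ for a hyperplane $P$ through the origin---here the bound $\Lambda \leq 3/2 < 2$ is what rules out higher multiplicity, since a multiplicity-$k$ plane has entropy $k$.

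Finally, as every tangent varifold to $V_\mu$ at $\xX_0$ is a multiplicity-one plane, I would apply Allard's regularity theorem for varifolds of bounded generalized mean curvature to obtain a neighborhood of $\xX_0$ on which $\spt(\mu)$ is a $C^{1,\alpha}$ graph of multiplicity one over $P$; a standard elliptic bootstrap using the self-shrinker equation $\hH_\mu = -\xX^\perp/2$ then upgrades this to smoothness. As $\xX_0 \in \spt(\mu)$ was arbitrary, one concludes $\mu = \mu_\Sigma$ for a smooth properly embedded hypersurface $\Sigma$ satisfying the self-shrinker equation, so $\Sigma \in \mathcal{S}_n(\Lambda)$. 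The main obstacle is verifying that the blow-up limit $\nu$ is genuinely a stationary cone; this reduces to the standard stationary monotonicity framework once one observes that the $\xX^\perp/2$ term in the self-shrinker equation scales away under blow-ups based at a finite point, after which everything follows from Lemma \ref{DimRedConeLem} and Allard.
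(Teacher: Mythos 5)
Your proposal follows the paper's route essentially verbatim: blow up at an arbitrary point of $\spt(\mu)$, use the locally bounded generalized mean curvature together with Allard's integral compactness and the monotonicity formula to extract a stationary integral tangent cone of entropy below $\Lambda$, force it to be a multiplicity-one hyperplane via Lemma \ref{DimRedConeLem}, and then conclude local regularity by Allard's theorem and elliptic bootstrap. The only step the paper spells out that you assert rather than justify is the properness of $\spt(\mu)$: the paper deduces polynomial volume growth from the entropy bound and cites \cite{CZProper}, though this conclusion also follows simply from the fact that $\spt(\mu)$ is a closed subset of $\Real^{n+1}$ that you have shown to be locally a hypersurface.
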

\begin{proof}
Observe that for $\mu\in\mathcal{SM}_n(\Lambda)$, the mean curvature of $V_\mu$ is locally bounded by \eqref{SSEqn}. Following the same reasoning in the proof of Lemma \ref{DimRedConeLem}, given $\yY\in\sing (\mu)$, there exists a sequence $\rho_i\to\infty$ so that $\mu^{\yY,\rho_i}\to\nu$ and $V_\nu$ is a stationary cone and so $\nu\in \mathcal{SM}_{n}$. By the lower semi-continuity of entropy, $\lambda[\nu]\leq\lambda[\mu]<\Lambda$. Hence, together with Lemma \ref{DimRedConeLem}, it follows that $\sing(\mu)=\emptyset$. That is, $\spt(\mu)$ is a smooth submanifold of $\Real^{n+1}$ that, moreover, satisfies \eqref{SSEqn}.  Finally, the entropy bound on $\mu$ implies that $\mu(B_R)\leq C R^n$ for some $C>0$ and so, by \cite[Theorem 1.3]{CZProper}, $\spt(\mu)$ is proper.  That is, $\mu=\mu_\Sigma$ for some $\Sigma\in \mathcal{S}_n$.
\end{proof}

If, in addition, \eqref{Assump2} holds:
\begin{prop}\label{CondTotRegProp}
Fix $n\geq 3$ and $\Lambda\leq\Lambda_{n-1}$ and suppose that both \eqref{Assump1} and \eqref{Assump2} hold. If $\mu\in\mathcal{SM}_n(\Lambda)$, then $\mu=\mu_\Sigma$ for some $\Sigma\in\mathcal{S}_n(\Lambda)$, and either $\Sigma$ is diffeomorphic to $\mathbb{S}^n$ or $\Sigma\in\mathcal{ACS}_n$. 
\end{prop}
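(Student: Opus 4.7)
The plan is to apply Proposition~\ref{CondRegProp} to obtain smoothness of $\mu$ and then, in the non-compact case, establish asymptotic conicality by analyzing the blowdown of $\Sigma$.

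For the regularity step, since $n \geq 3$ and $\Lambda \leq \lambda_{n-1} \leq \lambda_2 < \tfrac{3}{2}$, Proposition~\ref{CondRegProp} applies directly and yields $\mu = \mu_\Sigma$ for some $\Sigma \in \mathcal{S}_n(\Lambda)$. If $\Sigma$ is already compact, one invokes the sharp entropy rigidity of \cite{BernsteinWang} (the round sphere is the unique entropy-minimizer among closed self-shrinkers) together with the product-descent consequence of hypothesis $\rsstar{n,\Lambda}$---namely that non-flat self-shrinkers are excluded in every dimension below $n$, since any $\Sigma_0 \in \mathcal{S}_{m}^*$ with $\lambda[\Sigma_0] < \Lambda$ gives rise to $\Sigma_0 \times \Real^{n-1-m} \in \mathcal{S}_{n-1}^*(\Lambda)$---to deduce $\Sigma$ is diffeomorphic to $\mathbb{S}^n$.

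When $\Sigma$ is not compact, the plan is to analyze its blowdowns. For any sequence $\rho_i \to 0^+$, the rescaled measures $\mu_\Sigma^{\OO,\rho_i}$ have entropies bounded by $\Lambda$, so Allard's integer compactness theorem extracts a subsequential limit $\nu \in \IM_n(\Real^{n+1})$. Using the rescaled self-shrinker identity $\hH_{\rho\Sigma}(\yY) = -\rho^{-2}\yY^\perp/2$, combined with entropy-controlled area and curvature estimates for self-shrinkers, one shows that the first variation of $V_{\mu_\Sigma^{\OO,\rho_i}}$ tends to zero when tested against smooth vector fields compactly supported in $\Real^{n+1}\setminus\set{\OO}$. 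Hence $V_\nu$ is a stationary integral varifold, and a Huisken-style monotonicity argument shows $\nu$ is dilation-invariant and so a cone. Since minimal cones automatically satisfy the self-shrinker equation, $\nu \in \mathcal{SM}_n(\Lambda)$, and Lemma~\ref{DimRedConeLem} identifies $\nu = \mu_P$ for a hyperplane $P$ through $\OO$, with multiplicity one forced by $\lambda[\nu] < \tfrac{3}{2}$.

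Finally, to upgrade this to $C^\infty_{loc}(\Real^{n+1}\setminus\set{\OO})$ convergence $\rho\Sigma \to P$ independent of subsequence, Allard's regularity theorem at the multiplicity-one planar blowdown provides smooth subsequential convergence, and uniqueness across subsequences follows from an L.~Wang-type dichotomy: every end of a self-shrinker with polynomial area growth is asymptotic to a product $\Sigma_0 \times \Real^k$ with $\Sigma_0 \in \mathcal{S}_{n-k}$; the entropy bound combined with $\rsstar{n,\Lambda}$ (via the same product descent as above) forces each $\Sigma_0$ to be flat, so every end is asymptotic to a hyperplane, and the blowdown is unique. Consequently $\Sigma \in \mathcal{ACS}_n$ with $\mathcal{C}(\Sigma) = P$. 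The main technical obstacle is the stationarity step of the blowdown: the divergent $\rho^{-2}$ factor in the rescaled mean curvature must be handled carefully by exploiting the self-shrinker identity $|\xX^\perp| = 2|\hH_\Sigma|$ together with entropy-controlled integrability (not merely polynomial area growth), without presupposing the asymptotic conicality one is trying to prove.
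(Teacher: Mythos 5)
The compact case has a citation issue: the entropy rigidity result of \cite{BernsteinWang} only forces $\Sigma$ to be the round sphere when $\lambda[\Sigma]=\lambda_n$, but here $\lambda[\Sigma]$ can lie anywhere in $(\lambda_n,\Lambda)$. The ``product descent'' you describe yields $\mathcal{S}^*_m(\Lambda)=\emptyset$ for $m\leq n-1$ but says nothing about a closed $n$-dimensional self-shrinker with entropy strictly between $\lambda_n$ and $\lambda_{n-1}$. What the paper actually invokes is \cite[Theorem~0.7]{CIMW}, which directly classifies closed self-shrinkers with entropy at most $\lambda_{n-1}$ as diffeomorphic to $\mathbb{S}^n$.

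The non-compact case contains a fatal gap. You claim that the first variation of $V_{\mu_\Sigma^{\OO,\rho_i}}$, tested on vector fields supported away from $\OO$, tends to zero, so that the blowdown $\nu$ is a \emph{stationary} cone; then Lemma~\ref{DimRedConeLem} would make it a multiplicity-one hyperplane. This is false in general and, more importantly, would prove far too much. The rescaled mean curvature is $\mathbf{H}_{\rho\Sigma}=-\rho^{-2}\yY^\perp/2$, and a change of variables shows that the first variation of $\rho\Sigma$ on an annulus converges (in the situation one is trying to establish) not to zero but to the first variation of the asymptotic cone $\mathcal{C}(\Sigma)$, i.e.\ to $-\int_{\mathcal{C}(\Sigma)}H_{\mathcal{C}(\Sigma)}\langle X,\nN\rangle$. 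The asymptotic cone of an asymptotically conical self-shrinker is not a minimal cone in general, so this limit need not vanish, and the blowdown need not be stationary. The hypothesis $\rstar{n,\Lambda}$ forbids non-flat \emph{minimal} cones of low entropy, but places no restriction on the (generally non-minimal) asymptotic cones of self-shrinkers. Were your argument correct, one would conclude $\mathcal{C}(\Sigma)=P$ a hyperplane and, by Wang's uniqueness, $\Sigma=P$; this would preclude the non-trivial elements of $\mathcal{ACS}_n^*(\Lambda)$ on which the paper's entire surgery procedure in Theorem~\ref{CondSurgThm} is built. Relatedly, the initial compactness step also fails as stated: Allard's integral compactness requires a locally uniform bound on $\|\delta V_{\rho\Sigma}\|$, and the only a priori bound $|H_{\rho\Sigma}(\yY)|\leq |\yY|/(2\rho^2)$ diverges as $\rho\to 0$.

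The paper avoids all of this by working with the Brakke flow $\mu_t=\mu_{\sqrt{-t}\Sigma}$ rather than with first variation of the rescaled varifolds. It defines $\mathcal{X}=\set{\yY\neq\OO:\Theta_{(\yY,0)}(\mathcal{K})\geq 1}$, shows $\mathcal{X}$ is non-empty and a cone by parabolic self-similarity, and then establishes regularity of $\mathcal{X}$ by analyzing tangent flows at $(\yY,0)$: by \cite[Lemma~4.4]{BernsteinWang} these split off a line, so their cross-sections lie in $\mathcal{SM}_{n-1}(\Lambda)$; Proposition~\ref{CondRegProp} together with $\rsstar{n,\Lambda}$ forces these to be multiplicity-one hyperplanes; Brakke/White regularity then gives the smooth conical convergence $\sqrt{-t}\,\Sigma\to\mathcal{X}$. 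This route uses monotonicity and dimension reduction in place of a stationarity claim that is simply not true for the blowdown, and is what you need to adopt.
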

\begin{proof}
First observe that, by Proposition \ref{CondRegProp}, $\mu=\mu_\Sigma$ for some $\Sigma\in \mathcal{S}_n(\Lambda)$. If $\Sigma$ is closed, then it follows from \cite[Theorem 0.7]{CIMW} that $\Sigma$ is diffeomorphic to $\mathbb{S}^n$. On the other hand, if $\Sigma$ is not closed, then it is non-compact.

Let $\mathcal{K}=\set{\mu_t}_{t\in\Real}$ be the Brakke flow associated to $\mu$. Note that $\mu_t=\mu_{\sqrt{-t}\, \Sigma}$ for $t<0$. Let $\mathcal{X}=\set{\yY: \yY\neq\OO, \Theta_{(\yY,0)}(\mathcal{K})\geq 1}\subset \Real^{n+1}\setminus\set{\OO}$. As $\Sigma$ is non-compact, $\mathcal{X}$ is non-empty. Indeed, pick any sequence of points $\yY_i\in\Sigma$ with $|\yY_i|\to\infty$. The points $\hat{\yY}_i=|\yY_i|^{-1}\yY_i\in |\yY_i|^{-1}\Sigma$. Hence, $\Theta_{(\hat{\yY}_i, -|\yY_i|^{-2})}(\mathcal{K})\geq 1$. As the $\hat{\yY}_i$ are in a compact subset, up to passing to a subsequence and relabeling, $\hat{\yY}_i\to\hat{\yY}$, and so the upper semi-continuity of Gaussian density implies that $\Theta_{(\hat{\yY},0)}(\mathcal{K})\geq 1$.  

We next show that $\mathcal{X}$ is a regular cone. The fact that $\mathcal{X}$ is a cone readily follows from the fact that $\mathcal{K}$ is invariant under parabolic scalings. To see that $\sing(\mathcal{X})\subset \set{\OO}$, we note that, by \cite[Lemma 4.4]{BernsteinWang}, for any $\yY\in\mathcal{X}$ and $\mathcal{T}\in\mathrm{Tan}_{(\yY,0)}\mathcal{K}$, $\mathcal{T}=\set{\nu_t}_{t\in\Real}$ splits off a line. That is, up to an ambient rotation, $\nu_{t}=\hat{\nu}_t\times\mu_\Real$ with $\set{\hat{\nu}_t}_{t\in\Real}$ the Brakke flow associated to $\hat{\nu}_{-1}\in\mathcal{SM}_{n-1}(\Lambda)$.  Here we use the lower semi-continuity of entropy. Note that $\Lambda\leq\lambda_{n-1}<3/2$. Thus, by Proposition \ref{CondRegProp} and the hypothesis that $\rstar{n,\Lambda}$ holds, $\hat{\nu}_{-1}=\mu_\Gamma$ for $\Gamma\in\mathcal{S}_{n-1}(\Lambda)$. Hence, as we assume that \eqref{Assump2} holds, $\Gamma$ is a hyperplane through the origin. Therefore, it follows from Brakke's regularity theorem that, for $t<0$ close to $0$, $\spt(\mu_t)$ has uniformly bounded curvature near $\yY$ and so $\sqrt{-t}\, \Sigma\to\mathcal{X}$ in $C^\infty_{loc}\left(\Real^{n+1}\backslash \set{\OO}\right)$, concluding the proof.  
\end{proof}

As a consequence, we establish the following compactness theorem for asymptotically conical self-shrinkers of small entropy.
\begin{cor}\label{CpctnessACSCor}
Fix $n\geq 3$,  $\Lambda\leq\Lambda_{n-1}$, and $\epsilon_0>0$. If both \eqref{Assump1} and \eqref{Assump2} hold, then the set
$$
\mathcal{ACS}_n[\Lambda-\epsilon_0]=\set{\Sigma: \Sigma \in \mathcal{ACS}_n \mbox{ and } \lambda[\Sigma]\leq \Lambda-\epsilon_0}
$$ 
is compact in the $C^\infty_{loc}(\Real^{n+1})$ topology. 
\end{cor}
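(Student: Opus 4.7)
The plan is a standard compactness argument built atop Proposition \ref{CondTotRegProp}. Take a sequence $\{\Sigma_i\}_{i\ge 1}\subset \mathcal{ACS}_n[\Lambda-\epsilon_0]$; the goal is to extract a subsequence converging in $C^\infty_{loc}(\Real^{n+1})$ to an element of the same set. The uniform entropy bound $\lambda[\Sigma_i]\le \Lambda-\epsilon_0$ yields locally uniform area-ratio bounds for the associated measures $\mu_{\Sigma_i}$, and the self-shrinker equation $\hH_{\Sigma_i}=-\xX^\perp/2$ supplies a locally uniform bound on the mean curvature. By Allard's integral compactness theorem (as used in the proof of Lemma \ref{DimRedConeLem}), pass to a subsequential weak limit $\mu_{\Sigma_i}\to\mu$ in $\IM_n$ whose associated varifold is stationary for $F$. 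Lower semicontinuity of entropy gives $\lambda[\mu]\le \Lambda-\epsilon_0<\Lambda$, so $\mu\in\mathcal{SM}_n(\Lambda)$.

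Next, invoke Proposition \ref{CondTotRegProp} to obtain $\mu=\mu_{\Sigma_\infty}$ for some $\Sigma_\infty\in\mathcal{S}_n(\Lambda)$ that is either diffeomorphic to $\mathbb{S}^n$ or lies in $\mathcal{ACS}_n$. Since the limit is then smooth with multiplicity one, Allard's regularity theorem upgrades the convergence to $C^{1,\alpha}$ in suitable graphical charts, and Schauder bootstrapping on the quasilinear elliptic self-shrinker equation improves this to $C^\infty_{loc}$ smooth convergence $\Sigma_i\to\Sigma_\infty$. Lower semicontinuity of the entropy gives $\lambda[\Sigma_\infty]\le \Lambda-\epsilon_0$ automatically, so the only remaining issue is to show $\Sigma_\infty\in\mathcal{ACS}_n$.

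The main obstacle is therefore to rule out the compact case $\Sigma_\infty\cong \mathbb{S}^n$. My plan here is a mass comparison at large scales. Each $\Sigma_i\in\mathcal{ACS}_n$ has a regular asymptotic cone $\mathcal{C}(\Sigma_i)$, and since the link of any regular minimal cone is a closed minimal submanifold of the unit sphere, the standard density lower bound for minimal cones (equivalently the sharp inequality $F[\mathcal{C}(\Sigma_i)]\ge 1$, with equality iff the cone is a hyperplane through the origin) gives $\mathcal{H}^n(\mathcal{C}(\Sigma_i)\cap B_R)\ge \omega_n R^n$. Huisken's monotonicity for the self-shrinking flow $\{\sqrt{-t}\,\Sigma_i\}_{t<0}$ combined with the upper entropy bound prevents the ``core radius'' (beyond which $\Sigma_i$ is uniformly graphical over $\mathcal{C}(\Sigma_i)$) from diverging in $i$, so one obtains $\mathcal{H}^n(\Sigma_i\cap B_R)\ge c R^n$ for all $R\ge R_0$, with $R_0$ and $c>0$ independent of $i$. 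If instead $\Sigma_\infty = \mathbb{S}^n_*$, the weak varifold convergence forces $\mathcal{H}^n(\Sigma_i\cap B_R)\to \mathcal{H}^n(\mathbb{S}^n_*)$ for all but countably many $R$, which is a fixed finite constant — contradicting the lower bound $cR^n\to\infty$ as $R\to\infty$. The hardest step is making the uniform lower bound on $R_0$ precise; I expect this to follow from a blow-down argument along any sequence $\yY_i\in\Sigma_i$ with $|\yY_i|\to\infty$, whose limit, by Lemma \ref{DimRedConeLem}, is forced to be a hyperplane, together with the resulting uniform graphical control provided by Brakke's regularity theorem.
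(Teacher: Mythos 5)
Your first four steps exactly follow the paper's argument: extract a weak subsequential limit $\mu$ via integral varifold compactness, use lower semicontinuity of entropy to get $\mu\in\mathcal{SM}_n[\Lambda-\epsilon_0]$, invoke Proposition~\ref{CondTotRegProp} (in fact the paper only needs Proposition~\ref{CondRegProp} at this point) to identify $\mu=\mu_\Sigma$ for a smooth multiplicity-one $\Sigma\in\mathcal{S}_n[\Lambda-\epsilon_0]$, and then apply Allard's regularity theorem to upgrade to $C^\infty_{loc}$ convergence $\Sigma_i\to\Sigma$. The divergence occurs at the last step, ruling out the case that $\Sigma$ is compact.

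The paper handles this step with a one-line observation: each $\Sigma_i$ is connected and non-compact, and these properties persist in the $C^\infty_{loc}$ limit; combined with Proposition~\ref{CondTotRegProp}, this forces $\Sigma\in\mathcal{ACS}_n$. The underlying reason non-compactness persists (made explicit by the paper in the proof of Proposition~\ref{DiamBoundProp}) is elementary: by the avoidance principle with the shrinking family $\set{\partial B_{\sqrt{R^2-2n(t+1)}}}_{t\in[-1,0)}$, every self-shrinker meets $\bar{B}_{\sqrt{2n}}$, so a non-compact connected $\Sigma_i$ meets $\partial B_R$ for every $R\geq\sqrt{2n}$, and since points of $\Sigma_i\cap\partial B_R$ subconverge to a point of $\Sigma\cap\partial B_R$ (no mass is lost in the convergence), $\Sigma$ meets every such sphere and hence is non-compact.

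Your proposed route via a uniform polynomial area lower bound $\mathcal{H}^n(\Sigma_i\cap B_R)\geq cR^n$ is much heavier, and the crucial uniformity is a genuine gap. The ``core radius'' bound you need -- an $i$-independent scale beyond which $\Sigma_i$ is uniformly graphical over $\mathcal{C}(\Sigma_i)$ with controlled constants -- is essentially the content of Corollary~\ref{GraphCor}, which the paper proves \emph{using} Corollary~\ref{CpctnessACSCor}, so your argument is circular unless you establish this independently. Your suggested blow-down along $\yY_i\in\Sigma_i$ with $|\yY_i|\to\infty$ does not obviously do this: dilating $\Sigma_i$ by $|\yY_i|^{-1}$ does not produce a cone for a varying sequence (Lemma~\ref{DimRedConeLem} applies to limits that \emph{are} cones, such as blow-downs of a fixed self-shrinking varifold or tangent cones at infinity), and Brakke/White regularity would only give you graphical control near $\yY_i$ at scale comparable to $|\yY_i|$, not a uniform $R_0$ independent of $i$. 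You would also need a separate justification that the $\Sigma_i$ are connected -- which the avoidance-principle argument above also quietly uses -- and that mass cannot escape to infinity; both are handled automatically once one works with the $\partial B_R$-intersection argument. I would recommend replacing your final paragraph with the much simpler non-compactness-persistence argument.
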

\begin{proof}
Consider a sequence $\Sigma_i\in \mathcal{ACS}_n[\Lambda-\epsilon_0]$ and let $\mu_i=\mu_{\Sigma_i}\in\mathcal{SM}_n[\Lambda-\epsilon_0]$. By the integral compactness theorem for $F$-stationary varifolds, up to passing to a subsequence, $\mu_i\to \mu$ in the sense of Radon measures. Moreover, by the lower semi-continuity of the entropy, $\mu\in \mathcal{SM}_n[\Lambda-\epsilon_0]$. Hence, by Proposition \ref{CondRegProp}, $\mu=\mu_\Sigma$ for $\Sigma\in \mathcal{S}_n[\Lambda-\epsilon_0]$ and so, by Allard's regularity theorem, $\Sigma_i\to \Sigma$ in $C^\infty_{loc}(\Real^{n+1})$. Finally, as each $\Sigma_i$ is non-compact and connected, so is $\Sigma$ and so, by Proposition \ref{CondTotRegProp}, $\Sigma\in \mathcal{ACS}_n[\Lambda-\epsilon_0]$, proving the claim.
\end{proof}

Recall that $\mathcal{C}(\Sigma)$ denotes the asymptotic cone of any $\Sigma\in \mathcal{ACS}_n$. Denote the link of the asymptotic cone by $\mathcal{L}(\Sigma)=\mathcal{C}(\Sigma)\cap \mathbb{S}^n$.
\begin{prop}\label{CpctnessLinksProp}
Fix $n\geq 3$,  $\Lambda \leq \lambda_{n-1}$, and $\epsilon_0>0$.  If both \eqref{Assump1} and \eqref{Assump2} hold, then the set
$$
\mathcal{L}_n[\Lambda-\epsilon_0]=\set{\mathcal{L}(\Sigma): \Sigma\in \mathcal{ACS}_n[\Lambda-\epsilon_0]}
$$
is compact in the $C^\infty(\mathbb{S}^n)$ topology.
\end{prop}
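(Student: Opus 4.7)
The plan is to leverage the compactness of $\mathcal{ACS}_n[\Lambda - \epsilon_0]$ in $C^\infty_{loc}(\Real^{n+1})$ provided by Corollary \ref{CpctnessACSCor}, together with White's local regularity theorem applied to the associated Brakke flows near their terminal time $t = 0$. Starting from a sequence $\Sigma_i \in \mathcal{ACS}_n[\Lambda - \epsilon_0]$, Corollary \ref{CpctnessACSCor} produces, after passing to a subsequence, a limit $\Sigma \in \mathcal{ACS}_n[\Lambda - \epsilon_0]$ with $\Sigma_i \to \Sigma$ in $C^\infty_{loc}(\Real^{n+1})$. The task is then to upgrade this to smooth convergence $\mathcal{L}(\Sigma_i) \to \mathcal{L}(\Sigma)$ in $C^\infty(\mathbb{S}^n)$.

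Let $\mathcal{K}_i$ and $\mathcal{K}$ be the Brakke flows associated to $\Sigma_i$ and $\Sigma$; the $C^\infty_{loc}$ convergence of the shrinkers gives $\mathcal{K}_i \to \mathcal{K}$ as Brakke flows on $(-\infty, 0)$, via the convergence $\mu_{\sqrt{-t}\Sigma_i} \to \mu_{\sqrt{-t}\Sigma}$ for each $t < 0$. For any $\yY \in \mathcal{L}(\Sigma)$, asymptotic conicity of $\Sigma$ yields $\sqrt{-t}\,\Sigma \to \mathcal{C}(\Sigma)$ smoothly in a Euclidean neighborhood of $\yY$ as $t \to 0^-$, so the tangent flow to $\mathcal{K}$ at $(\yY, 0)$ is the static multiplicity-one tangent plane $T_\yY \mathcal{C}(\Sigma)$, giving $\Theta_{(\yY, 0)}(\mathcal{K}) = 1$. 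By joint upper semicontinuity of Gaussian density, for any $\eta > 0$ there is a parabolic neighborhood $P$ of $(\yY, 0)$ and an index $N$ such that $\Theta_{(\yY', t)}(\mathcal{K}_i) < 1 + \eta$ for all $(\yY', t) \in P$ and $i \geq N$. White's local regularity theorem then delivers uniform $C^k$ bounds on the flows $\mathcal{K}_i$ in a smaller parabolic neighborhood of $(\yY, 0)$, and combined with the smooth convergence on time slices $t < 0$, these bounds force the terminal traces $\mathcal{C}(\Sigma_i) = \lim_{t \to 0^-} \sqrt{-t}\,\Sigma_i$ to converge smoothly to $\mathcal{C}(\Sigma)$ in a Euclidean neighborhood of $\yY$, hence $\mathcal{L}(\Sigma_i) \to \mathcal{L}(\Sigma)$ smoothly near $\yY$.

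Covering the compact set $\mathcal{L}(\Sigma) \subset \mathbb{S}^n$ by finitely many such neighborhoods yields $C^k$ convergence on an open neighborhood of $\mathcal{L}(\Sigma)$ in $\mathbb{S}^n$ for every $k$. Stray components of $\mathcal{L}(\Sigma_i)$ escaping outside this neighborhood are ruled out by observing that $\Theta_{(\yY, 0)}(\mathcal{K}) = 0$ for $\yY \in \mathbb{S}^n \setminus \mathcal{L}(\Sigma)$, while the density at any point of $\mathcal{L}(\Sigma_i) \subset \mathcal{C}(\Sigma_i)$ is at least $1$; upper semicontinuity of density therefore forces $\mathcal{L}(\Sigma_i)$ to remain in the tubular neighborhood of $\mathcal{L}(\Sigma)$ where smooth convergence has been established. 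The main technical obstacle is applying White's regularity all the way up to the singular time $t = 0$ of the associated Brakke flow; this succeeds precisely because we work at cone points $\yY \neq \OO$, where the limit flow genuinely behaves like a smooth unit-density flow and the density estimates propagate cleanly from the regular times $t < 0$ to the terminal time by upper semicontinuity.
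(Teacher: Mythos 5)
Your argument is correct and tracks the paper's proof in all essentials: start from Corollary \ref{CpctnessACSCor} to get a $C^\infty_{loc}$-convergent subsequence $\Sigma_i \to \Sigma$, pass to the associated Brakke flows, and upgrade to smooth convergence of the cones using Gaussian density bounds together with White/Brakke regularity near $(\yY,0)$ for $\yY \in \mathcal{L}(\Sigma)$. The paper compresses the density-plus-regularity step into a single appeal to the identity $\mathcal{C}(\Sigma)=\set{\xX:\Theta_{(\xX,0)}(\mathcal{K})\geq 1}$ followed by Brakke's regularity theorem, whereas you unwind it into explicit statements about upper semicontinuity, uniform $C^k$ bounds in past parabolic half-cylinders, and the exclusion of stray components; these added details are accurate and consistent with what the paper implicitly uses.
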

\begin{proof}
Consider a sequence $L_i\in \mathcal{L}_n[\Lambda-\epsilon_0]$ and let $\Sigma_i\in \mathcal{ACS}_n[\Lambda-\epsilon_0]$ be chosen so that $\mathcal{L}(\Sigma_i)=L_i$ (observe that the $\Sigma_i$ are uniquely determined by \cite[Theorem 1.3]{WaRigid}).  By Corollary \ref{CpctnessACSCor}, up to passing to a subsequence, $\Sigma_i\to \Sigma\in \mathcal{ACS}_n[\Lambda-\epsilon_0]$.  We claim that $L_i\to L=\mathcal{L}(\Sigma)$ in $C^\infty(\mathbb{S}^n)$.

To see this, let $\mu_i=\mu_{\Sigma_i}$ and $\mu=\mu_\Sigma$ be the corresponding elements of $\mathcal{SM}_n[\Lambda-\epsilon_0]$ and let $\mathcal{K}_i$ and $\mathcal{K}$ be the associated Brakke flows.  Clearly, $\mu_i\to \mu$ in the sense of measures.  Hence, by construction, the $\mathcal{K}_i$ converge in the sense of Brakke flows to ${\mathcal{K}}$.  Since
$$\mathcal{C}(\Sigma)=\set{\xX\in \Real^{n+1}: \Theta_{(\xX,0)}(\mathcal{K}) \geq 1}$$
and likewise for $\mathcal{C}(\Sigma_i)$, we have by Brakke's regularity theorem that $\mathcal{C}(\Sigma_i)\to \mathcal{C}(\Sigma)$ in $C^\infty_{loc}(\Real^{n+1}\backslash \set{\OO})$, that is $\mathcal{L}(\Sigma_i)\to \mathcal{L}(\Sigma)$ in $C^\infty(\mathbb{S}^n)$ as claimed.
\end{proof}

Let $B_R$ denote the open ball in $\mathbb{R}^{n+1}$ centered at the origin with radius $R$. Combining Corollary \ref{CpctnessACSCor} and Proposition \ref{CpctnessLinksProp} gives that
\begin{cor}\label{GraphCor}
Fix $n\geq 3$,  $\Lambda \leq \lambda_{n-1}$, and $\epsilon_0>0$. Suppose that \eqref{Assump1} and \eqref{Assump2} hold. There is an $R_0=R_0(n, \Lambda, \epsilon_0)$ and $C_0=C_0(n, \Lambda, \epsilon_0)$ so that if $\Sigma \in \mathcal{ACS}_n[\Lambda-\epsilon_0]$, then
\begin{enumerate}
\item $\Sigma\setminus\bar{B}_{R_0}$ is given by the normal graph of a smooth function $u$ over $\mathcal{C}(\Sigma)\setminus\Omega$, where $\Omega$ is a compact set, satisfying that for $p\in\mathcal{C}(\Sigma)\setminus\Omega$,
$$
\abs{\xX(p)} \abs{u(p)}+ \abs{\xX(p)}^2 \abs{\nabla_{\mathcal{C}(\Sigma)} u(p)}+\abs{\xX(p)}^3 \abs{\nabla_{\mathcal{C}(\Sigma)}^2 u(p)}\leq C_0;
$$
\item given $\delta>0$, there is a $\kappa\in (0,1)$ and $\mathcal{R}>1$ depending only on $n,\Lambda, \epsilon_0$ and $\delta$ so that if $p\in\Sigma\setminus B_{\mathcal{R}}$ and $r=\kappa |\xX (p)|$, then $\Sigma\cap B_r(p)$ can be written as a connected graph of a function $v$ over a subset of $T_p\Sigma$ with $|Dv|\leq\delta$. 
\end{enumerate} 
As such, for any $R\geq R_0$, $\Sigma\backslash B_R$ is diffeomorphic to $\mathcal{L}(\Sigma)\times [0, \infty)$.
\end{cor}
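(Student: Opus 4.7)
The plan is to establish all three assertions through a compactness-and-contradiction argument built on Corollary \ref{CpctnessACSCor} and Proposition \ref{CpctnessLinksProp}. For each individual $\Sigma \in \mathcal{ACS}_n$, the existence of $R_0(\Sigma)$ and $C_0(\Sigma)$ for which (1) holds is a standard consequence of the definition of asymptotically conical together with the self-shrinker equation \eqref{SSEqn} and elliptic regularity (cf. the asymptotic analysis in \cite{WaRigid}); the content of the corollary is that these constants may be chosen \emph{uniformly} across $\mathcal{ACS}_n[\Lambda-\epsilon_0]$.

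For item (1), I would argue by contradiction. If no uniform $R_0, C_0$ existed, there would be a sequence $\Sigma_i \in \mathcal{ACS}_n[\Lambda - \epsilon_0]$ and points $p_i \in \Sigma_i$ with $r_i := |\xX(p_i)| \to \infty$ at which either $\Sigma_i$ fails to be a normal graph over $\mathcal{C}(\Sigma_i)$ in a controlled neighborhood of the radial direction through $p_i$, or the scale-invariant $C^2$ bound is violated by a growing amount. By Corollary \ref{CpctnessACSCor}, pass to a subsequence so that $\Sigma_i \to \Sigma_\infty \in \mathcal{ACS}_n[\Lambda - \epsilon_0]$ in $C^\infty_{loc}(\Real^{n+1})$, and by Proposition \ref{CpctnessLinksProp}, $\mathcal{L}(\Sigma_i) \to \mathcal{L}(\Sigma_\infty)$ in $C^\infty(\mathbb{S}^n)$, hence $\mathcal{C}(\Sigma_i) \to \mathcal{C}(\Sigma_\infty)$ in $C^\infty_{loc}(\Real^{n+1}\setminus\{\OO\})$.

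Rescale by $r_i^{-1}$: set $\tilde\Sigma_i = r_i^{-1}\Sigma_i$ and $\tilde p_i = r_i^{-1} p_i \in \mathbb{S}^n$. On any fixed annulus $A_{a,b} = \bar{B}_b \setminus B_a$ around $\OO$, the rescaled shrinker equation forces $|\hH_{\tilde\Sigma_i}|$ to be bounded, because the asymptotically conical structure of $\Sigma_i$ already gives $|\xX^\perp| = O(|\xX|^{-1})$ on $\Sigma_i$ at large scales. Combined with the entropy bound and consequent bounded area ratios, Allard's theorem furnishes uniform $C^{1,\alpha}$ bounds on $\tilde\Sigma_i \cap A_{a,b}$, and PDE bootstrapping upgrades these to uniform $C^k$ bounds for every $k$. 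To identify the limit on the annulus with $\mathcal{C}(\Sigma_\infty)$, I would appeal to Brakke's regularity theorem applied to the shrinking Brakke flows $\mathcal{K}_i$ associated to $\Sigma_i$: the parabolic blow-down of $\mathcal{K}_i$ at $(\OO, 0)$ is precisely the shrinking flow on $\mathcal{C}(\Sigma_i)$, and the smooth convergence $\mathcal{C}(\Sigma_i) \to \mathcal{C}(\Sigma_\infty)$ on annuli lets one diagonalize to conclude $\tilde\Sigma_i \to \mathcal{C}(\Sigma_\infty)$ smoothly on $A_{a,b}$. Translating back to the original scale yields a uniform normal graph representation of $\Sigma_i$ over $\mathcal{C}(\Sigma_i)$ near $p_i$ with uniform scale-invariant $C^2$ control, contradicting the failure assumption.

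Item (2) follows from (1) together with the smooth compactness of links furnished by Proposition \ref{CpctnessLinksProp}, since that provides uniform $C^k$ bounds on $\mathcal{L}(\Sigma)$ and hence a scale-invariant lower bound on the radius in which $\mathcal{C}(\Sigma)$ is locally graphical with slope $\leq \delta/2$; the graph estimate of (1) then transfers this to $\Sigma$, and a compactness-contradiction argument analogous to the one used for (1) keeps the dependence clean. The final diffeomorphism claim is an immediate consequence of (1): the graph over $\mathcal{C}(\Sigma) \setminus \Omega$ supplies a $C^\infty$ diffeomorphism $\Sigma \setminus \bar{B}_{R_0} \cong \mathcal{C}(\Sigma) \setminus \Omega$, while polar coordinates $(r,\theta) \mapsto r\theta$ give $\mathcal{C}(\Sigma) \setminus \Omega \cong \mathcal{L}(\Sigma) \times [0, \infty)$; the analogous statement for any $R \geq R_0$ follows by truncating the graph appropriately.

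The main obstacle I anticipate is identifying, inside the proof of (1), the smooth limit of the rescaled sequence $r_i^{-1}\Sigma_i$ on a fixed annulus with $\mathcal{C}(\Sigma_\infty)$, since each $r_i^{-1} \Sigma_i$ is really a blow-down of a \emph{different} self-shrinker. One has to weave together the two convergences $\Sigma_i \to \Sigma_\infty$ in $C^\infty_{loc}$ and $r^{-1}\Sigma_i \to \mathcal{C}(\Sigma_i)$ (the latter holding for each fixed $i$ as $r \to \infty$). Routing the argument through the associated Brakke flows, and exploiting that $\mathcal{C}(\Sigma_\infty)$ is smooth away from $\OO$ (keeping Gaussian densities uniformly bounded), appears to be the cleanest way to apply Brakke's regularity theorem and extract the desired smooth convergence.
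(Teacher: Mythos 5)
Your overall strategy — compactness via Corollary~\ref{CpctnessACSCor} and Proposition~\ref{CpctnessLinksProp}, then routing through the associated Brakke flows $\mathcal{K}_i$ and Brakke's local regularity theorem — is the same as the paper's, but there is a genuine gap in the middle of your argument for item~(1), and it is placed exactly where you half-anticipate trouble.

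The circular step is the claim that on a fixed annulus "the rescaled shrinker equation forces $|\mathbf{H}_{\tilde\Sigma_i}|$ to be bounded, because the asymptotically conical structure of $\Sigma_i$ already gives $|\xX^\perp|=O(|\xX|^{-1})$ on $\Sigma_i$ at large scales.'' That decay estimate with an $i$-\emph{independent} implied constant \emph{is} item~(1); each individual $\Sigma_i$ certainly has it, but with a constant depending on $i$. The shrinker equation by itself gives only $|\mathbf{H}_{\Sigma_i}|=\tfrac12|\xX^\perp|\le\tfrac12|\xX|$, which under the rescaling $\tilde\Sigma_i=r_i^{-1}\Sigma_i$ produces $|\mathbf{H}_{\tilde\Sigma_i}|\lesssim r_i$ on a fixed annulus, not a bound. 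Relatedly, your appeal to Allard is also shaky: Allard's regularity theorem needs density ratios near $1$, and the entropy bound $\lambda[\Sigma_i]<\tfrac32$ alone does not deliver that.

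The paper resolves both issues in one stroke, and in a different order than you propose. Since the limit $\Sigma$ is asymptotically conical, the associated Brakke flow $\mathcal{K}$ is a \emph{smooth} mean curvature flow on $(B_2\setminus\bar B_1)\times[-1,0]$ with Gaussian density identically $1$ there. Because $\mathcal{K}_i\to\mathcal{K}$ as Brakke flows and Gaussian density is upper semicontinuous, the $\mathcal{K}_i$ have density close to $1$ in that space-time region for $i$ large, so Brakke's (White's) local regularity theorem applies \emph{directly to the flows} and yields a uniform curvature bound on the annulus near time $0$. Undoing the self-similar scaling (each time slice of $\mathcal{K}_i$ is $\sqrt{-t}\,\Sigma_i$) converts this into the scale-invariant decay $\sum_{k\le2}|\xX(p)|^{k+1}|\nabla^k A_{\Sigma_i}(p)|\le C$ for $p\in\Sigma_i\setminus B_R$, \emph{before} any mean curvature or Allard estimate is invoked. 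Items (1) and (2) then follow from \cite[Lemma 2.2]{WaRigid} and \cite[Proposition 4.2]{BernsteinWang2}. In short, you identified the right tool (Brakke regularity applied to the $\mathcal{K}_i$) but used it only to "identify the limit" on the annulus; it should be the mechanism producing the curvature estimate in the first place, replacing your circular use of the shrinker equation.
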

\begin{proof}
For any sequence $\Sigma_i\in\mathcal{ACS}_n[\Lambda-\epsilon_0]$, by Corollary \ref{CpctnessACSCor} and Proposition \ref{CpctnessLinksProp}, up to passing to a subsequence, $\Sigma_i\to\Sigma$ in $C^\infty_{loc} (\Real^{n+1})$ for some $\Sigma\in\mathcal{ACS}_n[\Lambda-\epsilon_0]$, and $\mathcal{L}(\Sigma_i)\to\mathcal{L}(\Sigma)$ in $C^\infty (\mathbb{S}^n)$. Let $\mathcal{K}_i$ and $\mathcal{K}$ be the associated Brakke flows to $\Sigma_i$ and $\Sigma$, respectively. As $\Sigma\in\mathcal{ACS}_n$, $\mathcal{K}\lfloor (B_2\setminus \bar{B}_1)\times [-1,0]$ is a smooth mean curvature flow. Furthermore, since $\mathcal{K}_i\to\mathcal{K}$, it follows from Brakke's local regularity theorem that $\Sigma_i$ have uniform curvature decay, more precisely, there exist $R, C>0$ so that for all $i$ and $p\in\Sigma_i\setminus B_R$, 
$$
\sum_{k=0}^2 \abs{\xX(p)}^{k+1}\abs{\nabla^k_{\Sigma_i} A_{\Sigma_i}(p)}\leq C,
$$
where $A_{\Sigma_i}$ is the second fundamental form of $\Sigma_i$. As the $\mathcal{C}(\Sigma_i)\to\mathcal{C}(\Sigma)$, by \cite[Lemma 2.2]{WaRigid} and \cite[Proposition 4.2]{BernsteinWang2}, there exist $R^\prime, C^\prime>0$ so that Items (1) and (2) in the statement hold for all $\Sigma_i$. This establishes the corollary by the arbitrariness of the $\Sigma_i$.
\end{proof}
Finally, we need the fact that closed self-shrinkers of small entropy have an upper bound on their extrinsic diameter.
\begin{prop}\label{DiamBoundProp}
Fix $n\geq 3$, $\Lambda\leq\lambda_{n-1}$, and $\epsilon_0>0$. Suppose that both \eqref{Assump1} and \eqref{Assump2} hold. Then there is a $R_D=R_D(n,\Lambda, \epsilon_0)$ so that if $\Sigma\in \mathcal{S}_n[\Lambda-\epsilon_0]$ is closed, then $\Sigma\subset \bar{B}_{R_D}$. 
\end{prop}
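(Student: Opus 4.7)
The plan is to argue by contradiction using a compactness argument together with Brakke's local regularity theorem. Suppose no such $R_D$ exists: then there are closed $\Sigma_i\in\mathcal{S}_n[\Lambda-\epsilon_0]$ and points $p_i\in\Sigma_i$ realizing $d_i:=|p_i|=\max_{\Sigma_i}|x|$ with $d_i\to\infty$. At any such maximum one has $p_i^\top=0$ on $\Sigma_i$, so \eqref{SSEqn} forces $|\mathbf{H}_{\Sigma_i}(p_i)|=d_i/2$ and hence $|A_{\Sigma_i}(p_i)|\geq d_i/(2\sqrt n)$. The rescaled surfaces $\hat\Sigma_i:=d_i^{-1}\Sigma_i\subset\bar B_1$ contain $\hat p_i:=d_i^{-1}p_i\in\mathbb{S}^n$ and satisfy $|A_{\hat\Sigma_i}(\hat p_i)|\geq d_i^2/(2\sqrt n)\to\infty$; this blow-up of curvature at a fixed scale will eventually be contradicted.

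Next I would pass to a subsequential limit. Allard's integral compactness theorem for $F$-stationary integral varifolds together with the lower semi-continuity of entropy gives $\mu_{\Sigma_i}\to\mu$ as Radon measures with $\mu\in\mathcal{SM}_n[\Lambda-\epsilon_0]$. Proposition \ref{CondTotRegProp} forces $\mu=\mu_\Sigma$ with $\Sigma$ either diffeomorphic to $\mathbb{S}^n$ or $\Sigma\in\mathcal{ACS}_n[\Lambda-\epsilon_0]$; Allard's regularity then gives $\Sigma_i\to\Sigma$ in $C^\infty_{loc}(\Real^{n+1})$ with multiplicity one, and so the associated Brakke flows satisfy $\mathcal{K}_i\to\mathcal{K}_\Sigma$. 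I would pass to a further subsequence to arrange $\hat p_i\to q\in\mathbb{S}^n$.

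Since $(\hat p_i,-d_i^{-2})$ is a smooth spacetime point of $\mathcal{K}_i$, its Gaussian density equals $1$. Given any fixed $s<0$, Huisken's monotonicity for $\mathcal{K}_i$ yields $\int\Phi_{(\hat p_i,-d_i^{-2})}(\cdot,s)\,d\mu_s^{(i)}\geq 1$ once $i$ is so large that $s<-d_i^{-2}$; varifold convergence at the fixed time $s$ then gives $\int\Phi_{(q,0)}(\cdot,s)\,d\mu_s^{(\Sigma)}\geq 1$, and sending $s\to 0^-$ together with Huisken's monotonicity for $\mathcal{K}_\Sigma$ produces $\Theta_{(q,0)}(\mathcal{K}_\Sigma)\geq 1$. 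In the case where $\Sigma$ is closed, $\sqrt{-t}\Sigma$ collapses to $\{\OO\}$ as $t\to 0^-$ and a direct change-of-variables calculation shows $\Theta_{(q,0)}(\mathcal{K}_\Sigma)=0$ for every $q$ with $|q|>0$; since $|q|=1$ this already gives a contradiction.

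In the remaining case $\Sigma\in\mathcal{ACS}_n[\Lambda-\epsilon_0]$, the analogous computation yields $\Theta_{(q,0)}(\mathcal{K}_\Sigma)=1$ and $q\in\mathcal{L}(\Sigma)$. White's version of Brakke's local regularity theorem applied to $\mathcal{K}_\Sigma$ at $(q,0)$ then produces a backward parabolic neighborhood $P^-_{r_0}(q,0)$ on which $\mathcal{K}_\Sigma$ is a smooth graphical mean curvature flow with $|A|\leq C=C(n,\Lambda)$. Because each $\mathcal{K}_i$ is a smooth multiplicity-one flow with the same uniform entropy bound, the quantitative form of this regularity theorem combined with $\mathcal{K}_i\to\mathcal{K}_\Sigma$ transfers the bound $|A|\leq C$ to $\mathcal{K}_i$ on $P^-_{r_0/2}(q,0)$ for $i$ large. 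But $(\hat p_i,-d_i^{-2})\in P^-_{r_0/2}(q,0)$ for such $i$, so $|A_{\hat\Sigma_i}(\hat p_i)|\leq C$, contradicting the blow-up. The hardest step is precisely this final uniform-in-$i$ transfer of smoothness and curvature control from $\mathcal{K}_\Sigma$ to the $\mathcal{K}_i$ in a common spacetime neighborhood of $(q,0)$; this is where the quantitative form of White's regularity theorem is essential, and it is the reason the classification afforded by Proposition \ref{CondTotRegProp} under \eqref{Assump1} and \eqref{Assump2} is invoked.
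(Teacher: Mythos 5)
Your proposal is correct, but it takes a genuinely different route to the final contradiction. Both proofs open identically: suppose for contradiction that there are closed $\Sigma_i\in\mathcal{S}_n[\Lambda-\epsilon_0]$ with points $p_i$ of norm $d_i\to\infty$, extract a varifold limit $\mu_\Sigma$ via the $F$-stationary compactness theorem, and use Propositions \ref{CondRegProp} and \ref{CondTotRegProp} to conclude that $\Sigma$ is either closed or in $\mathcal{ACS}_n$. From there the paper diverges: it uses an avoidance-principle comparison against a shrinking sphere to force $\Sigma_i\cap\partial B_R\neq\emptyset$ for all large $i$, hence $\Sigma$ is non-compact and asymptotically conical, and then derives the contradiction from the collapsed/non-collapsed dichotomy of \cite{BernsteinWang}: closed self-shrinkers are collapsed (Lemma 4.8 there), asymptotically conical ones are non-collapsed, and Proposition 4.10 of \cite{BernsteinWang} forbids a sequence of the former from converging to the latter. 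You instead exploit that at the farthest point $p_i$ the self-shrinker equation forces $|\mathbf{H}_{\Sigma_i}(p_i)|=d_i/2$, so after rescaling to $\hat\Sigma_i=d_i^{-1}\Sigma_i$ the second fundamental form at $\hat p_i\in\mathbb{S}^n$ blows up; you then propagate the unit Gaussian density of the smooth spacetime points $(\hat p_i,-d_i^{-2})$ of $\mathcal{K}_i$ via Huisken's monotonicity (using the uniform entropy bound to control the tails of the non-compactly-supported heat kernels) to conclude $\Theta_{(q,0)}(\mathcal{K}_\Sigma)\geq 1$ with $|q|=1$. This rules out the closed case outright and, in the conical case, gives density exactly $1$, so White's local regularity theorem yields a uniform curvature bound near $(q,0)$ which transfers to the $\mathcal{K}_i$ and contradicts the blow-up. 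Your argument is more self-contained in that it avoids invoking the collapsed/non-collapsed propositions of \cite{BernsteinWang}, at the cost of being heavier in its final step (the uniform transfer of White's regularity estimate across the convergent sequence of Brakke flows); the paper's proof is shorter because the hard analysis is encapsulated in the cited prior results. Both are valid.
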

\begin{proof}
 We argue by contradiction. If this was not true, then there would be a sequence of $\Sigma_i\in  \mathcal{S}_n[\Lambda-\epsilon_0]$ with the property that there are points $p_i\in \Sigma_i$ with $|p_i|\to \infty$. In particular, for each $R>\sqrt{2n}$, there is an $i_0=i_0(R)$ so that if $i>i_0(R)$, then $\Sigma_i\cap \partial B_R\neq \emptyset$. Indeed, if this was not the case, then the mean curvature flows $\set{\sqrt{-t}\, \Sigma}_{t\in [-1,0)}$ and $\set{\partial B_{\sqrt{R^2-2n(t+1)}}}_{t\in [-1,0)}$ would violate the avoidance principle.
  
Now, let $\mu_i=\mu_{\Sigma_i}\in \mathcal{SM}_n[\Lambda-\epsilon_0]$. By the integral compactness theorem for $F$-stationary varifolds, up to passing to a subsequence the $\mu_i$ converge to a $\mu\in \mathcal{SM}_n[\Lambda-\epsilon_0]$.  By Proposition \ref{CondRegProp}, $\mu=\mu_\Sigma$ for some $\Sigma\in \mathcal{S}_n [\Lambda-\epsilon_0]$. Furthermore, up to passing to a further subspace, $\Sigma_i\to \Sigma$ in $C^\infty_{loc}(\Real^{n+1})$. It follows that $\Sigma \cap \partial B_R\neq \emptyset$ for all $R>\sqrt{2n}$.  In other words, $\Sigma$ is non-compact and so, by Proposition \ref{CondTotRegProp}, $\Sigma\in \mathcal{ACS}_n$. However, this implies that $\Sigma$ is non-collapsed (cf. \cite[Definition 4.6]{BernsteinWang}), while the $\Sigma_i$ are collapsed by \cite[Lemma 4.8]{BernsteinWang}. This contradicts \cite[Proposition 4.10]{BernsteinWang} and completes the proof.
\end{proof}

\section{Singularities of Flows with Small Entropy}\label{SingularitySec}
Given a Brakke flow $\mathcal{K}=\set{\mu_t}_{t\in I}$ and a point $(\xX_0,t_0)\in \sing(\mathcal{K})$ with $t_0\in \mathring{I}$, a tangent flow $\mathcal{T}\in\mathrm{Tan}_{(\xX_0,t_0)}\mathcal{K}$ is of \emph{compact type} if $\mathcal{T}=\set{\nu_t}_{t\in (-\infty,\infty)}$ and $\spt(\nu_{-1})$ is compact. Otherwise, the tangent flow is of \emph{non-compact type}. If every element of $\mathrm{Tan}_{(\xX_0, t_0)}\mathcal{K}$ is of compact type, then $(\xX_0,t_0)$ is a \emph{compact singularity}. Likewise, if every element of $\mathrm{Tan}_{(\xX_0, t_0)}\mathcal{K}$ is of non-compact type, then $(\xX_0,t_0)$ is a \emph{non-compact singularity}. 

For the remainder of this section, we fix a dimension $n\geq 3$ and constants $\Lambda\in (\lambda_n, \lambda_{n-1}]$\footnote{The reader may refer to Remark \ref{LowBndLam} for the reason that we restrict to $\Lambda>\lambda_n$.} and $\epsilon_0>0$, and suppose that both \eqref{Assump1} and \eqref{Assump2} hold. We further assume that $\Sigma_0\subset \Real^{n+1}$ is a closed connected hypersurface with $\lambda[\Sigma_0]\leq\Lambda-\epsilon_0$ and with the property that the level set flow $L(\Sigma_0)$ is non-fattening and that $(E,\mathcal{K})$ is the pair given by Theorem \ref{UnitdensityThm}.  

\begin{prop} \label{RegTanProp}
Let $(\xX_0,t_0)\in \sing(\mathcal{K})$ and $\mathcal{T}\in \mathrm{Tan}_{(\xX_0,t_0)}\mathcal{K}$. If $\mathcal{T}=\set{\nu_t}_{t\in (-\infty,\infty)}$ is of non-compact type, then $\nu_{-1}=\mu_\Sigma$ for some $\Sigma\in\mathcal{ACS}_n$. Moreover, there is a constant $R_1=R_1(n, \Lambda, \epsilon_0)$ so that for all $R\geq R_1$, 
$$\mathcal{T} \lfloor \left(B_{16R}\setminus\bar{B}_{R}\right)\times (-1,1)$$
is a smooth mean curvature flow.  Moreover, for all $\rho\in (R,16R)$ and $t\in (-1,1)$, $\partial B_\rho$ meets $\spt(\nu_t)$ transversally and $\partial B_\rho\cap \spt(\nu_t)$ is connected.
\end{prop}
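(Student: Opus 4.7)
The plan is organized by the three assertions.

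\textbf{Step 1 (identification of $\nu_{-1}$).} By Theorem \ref{BlowupsThm}, $V_{\nu_{-1}}$ is stationary for the $F$-functional, so $\nu_{-1}\in\mathcal{SM}_n$. Huisken's monotonicity applied to $\mathcal{K}$ gives $\lambda[\mathcal{T}]\leq\lambda[\mathcal{K}]\leq\lambda[\Sigma_0]\leq\Lambda-\epsilon_0$, placing $\nu_{-1}$ in $\mathcal{SM}_n(\Lambda)$. Proposition \ref{CondTotRegProp}, available thanks to \eqref{Assump1}, \eqref{Assump2}, and $\Lambda\leq\lambda_{n-1}$, then yields $\nu_{-1}=\mu_\Sigma$ for some $\Sigma\in\mathcal{S}_n(\Lambda)$ that is either closed or asymptotically conical. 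The non-compact type hypothesis rules out the closed case, so $\Sigma\in\mathcal{ACS}_n$.

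\textbf{Step 2 (smoothness on the annular region).} The plan is to verify the hypotheses of White's version of Brakke's local regularity theorem by bounding the Gaussian density of $\mathcal{T}$ close to $1$ throughout the annulus; let $\eta_0>0$ denote the universal threshold in that theorem. Fix $\delta>0$ small (to be chosen depending on $\eta_0,n$) and let $\kappa,\mathcal{R}$ be the corresponding constants from Corollary \ref{GraphCor}(2). For $s\leq 0$ and $|\yY|\geq\mathcal{R}$, $\spt(\nu_s)$ is a smooth multiplicity-one hypersurface near $\yY$ (being $\sqrt{-s}\,\Sigma$ or $\mathcal{C}(\Sigma)$), so $\Theta_{(\yY,s)}(\mathcal{T})\leq 1$. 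For $s\in(0,1)$, apply Huisken's monotonicity from $t=-1$ to bound
\[
\Theta_{(\yY,s)}(\mathcal{T})\leq\int\Phi_{(\yY,s)}(\xX,-1)\,d\mu_\Sigma(\xX).
\]
The integrand concentrates on the scale $\sqrt{s+1}\leq\sqrt{2}$ around $\yY$; for $|\yY|$ large enough that $\sqrt{2}\ll\kappa|\yY|$, the $\delta$-Lipschitz graph structure of $\Sigma$ near $\yY$ from Corollary \ref{GraphCor}(2), combined with a Gaussian tail estimate over $\Sigma\setminus B_{\kappa|\yY|}(\yY)$ that is controlled by the entropy bound, bounds this integral by $1+C(n)\delta^2+o(1)$ as $|\yY|\to\infty$. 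Choosing $\delta$ small and $R_1$ large gives $\Theta_{(\yY,s)}(\mathcal{T})<1+\eta_0$ throughout the annulus. White's theorem then yields smoothness of $\mathcal{T}$ on a parabolic neighborhood of $(\yY,s)$ of spatial radius $\sim\kappa|\yY|$; these neighborhoods cover $(B_{16R}\setminus\bar{B}_R)\times(-1,1)$ for every $R\geq R_1$.

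\textbf{Step 3 (transversality and connectedness).} In the smooth region, $\spt(\nu_t)$ is $C^1$-close to the regular cone $\mathcal{C}(\Sigma)$ on the annulus: for $t<0$, this is Corollary \ref{GraphCor}(1) applied to $\sqrt{-t}\,\Sigma$, while for $t\geq 0$ it follows from Step 2 and smooth dependence of $\spt(\nu_t)$ on $t$. Since $T_p\mathcal{C}(\Sigma)$ contains the radial direction $p/|p|$ at every $p\neq\OO$, $\mathcal{C}(\Sigma)$ meets $\partial B_\rho$ transversally, and $C^1$-closeness transfers transversality to $\spt(\nu_t)$. The slice $\partial B_\rho\cap\spt(\nu_t)$ is then $C^0$-close to $\rho\mathcal{L}(\Sigma)$, so its connected components are in bijection with those of $\mathcal{L}(\Sigma)$. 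The main obstacle is therefore the connectedness of $\mathcal{L}(\Sigma)$; I expect this to follow from the strict inequality $\lambda[\Sigma]\leq\Lambda-\epsilon_0<\lambda_{n-1}$ via a translation/dimension-reduction argument (translating to infinity along one component of a hypothetical disconnected link yields, in the limit, an element of $\mathcal{S}_{n-1}^*(\Lambda)$, contradicting \eqref{Assump2}), or alternatively from the boundary-motion structure of $\mathcal{K}$ inherited from the non-fattening of $L(\Sigma_0)$.
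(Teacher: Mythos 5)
Your Step 1 matches the paper's argument and is correct. Steps 2 and 3 contain genuine issues.

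\textbf{On Step 2.} You replace the paper's use of pseudo-locality (\cite[Theorem 1.5]{IlmanenNevesSchulze}) with a direct Gaussian-density bound fed into White's local regularity theorem. The density estimate itself is plausible: bounding $\int\Phi_{(\yY,s)}(\xX,-1)\,d\mu_\Sigma$ by the graphical structure from Corollary \ref{GraphCor}(2) plus an exponential tail estimate does give a ratio close to $1$ at scale $\sqrt{s+1}$ when $|\yY|$ is large. However, an upper density bound combined with White's theorem gives \emph{smoothness where the flow is present}; it does not by itself rule out $\spt(\nu_t)\cap(B_{16R}\setminus\bar{B}_R)=\emptyset$ for $t>0$. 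A Brakke flow that vanishes abruptly has density zero afterward, which is of course $<1+\eta_0$. Since $\mathcal{T}$ is merely a Brakke-flow limit of rescalings of $\mathcal{K}$, gratuitous disappearance for $t>0$ must be excluded by some mechanism. The paper's mechanism is precisely pseudo-locality: starting from the cone at $t=0$ (itself graphical with small Lipschitz constant on the annulus, by Corollary \ref{GraphCor}), the flow remains a non-empty graph over $\mathcal{C}(\Sigma)$ on a slightly smaller annulus for a uniform forward time, and Brakke's local regularity then upgrades this to the stated smooth flow with $C^2$-small graph. Your Step 3 explicitly assumes the $t>0$ slices are $C^1$-close graphs over the cone, but Step 2 as written does not deliver this. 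You would need to either invoke pseudo-locality as the paper does, or use a structural property of $\mathcal{T}$ (e.g.\ unit-density inherited from Theorem \ref{UnitdensityThm}) to prevent vanishing.

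\textbf{On Step 3.} The transversality argument (radial direction lies in $T_p\mathcal{C}(\Sigma)$, transfer by $C^1$-closeness) is exactly right once non-vanishing is in hand. The connectedness of $\mathcal{L}(\Sigma)$, however, is a substantive fact; the paper cites \cite[Theorem 1.1]{BernsteinWang2}, which establishes it for asymptotically conical self-shrinkers with entropy at most $\lambda_{n-1}$. Your proposed alternative---translating to infinity along one component of a hypothetical disconnected link to produce an element of $\mathcal{S}_{n-1}^*(\Lambda)$---does not work as stated: translating an asymptotically conical self-shrinker along a point of its cone and taking a blow-down limit produces a multiplicity-one hyperplane (precisely because the shrinker is graphical over the cone with decaying error there), not a non-flat $(n-1)$-dimensional shrinker, so no contradiction with \eqref{Assump2} arises. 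The correct ingredient is the Frankel-type separation argument of \cite{BernsteinWang2}, and you should cite it rather than attempt a replacement.
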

\begin{proof}
First, invoking Theorem \ref{BlowupsThm} and the monotonicity formula, $\mathcal{T}$ is backwardly self-similar with respect to parabolic scalings about $(\mathbf{0},0)$ and $\nu_{-1}\in\mathcal{SM}_n [\Lambda-\epsilon_0]$. Furthermore, by Proposition \ref{CondTotRegProp}, we have $\nu_{-1}=\mu_\Sigma$ for some $\Sigma\in\mathcal{ACS}_n[\Lambda-\epsilon_0]$. Finally, by Corollary \ref{GraphCor}, the pseudo-locality property of mean curvature flow \cite[Theorem 1.5]{IlmanenNevesSchulze}\footnote{The proof of \cite[Theorem 1.5]{IlmanenNevesSchulze} uses the local regularity theorem of White, which is also applicable to the Brakke flows in Theorem \ref{UnitdensityThm} and their tangent flows -- see \cite[pp. 1487--1488]{WhiteReg}.} and Brakke's local regularity theorem, there is an $R_1>0$ depending only on $n,\Lambda,\epsilon_0$ so that for $R>R_1$,
$$\mathcal{T} \lfloor \left(B_{16R}\setminus \bar{B}_R\right)\times (-1,1)$$
is a smooth mean curvature flow. Indeed, for all $t\in (-1,1)$, $\spt(\nu_t)\cap \left(B_{16R}\setminus \bar{B}_R\right)$ is the graph of a function over a subset of $\mathcal{C}(\Sigma)$ the asymptotic cone of $\Sigma$ with small $C^2$ norm.  As such, for all $\rho\in (R,16R)$ and $t\in (-1,1)$, $\partial B_\rho$ meets $\spt(\nu_t)$ transversally. As $\lambda[\Sigma]\leq \lambda[\Sigma_0]<\lambda_{n-1}$ it follows from \cite[Theorem 1.1]{BernsteinWang2} that $\mathcal{L}(\Sigma)$, the link of $\mathcal{C}(\Sigma)$, is connected and, hence, so is $\partial B_\rho \cap \spt (\nu_t)$. 
\end{proof}

Next we observe that singularities are either compact or non-compact.
\begin{lem}\label{NonCpctOrCpctLem}
Each $(\xX_0,t_0)\in\sing(\mathcal{K})$ is either a compact or a non-compact singularity.
\end{lem}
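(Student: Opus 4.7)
I argue by contradiction. Suppose $(\xX_0,t_0)$ admits both a compact-type tangent flow $\mathcal{T}_1$ and a non-compact-type tangent flow $\mathcal{T}_2$, with associated blow-up sequences $\rho_i,\sigma_i\to\infty$ so that $\mathcal{K}^{(\xX_0,t_0),\rho_i}\to \mathcal{T}_1$ and $\mathcal{K}^{(\xX_0,t_0),\sigma_i}\to \mathcal{T}_2$. Combining Theorem \ref{BlowupsThm} with Proposition \ref{CondTotRegProp} shows that each tangent flow has the form $\{\mu_{\sqrt{-t}\,\Sigma}\}_{t<0}$ with $\Sigma\in\mathcal{S}_n[\Lambda-\epsilon_0]$ either closed (diffeomorphic to $\mathbb{S}^n$) or in $\mathcal{ACS}_n[\Lambda-\epsilon_0]$. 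Thus $\mathcal{T}_1$ corresponds to a closed $\Sigma_1$ and $\mathcal{T}_2$ to some $\Sigma_2\in \mathcal{ACS}_n[\Lambda-\epsilon_0]$.

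\textbf{Step 1: a separating test function.} By Proposition \ref{DiamBoundProp}, every closed $\Sigma\in\mathcal{S}_n[\Lambda-\epsilon_0]$ lies in $\bar{B}_{R_D}$ for some $R_D=R_D(n,\Lambda,\epsilon_0)$. On the other hand, Corollary \ref{GraphCor} together with the compactness of $\mathcal{L}_n[\Lambda-\epsilon_0]$ (Proposition \ref{CpctnessLinksProp}) gives uniform constants $R_\star>R_D+1$ and $c_\star>0$, depending only on $n,\Lambda,\epsilon_0$, such that $\mu_\Sigma(B_{2R_\star}\setminus\bar{B}_{R_\star+1})\geq c_\star$ for every $\Sigma\in\mathcal{ACS}_n[\Lambda-\epsilon_0]$ (since each such $\Sigma$ is a small graph over its asymptotic cone beyond $R_\star$ and the links have uniformly positive $(n-1)$-measure). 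Pick a radial $\phi\in C^\infty_c(\Real^{n+1})$ with $0\leq\phi\leq 1$, $\phi\equiv 1$ on $B_{2R_\star}\setminus\bar{B}_{R_\star+1}$, and $\spt(\phi)\subset B_{3R_\star}\setminus\bar{B}_{R_\star}$. Then
\[
\int\phi\,d\mu_{\Sigma}=0 \text{ for all closed } \Sigma\in\mathcal{S}_n[\Lambda-\epsilon_0], \quad \int\phi\,d\mu_\Sigma\geq c_\star \text{ for all } \Sigma\in\mathcal{ACS}_n[\Lambda-\epsilon_0].
\]

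\textbf{Step 2: continuity of the rescaled mass.} Define $h:(0,\infty)\to[0,\infty)$ by $h(\rho)=\int\phi\,d\mu^{\xX_0,\rho}_{t_0-\rho^{-2}}$. Rewriting as $h(\rho)=\int \rho^n\phi(\rho(\xX-\xX_0))\,d\mu_{t_0-\rho^{-2}}(\xX)$, one sees that $h$ is weakly continuous in $\rho$ provided $t\mapsto \mu_t$ is weakly continuous in $t$ (for $t$ in a neighborhood of $t_0$ in which we may sample from a full-measure subset). This is the key technical input: for the canonical unit-regular Brakke flow associated to the non-fattening level set flow $L(\Sigma_0)$ of Theorem \ref{UnitdensityThm}, the measures $\mu_t=\mathcal{H}^n\lfloor\partial^* E_t$ are weakly continuous in $t$ up to a countable exceptional set of times, and the argument below can be carried out restricting $\rho$ to values where $t_0-\rho^{-2}$ avoids this exceptional set.

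\textbf{Step 3: contradiction via IVT.} By the convergence to tangent flows at time $-1$, we have $h(\rho_i)\to \int\phi\,d\mu_{\Sigma_1}=0$ and $h(\sigma_i)\to \int\phi\,d\mu_{\Sigma_2}\geq c_\star$. Applying the intermediate value theorem to the (essentially) continuous $h$ on a suitable diverging subinterval, we produce $\tau_k\to\infty$ with $h(\tau_k)\to c_\star/2$. By Theorem \ref{BlowupsThm} applied along $\tau_k$, we extract a subsequence so that $\mathcal{K}^{(\xX_0,t_0),\tau_k}\to \mathcal{T}_3$ for some $\mathcal{T}_3\in\mathrm{Tan}_{(\xX_0,t_0)}\mathcal{K}$, and the continuity of integration against $\phi$ against the weakly convergent $\nu_{-1}^{(k)}$ forces $\int\phi\,d\nu_{-1}^{(3)}=c_\star/2\in (0,c_\star)$. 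But $\mathcal{T}_3$ must again be either compact type (giving value $0$) or non-compact type (giving value at least $c_\star$), a contradiction. The chief obstacle, justifying the continuity or ``no-jump'' property of $t\mapsto\mu_t$ in Step 2, is precisely the one point where one leans heavily on the specific structure of the Ilmanen flow from Theorem \ref{UnitdensityThm}; without it, one would have to replace the IVT step with a softer connectedness argument for the cluster set $\mathrm{Tan}_{(\xX_0,t_0)}\mathcal{K}$ directly.
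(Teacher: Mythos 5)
Your argument is genuinely different from the paper's, and it has a gap that you partially acknowledge but do not close. The paper's proof is much shorter and more robust: it observes that if $(\xX_0,t_0)$ has one tangent flow of compact type, then by Proposition \ref{CondTotRegProp} the corresponding self-shrinker is a smooth closed hypersurface, and it then invokes Schulze's uniqueness theorem for compact tangent flows \cite[Corollary 1.2]{Schulze} to conclude that this is the \emph{only} tangent flow at $(\xX_0,t_0)$. In particular, the dichotomy is an immediate corollary of a strong uniqueness result, and no continuity argument for $\rho\mapsto h(\rho)$ is needed at all.

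Your Step 1 is sound (and a nice construction), and Step 3 would be fine if Step 2 were. But Step 2 is precisely where the argument breaks. In a Brakke flow the mass function $t\mapsto\int\phi\,d\mu_t$ is not continuous: it can drop discontinuously, and this is an essential feature of the theory (sudden extinction, quasi-static behavior, etc.). Knowing that the discontinuity set is at most countable does not rescue the intermediate value theorem, since $h$ could jump across $c_\star/2$ without ever taking it; restricting $\rho$ to a co-countable set cannot fix this either, because IVT genuinely needs continuity on an interval. Moreover, the special structure from Theorem \ref{UnitdensityThm} (the measures being $\mathcal{H}^n\lfloor\partial^*E_t$) does not give weak continuity in $t$; it gives upper semicontinuity of Gaussian densities and a BV-type structure, but mass can still be lost instantaneously. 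The alternative you gesture at---a connectedness argument for the set $\mathrm{Tan}_{(\xX_0,t_0)}\mathcal{K}$---is exactly the kind of statement that is delicate for Brakke flows and is not something you can appeal to as folklore; indeed, establishing it would require its own argument, likely again running into the same discontinuity issue. The moral is that the soft topological strategy that works for tangent cones of stationary varifolds (where density is a continuous function of scale) does not transfer cleanly to the parabolic setting, which is why the paper reaches for the much heavier machinery of \L{}ojasiewicz--Simon-based uniqueness (Schulze) rather than a continuity-of-blowup argument.
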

\begin{proof}
Suppose that $(\xX_0,t_0)$ is not a non-compact singularity. Then there is a $\mathcal{T}=\{\nu_t\}_{t\in\Real}\in\mathrm{Tan}_{(\xX_0,t_0)}\mathcal{K}$ of compact type. By the monotonicity formula and Theorem \ref{BlowupsThm}, $\nu_{-1}\in\mathcal{SM}_n[\Lambda-\epsilon_0]$. 
It follows from Proposition \ref{CondTotRegProp} that $\nu_{-1}=\mu_\Sigma$ for some $\Sigma\in\mathcal{S}_n[\Lambda-\epsilon_0]$ and $\Sigma$ is closed. Hence, by \cite[Corollary 1.2]{Schulze}, $\mathcal{T}$ is the only element of $\mathrm{Tan}_{(\xX_0,t_0)}\mathcal{K}$ and so  $(\xX_0,t_0)$ is a compact singularity, proving the claim.
\end{proof}

We further prove that
\begin{thm}\label{IsolatedSingThm}
Given $(\xX_0, t_0)\in \sing(\mathcal{K})$, there exist $\rho_0=\rho_0(\xX_0,t_0, \mathcal{K})>0$ and $\alpha=\alpha(n,\Lambda,\epsilon_0)>1$ so that:
\begin{enumerate}
\item If $(\xX_0,t_0)$ is a compact singularity and $\rho<\rho_0$, then
$$\mathcal{K}\lfloor \left(B_{2\alpha \rho}(\xX_0)\times (t_0-4\alpha^2\rho^2, t_0+4\alpha^2\rho^2) \backslash \set{(\xX_0,t_0)} \right) $$
is a smooth mean curvature flow. Furthermore, for all $R\in (\frac{1}{2}\alpha \rho, 2\alpha \rho)$ and  $t\in (t_0 -\rho^2, t_0+\rho^2)$,  $\spt(\mu_t)\cap \partial B_{R}(\xX_0)=\emptyset$.
\item If $(\xX_0,t_0)$ is a non-compact singularity and $\rho<\rho_0$, then
$$\mathcal{K}\lfloor \left(B_{2\alpha \rho}(\xX_0)\times (t_0-4\alpha^2\rho^2, t_0] \backslash \set{(\xX_0,t_0)} \right) $$
and
$$\mathcal{K}\lfloor \left(B_{2\alpha\rho}(\xX_0)\backslash \bar{B}_{\frac{1}{2}\alpha\rho}(\xX_0)\right)\times (t_0-\rho^2, t_0+\rho^2)$$
are both smooth mean curvature flows.  Furthermore, for all  $R\in (\frac{1}{2}\alpha \rho, 2\alpha \rho)$ and $t\in (t_0 -\rho^2, t_0+\rho^2)$,  $\partial B_{R}(\xX_0)$ meets $\spt(\mu_t)$ transversally and the intersection is connected. 
\end{enumerate}
Finally, for all $t\in (t_0-\rho^2, t_0)$, $\spt(\mu_t)\cap \bar{B}_{\alpha \rho}(\xX_0)$ is diffeomorphic (possibly as a manifold with boundary) to $\Gamma\cap \bar{B}_{\alpha}$, where $\Gamma\in \mathcal{S}_n^*[\Lambda-\epsilon_0]$ and, if $\Gamma\in \mathcal{ACS}_n$, then $\Gamma\backslash B_\alpha$ is diffeomorphic to $\mathcal{L}(\Gamma)\times[0,\infty)$.
\end{thm}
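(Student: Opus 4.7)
The plan is to transfer structural information from tangent flows of $\mathcal{K}$ at $(\xX_0,t_0)$ to the flow $\mathcal{K}$ itself, using White's version of Brakke's local regularity theorem as the transfer mechanism. By Lemma \ref{NonCpctOrCpctLem}, the singularity is either compact or non-compact, and I treat both cases in parallel.

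Fix $\mathcal{T}=\{\nu_t\}_{t\in\Real}\in\mathrm{Tan}_{(\xX_0,t_0)}\mathcal{K}$. By Proposition \ref{CondTotRegProp}, $\nu_{-1}=\mu_\Gamma$ for some $\Gamma\in\mathcal{S}_n^*[\Lambda-\epsilon_0]$ that is either closed (and diffeomorphic to $\mathbb{S}^n$) or asymptotically conical. In the compact case, Proposition \ref{DiamBoundProp} gives $\Gamma\subset\bar{B}_{R_D}$, so choosing $\alpha\geq 4R_D$ forces $\spt(\nu_t)\subset B_{\alpha/4}$ for $t\in[-4\alpha^2,0)$ and $\spt(\nu_t)=\emptyset$ for $t\geq 0$; moreover, by the uniqueness result \cite[Corollary 1.2]{Schulze} already invoked in Lemma \ref{NonCpctOrCpctLem}, $\mathcal{T}$ is the \emph{unique} tangent flow at $(\xX_0,t_0)$. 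In the non-compact case, Proposition \ref{RegTanProp} supplies $R_1=R_1(n,\Lambda,\epsilon_0)$ so that for $R\geq R_1$ the tangent flow is smoothly graphical over $\mathcal{C}(\Gamma)$ on $(B_{16R}\setminus\bar{B}_R)\times(-1,1)$ with transverse connected spherical cross-sections; setting $\alpha\geq 8R_1$, this annular description is \emph{uniform} across every subsequential tangent flow thanks to Corollary \ref{CpctnessACSCor} and Proposition \ref{CpctnessLinksProp}.

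A standard contradiction/subsequence argument then produces $\rho_0>0$, depending on $(\xX_0,t_0,\mathcal{K})$, such that for every $\rho<\rho_0$ the rescaled flow $\mathcal{K}^{(\xX_0,t_0),1/\rho}$ is Brakke-close to some tangent flow on the relevant parabolic domains. White's local regularity theorem upgrades this proximity to smooth convergence on every subregion where the tangent flow is smooth. Combined with the uniform annular structure above, this delivers both the smoothness statements and the transversality/connectedness statements of (1) and (2). For the past parabolic cylinder $B_{2\alpha\rho}(\xX_0)\times(t_0-4\alpha^2\rho^2,t_0]$ in case (2), one applies the regularity theorem pointwise using that every point of the tangent flow in that parabolic region other than $(\OO,0)$ is a smooth point (which follows from $\Gamma\in\mathcal{ACS}_n$). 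In case (1) the same reasoning, together with the containment $\spt(\nu_t)\subset B_{\alpha/4}$, yields the vanishing of $\spt(\mu_t)\cap\partial B_R(\xX_0)$ for $R\in(\tfrac{1}{2}\alpha\rho,2\alpha\rho)$ and $t\in(t_0-\rho^2,t_0+\rho^2)$.

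For the final diffeomorphism assertion, fix $\rho<\rho_0$, pick $t_i\nearrow t_0$ with $t_0-t_i\ll\rho^2$, and rescale about $(\xX_0,t_0)$ by $(t_0-t_i)^{-1/2}$. Along a subsequence these rescalings converge to a tangent flow whose time $-1$ slice is $\mu_\Gamma$. Smooth convergence on $\bar{B}_\alpha$ (compact case) or on $\bar{B}_\alpha\setminus B_{\alpha/4}$ combined with the graphical decay from Corollary \ref{GraphCor} (non-compact case) yields, for $i$ large, a diffeomorphism between the rescaled copy of $\spt(\mu_{t_i})\cap\bar{B}_{\alpha(t_0-t_i)^{1/2}}(\xX_0)$ and $\Gamma\cap\bar{B}_\alpha$. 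Because the flow is smooth on the spatial annulus for every $t\in(t_0-\rho^2,t_0+\rho^2)$ and $\partial B_{\alpha\rho}(\xX_0)\cap\spt(\mu_t)$ is transverse and connected, the diffeomorphism type cannot change in $t$, so the identification persists for all $t\in(t_0-\rho^2,t_0)$; the final statement $\Gamma\setminus B_\alpha\cong\mathcal{L}(\Gamma)\times[0,\infty)$ is immediate from Corollary \ref{GraphCor}. The main technical obstacle is the possible non-uniqueness of tangent flows in the non-compact case, and it is precisely the uniformity extracted from the compactness of $\mathcal{ACS}_n[\Lambda-\epsilon_0]$ and its links that circumvents it.
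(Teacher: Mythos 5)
Your strategy---use the compactness results of Section \ref{RegularitySec} (Corollary \ref{CpctnessACSCor}, Propositions \ref{CpctnessLinksProp}, \ref{DiamBoundProp}, \ref{RegTanProp}) to get a uniform $\alpha$, transfer regularity from tangent flows to $\mathcal{K}$ via White's version of Brakke's local regularity theorem, and upgrade the snapshot diffeomorphism to all $t\in(t_0-\rho^2,t_0)$ using the flow (Appendix A)---is exactly the paper's, and the overall outline is sound. However, the central step is glossed over in a way that hides the real work. To show the flow is smooth throughout the deleted parabolic cylinder $B_{2\alpha\rho}(\xX_0)\times (t_0-4\alpha^2\rho^2, t_0]\setminus\set{(\xX_0,t_0)}$ (with a single $\rho_0$), the paper argues by contradiction: assuming singular points $(\xX_i,t_i)\to(\xX_0,t_0)$ with $t_i\leq t_0$, one rescales by the \emph{parabolic distance} $r_i=(|\xX_i-\xX_0|^2+|t_i-t_0|)^{1/2}$, so that the rescaled singular points land on the unit parabolic sphere. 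Since every subsequential tangent flow is regular there (using Proposition \ref{CondTotRegProp} for $\tilde t_0<0$ and Proposition \ref{RegTanProp} for the borderline $\tilde t_0=0$, $|\tilde\xX_0|=1$), White's theorem then yields regularity of $(\xX_i,t_i)$ for large $i$, a contradiction. Simply ``applying the regularity theorem pointwise'' at the fixed rescaling $1/\rho$, as you propose, only gives smoothness on a \emph{fixed compact subset} of the deleted cylinder (with $\rho_0$ depending on that subset), because the regularity scale of the tangent flow degenerates as one approaches the space-time origin; the dynamic rescaling along the parabolic distance, together with self-similarity of tangent flows, is what produces a single $\rho_0$.

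Two smaller remarks. In the compact case, the containment $\spt(\nu_t)\subset B_{\alpha/4}$ holds only for $t\in(-1,0)$, not on all of $[-4\alpha^2,0)$ as you assert, since $\spt(\nu_t)\subset B_{R_D\sqrt{-t}}$; this suffices because the emptiness claim of Item (1) rescales precisely to $t\in(-1,1)$, but the stated interval should be corrected. Also, in the non-compact case the theorem deliberately asserts smoothness only on the \emph{backward} cylinder $B_{2\alpha\rho}(\xX_0)\times (t_0-4\alpha^2\rho^2, t_0]$ (plus the full annular cylinder); your contradiction argument must restrict to $t_i\leq t_0$ accordingly, since forward-in-time regularity inside the full ball is not claimed and cannot be obtained from the tangent flow, whose $t>0$ behavior is not controlled at $\xX\neq\OO$ near the axis.
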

\begin{proof}
Set $\alpha=4\max\set{R_1, R_D,1}$ where $R_1$ is given by Proposition \ref{RegTanProp} and $R_D$ is given by Proposition \ref{DiamBoundProp}.  Without loss of generality, we may assume that $(\xX_0,t_0)=(\mathbf{0},0)$.

We establish the regularity near (but not at) $(\OO,0)$ by contradiction.  To that end, suppose that there was a sequence of points  $(\xX_i,t_i)\in \sing(\mathcal{K})\backslash \set{(\OO, 0)}$ such that $(\xX_i,t_i)\to (\mathbf{0},0)$. If $(\mathbf{0},0)$ is a non-compact singularity, we further assume $t_i\leq 0$. Let $r_i^2=|\xX_i|^2+|t_i|$. Then, up to passing to a subsequence, it follows from Theorem \ref{BlowupsThm} that $\mathcal{K}^{(\mathbf{0},0),r_i}\to\mathcal{T}$ in the sense of Brakke flows and $\mathcal{T}=\{\nu_t\}_{t\in\Real}\in\mathrm{Tan}_{(\mathbf{0},0)}\mathcal{K}$. Let $\tilde{\xX}_i=r^{-1}_i \xX_i$ and $\tilde{t}_i=r^{-2}_i t_i$. Then $|\tilde{\xX}_i|^2+|\tilde{t}_i|=1$, that is, $(\tilde{\xX}_i,\tilde{t}_i)$ lies on the unit parabolic sphere in space-time. Thus, up to passing to a subsequence, $(\tilde{\xX}_i,\tilde{t}_i)\to (\tilde{\xX}_0,\tilde{t}_0)$, where $|\tilde{\xX}_0|^2+|\tilde{t}_0|=1$. Moreover, the upper semi-continuity of Gaussian density implies that $\Theta_{(\tilde{\xX}_0,\tilde{t}_0)}(\mathcal{T})\geq 1$.  
 
As $\nu_{-1}\in \mathcal{SM}_n [\Lambda-\epsilon_0]$, Proposition \ref{CondTotRegProp} implies that $\sing(\nu_t)=\emptyset$ for $t<0$. That is, $(\tilde{\xX}_0,\tilde{t}_0)$ is a regular point of $\mathcal{T}$ if $\tilde{t}_0<0$.
If $(\mathbf{0},0)$ is a non-compact singularity, then $\mathcal{T}$ is of non-compact type and $\tilde{t}_0\leq 0$.  Hence, either $(\tilde{\xX}_0,\tilde{t}_0)$ is a regular point or $\tilde{t}_0=0$ and $|\tilde{\xX}_0|=1$.  However in the later case, Proposition \ref{RegTanProp} applied to $ \mathcal{T}^{(\mathbf{0}, 0), \alpha}\in\mathrm{Tan}_{(\mathbf{0},0)}\mathcal{K}$ implies that $(\tilde{\xX}_0,\tilde{t}_0)$ is also a regular point of $\mathcal{T}$. 
If $(\mathbf{0},0)$ is a compact singularity, then $\mathcal{T}$ is of compact type and $\nu_{-1}=\mu_\Gamma$ for some $\Gamma\in\mathcal{S}_n(\Lambda)$ by Proposition \ref{CondTotRegProp}. This implies that $\mathcal{T}$ is extinct at time $0$ and $\sing(\mathcal{T})=\{(\mathbf{0},0)\}$, again implying that $\tilde{t}_0\leq 0$ and $(\tilde{\xX}_0,\tilde{t}_0)$ is a regular point of $\mathcal{T}$.
Hence, it follows from Brakke's local regularity theorem that for all $i$ sufficiently large, $(\tilde{\xX}_i,\tilde{t}_i)\notin\sing(\mathcal{K}^{(\mathbf{0},0),r_i})$, or equivalently, $(\xX_i,t_i)\notin\sing(\mathcal{K})$. This is the desired contradiction. Therefore, for $\rho_0'>0$ sufficiently small, if $\rho<\rho_0'$  and $(\mathbf{0},0)$ is a non-compact singularity, then
$$
\mathcal{K} \lfloor \left(B_{2\alpha\rho}\times (-4\alpha^2\rho^2,0]\setminus\{(\mathbf{0},0)\}\right)
$$
is a smooth mean curvature flow, while, if $\rho<\rho_0'$ and $(\mathbf{0},0)$ is a compact singularity, then
$$
\mathcal{K} \lfloor \left(B_{2\alpha\rho}\times (-4\alpha^2 \rho^2,4\alpha^2 \rho^2)\setminus\{(\mathbf{0},0)\}\right)
$$
is a smooth mean curvature flow.

We continue arguing by contradiction and again consider a sequence, $\rho_i$, of positive numbers with $\rho_i\to 0$ and $\rho_i<\rho_0'$. Up to passing to a subsequence, $\mathcal{K}^{(\mathbf{0},0),\rho_i}$ converges, in the sense of Brakke flows, to some $\mathcal{T}=\{\nu_t\}_{t\in\Real}\in\mathrm{Tan}_{(\mathbf{0},0)}\mathcal{K}$.  If $(\OO,0)$ is a compact singularity, then, as $\alpha\geq 4 R_D$, $\partial B_R \cap \spt(\nu_{t})=\emptyset$ for $R\geq\frac{1}{2}\alpha$ and $t\in (-1,1)$ by Proposition \ref{DiamBoundProp} and the avoidance principle. Hence, the nature of the convergence implies that, for $\rho_i$ sufficiently large, $\partial B_{R}\cap \spt(\mu_t)=\emptyset$ for $t\in (-\rho^2_i,\rho^2_i)$ and $R\in (\frac{1}{2} \alpha \rho_i, 2\alpha \rho_i)$.  
If $(\OO, 0)$ is a non-compact singularity, then Proposition \ref{RegTanProp},  implies that
$$
\mathcal{T} \lfloor \left(B_{4\alpha}\setminus \bar{B}_{\frac{1}{4}\alpha}\right)\times (-1,1)
$$
is a smooth mean curvature flow and for all $R\in (\frac{1}{4}\alpha,4\alpha)$ and $t\in (-1,1)$, $\partial B_R$ meets $\spt(\nu_t)$ transversally and as a connected set. Thus, by Brakke's local regularity theorem, for all $i$ sufficiently large,
$$
\mathcal{K}^{(\mathbf{0},0),\rho_i} \lfloor \left(B_{2\alpha}\setminus\bar{B}_{\frac{1}{2} \alpha}\right)\times (-1,1)
$$
is a smooth mean curvature flow, and hence so is
$$
\mathcal{K} \lfloor \left(B_{2\alpha \rho_i}\setminus\bar{B}_{\frac{1}{2}\alpha \rho_i}\right) \times (-\rho_i^2,\rho_i^2).
$$
Moreover, for all $R\in (\frac{1}{2}\alpha \rho_i,2\alpha\rho_i )$ and $t\in (-\rho_i^2,\rho_i^2)$, $\partial B_R$ meets $\mu_t$ transversally and as a connected set.
Hence, as the sequence $\rho_i$ was arbitrary, there is a $\rho_0''<\rho_0'$ so that Items (1) and (2) hold for $\rho<\rho_0''$.  

To complete the proof, we observe that again arguing by contradiction, there is a $\rho_0<\rho_0''$ 
so that if $\rho<\rho_0$,  $B_{2\alpha }\cap \rho^{-1} \spt(\mu_{-\rho^2})$ is a normal graph over a domain $\Omega$ in $\Gamma$ with small $C^2$ norm for some $\Gamma\in \mathcal{S}_n [\Lambda-\epsilon_0]$. In particular, by Corollary \ref{GraphCor}, $\partial\Omega$ is a small normal graph over $\partial B_{\alpha} \cap \Gamma$, so $\bar{B}_{\alpha\rho }\cap \spt(\mu_{-\rho^2})$ is diffeomorphic to $\bar{B}_\alpha \cap \Gamma$.  Furthermore, the choice of $\alpha$ ensures that if $\Gamma\in \mathcal{ACS}_n$, then $\Gamma\backslash B_{\alpha}$ is diffeomorphic to $\mathcal{L}(\Sigma)\times [0, \infty)$.
It remains only to show that $\bar{B}_{\alpha\rho }\cap \spt(\mu_{t})$ is diffeomorphic to $\bar{B}_\alpha \cap \Gamma$ for $t\in (-\rho^2, 0)$. This follows from the fact that, as already established, the flow is smooth in $\bar{B}_{2\alpha \rho} \times [-2\rho^2, 0)$ and, for all $t\in [-\rho^2,0)$, either $\partial B_{\alpha\rho} \cap \spt(\mu_t)=\emptyset$ (if the singularity is compact) or the intersection is transverse (if the singularity is non-compact).  As such, the flow provides a diffeomorphism between $\bar{B}_{\alpha\rho }\cap \spt(\mu_{t})$ and $\bar{B}_{\alpha\rho }\cap \spt(\mu_{-\rho^2})$ -- see Appendix A.
\end{proof}

We obtain a direct consequence of Theorem \ref{IsolatedSingThm}.
\begin{cor}\label{TimeSingCor}
For each $t_0>0$, $\sing_{t_0}(\mathcal{K})=\set{\xX: (\xX,t_0)\in \sing(\mathcal{K})}$ is finite.
\end{cor}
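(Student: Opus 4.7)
The plan is to combine Theorem \ref{IsolatedSingThm}, which forces singular points to be spatially isolated within each time-slice, with the boundedness of $\spt(\mu_{t_0})$. A closed, bounded, discrete subset of $\Real^{n+1}$ is compact and discrete, hence finite.

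First I would establish discreteness of $\sing_{t_0}(\mathcal{K})$. Given $(\xX_0, t_0) \in \sing(\mathcal{K})$, Theorem \ref{IsolatedSingThm} supplies $\rho_0 > 0$ so that, for any $\rho < \rho_0$, the flow $\mathcal{K}$ restricted to $B_{2\alpha \rho}(\xX_0) \times (t_0 - 4\alpha^2 \rho^2, t_0] \setminus \set{(\xX_0, t_0)}$ is a smooth mean curvature flow (both the compact and non-compact cases of that theorem include this space-time region). In particular, every $(\xX, t_0)$ with $\xX \in B_{2\alpha\rho}(\xX_0) \setminus \set{\xX_0}$ is regular, so $\xX_0$ is isolated inside $\sing_{t_0}(\mathcal{K})$. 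Hence the time-slice is a discrete subset of $\Real^{n+1}$.

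Next I would observe that $\sing_{t_0}(\mathcal{K})$ is also closed and bounded. Closedness follows because $\reg(\mathcal{K})$ is open by definition, so $\sing(\mathcal{K})$ is closed in the ambient space-time, and its restriction to $\Real^{n+1} \times \set{t_0}$ is closed in $\Real^{n+1}$. For boundedness, since $\Sigma_0$ is closed it lies in some $\bar{B}_{R_0}$, and comparing $\mathcal{K}$ with the shrinking-sphere mean curvature flow $\{\partial B_{\sqrt{R_0^2 - 2ns}}\}_{s \geq 0}$ via the avoidance principle yields $\spt(\mu_{t_0}) \subset \bar{B}_{\sqrt{R_0^2 - 2n t_0}}$ whenever $t_0 \leq R_0^2/(2n)$, and $\spt(\mu_{t_0}) = \emptyset$ otherwise. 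Either way, $\sing_{t_0}(\mathcal{K}) \subset \spt(\mu_{t_0})$ is bounded.

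Combining these three properties, $\sing_{t_0}(\mathcal{K})$ is a closed, bounded, discrete subset of $\Real^{n+1}$, hence compact and discrete, and therefore finite. I do not anticipate any serious obstacle here: Theorem \ref{IsolatedSingThm} has already done the analytic work of separating singular points, and what remains is a standard point-set argument combined with the elementary observation that a weak mean curvature flow starting from a compact initial hypersurface has spatially bounded support at every positive time.
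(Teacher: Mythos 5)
Your approach is the intended one: the paper states the corollary as a direct consequence of Theorem \ref{IsolatedSingThm} and gives no further argument, so your job was to fill in the discreteness plus compactness argument, and you have done that correctly. Both cases of Theorem \ref{IsolatedSingThm} yield, in particular, that the flow is smooth on a backward parabolic neighborhood of $(\xX_0,t_0)$ punctured at $(\xX_0,t_0)$, and since that neighborhood contains $\left(B_{2\alpha\rho}(\xX_0)\setminus\set{\xX_0}\right)\times\set{t_0}$, each singular point at time $t_0$ is isolated in $\sing_{t_0}(\mathcal{K})$; closedness and boundedness then finish it.

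The one imprecision is the inclusion $\sing_{t_0}(\mathcal{K})\subset\spt(\mu_{t_0})$. For a Brakke flow the measure $\mu_{t_0}$ can drop suddenly: at the extinction time $T(\mathcal{K})$, for example, $\spt(\mu_{T})=\emptyset$ even though $\sing_{T}(\mathcal{K})\neq\emptyset$, and more generally a compact component can vanish at $t_0$ leaving a singular point outside $\spt(\mu_{t_0})$. The fix is cheap: a point $\xX$ with $\Theta_{(\xX,t_0)}(\mathcal{K})\geq 1$ must be a limit of points of $\spt(\mu_{t})$ as $t\to t_0^-$, and the avoidance principle gives $\spt(\mu_t)\subset\bar{B}_{\sqrt{R_0^2-2nt}}$ for all $t\in[0,t_0)$, so $\sing_{t_0}(\mathcal{K})\subset\bar{B}_{\sqrt{R_0^2-2nt_0}}$ directly; one need not pass through $\spt(\mu_{t_0})$. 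With that adjustment the argument is complete and matches the paper's intent.
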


Given a manifold $M$ we say a subset $U\subset M$ is a \emph{smooth domain} if $U$ is open and $\partial U$ is a smooth submanifold. 
\begin{thm}\label{CondSurgThm}
There is an $N=N(\Sigma_0)\in\mathbb{N}$ and a sequence of closed connected hypersurfaces $\Sigma^1, \ldots, \Sigma^N$ so that:
	\begin{enumerate}
		\item $\Sigma^1=\Sigma_0$;
		\item $\Sigma^N$ is diffeomorphic to $\mathbb{S}^n$;
		\item For each $i$ with $1\leq i \leq N-1$, there is an $m=m(i)\in \mathbb{N}$ and open connected pairwise disjoint smooth domains $U_1^i, \ldots, U_{m(i)}^i \subset \Sigma^i$ and $V_1^i, \ldots, V_{m(i)}^i \subset \Sigma^{i+1}$ so that:
		\begin{itemize}
			\item There are orientation preserving diffeomorphisms 
			$$\hat{\Phi}^{i}:\Sigma^{i+1}\backslash \cup_{j=1}^{m(i)} V_j^{i}\to \Sigma^{i}\backslash \cup_{j=1}^{m(i)} U_j^i;$$
			\item Each $\bar{U}_j^i$ is diffeomorphic to $\bar{B}_{R_j^i}\cap \Gamma_j^i$ where $\Gamma_j^i\in \mathcal{ACS}_n^*(\Lambda)$ and $\Gamma_j^i\backslash B_{R_j^i}$ is diffeomorphic to $\mathcal{L}(\Gamma_j^i)\times [0,\infty)$.
		\end{itemize}
	\end{enumerate}
\end{thm}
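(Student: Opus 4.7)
The plan is to build the sequence $\Sigma^1, \ldots, \Sigma^N$ by tracking the Brakke flow $\mathcal{K}$ from Theorem \ref{UnitdensityThm} across its successive singular times, performing a topological surgery at each non-compact singular point and absorbing the spherical components that extinguish at compact singularities.

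The first step is to show that $\mathcal{K}$ has only finitely many singular times. Each time-slice $\sing_t(\mathcal{K})$ is finite by Corollary \ref{TimeSingCor}, and every singular point is isolated in space-time by Theorem \ref{IsolatedSingThm}. Because $\Sigma_0$ lies in some ball $B_R$, the avoidance principle keeps $\spt(\mu_t) \subset B_R$ and comparison with a large shrinking sphere gives a finite extinction time $T_{\rm ext}$, confining $\sing(\mathcal{K})$ to a compact space-time region. A discrete closed subset of a compact set is finite, so the singular times form a finite list $0 < T_1 < \cdots < T_K$. Set $\Sigma^1 = \Sigma_0$, identified with $\spt(\mu_{T_1 - \delta})$ via the smooth MCF diffeomorphism on $[0, T_1)$ for any sufficiently small $\delta>0$.

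Inductively, $\Sigma^{i+1}$ is constructed from $\Sigma^i$ by processing the singularities at one time $T_i$. By Lemma \ref{NonCpctOrCpctLem}, each singularity is compact or non-compact; by Proposition \ref{CondTotRegProp}, each compact tangent flow is modeled by a closed shrinker diffeomorphic to $\mathbb{S}^n$, and each non-compact one by some $\Gamma \in \mathcal{ACS}_n^*(\Lambda)$. For each non-compact singular point $x_k^i$, Theorem \ref{IsolatedSingThm} furnishes a radius $\rho_k^i$ and a model $\Gamma_k^i \in \mathcal{ACS}_n^*(\Lambda)$ so that, for sufficiently small $\delta$, $U_k^i := B_{\alpha \rho_k^i}(x_k^i) \cap \spt(\mu_{T_i - \delta})$ is an open connected smooth domain of $\Sigma^i$ whose closure is diffeomorphic to $\bar{B}_\alpha \cap \Gamma_k^i$, with the required asymptotic structure on $\Gamma_k^i \setminus B_\alpha$; shrinking the $\rho_k^i$ makes them pairwise disjoint. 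One then excises the $U_k^i$ together with any connected components of $\spt(\mu_{T_i - \delta})$ about to extinguish at a compact singularity at time $T_i$, caps off each boundary link $\mathcal{L}(\Gamma_k^i)$ with a topological filling $V_k^i$, and defines $\Sigma^{i+1}$ to be a connected component of the resulting closed hypersurface. Each discarded sphere-component contributes a trivial surgery ($U$ equal to the component minus a small ball, $V$ a ball cap) absorbed into the $U/V$ tally. Iterating through $T_K$, the flow started from $\Sigma^N$ has only a compact singularity left, so its model is a closed self-shrinker in $\mathcal{SM}_n(\Lambda)$, and hence $\Sigma^N \cong \mathbb{S}^n$ by Proposition \ref{CondTotRegProp}.

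The main obstacle is the topological bookkeeping needed to cast both neck-like and sphere-extinction singularities uniformly in the $U/V$-surgery framework while preserving connectedness of each $\Sigma^{i+1}$ through component selection and verifying that the diffeomorphism $\hat{\Phi}^i$ can be chosen orientation-preserving. A related subtlety is that $\Sigma^{i+1}$ is not literally a time-slice of $\mathcal{K}$ once a surgery has been performed; to iterate, one either runs a new non-fattening Brakke flow from the surgically modified hypersurface (whose entropy remains at most $\Lambda - \epsilon_0$ because the surgery can be localized arbitrarily finely and entropy is lower semi-continuous under such modifications) and reapplies Theorem \ref{IsolatedSingThm}, or else performs all surgery bookkeeping directly on the fixed flow $\mathcal{K}$ and reads off the sequence $\Sigma^i$ from the iterated decomposition of its regular set.
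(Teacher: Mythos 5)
Your proposal contains a genuine gap in the opening step. You claim that the singular times of $\mathcal{K}$ form a finite list, arguing that ``every singular point is isolated in space-time by Theorem \ref{IsolatedSingThm}'' and that a discrete closed subset of a compact set is finite. But Theorem \ref{IsolatedSingThm} does \emph{not} establish space-time isolation for non-compact singularities: item (2) only gives smoothness of $\mathcal{K}$ restricted to $B_{2\alpha\rho}(\xX_0)\times (t_0-4\alpha^2\rho^2, t_0]\setminus\set{(\xX_0,t_0)}$ and to the annular region $(B_{2\alpha\rho}(\xX_0)\setminus\bar{B}_{\alpha\rho/2}(\xX_0))\times(t_0-\rho^2,t_0+\rho^2)$. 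Nothing is said about $B_{\alpha\rho/2}(\xX_0)\times(t_0,t_0+\rho^2)$, so further singularities (and hence singular times) could accumulate at $t_0$ from above. The paper is careful precisely at this point: it never asserts that $\mathrm{ST}(\mathcal{K})$ is finite. Instead it uses that $\mathrm{ST}(\mathcal{K})$ is a closed (hence compact) subset of $[0,T(\mathcal{K})]$ and extracts a \emph{finite subcover} by intervals $(t_i-\tau(t_i),t_i+\tau(t_i))$, with $\tau(t)$ chosen so the flow is smooth outside the balls $\bar{B}_{\alpha\rho(t)}(\xX)$ for the whole time window; the $s_i^\pm$ are then selected from the complement of $\mathrm{ST}(\mathcal{K})$ by a delicate sup/inf argument. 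Your one-singular-time-at-a-time induction breaks down if singular times accumulate.

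A secondary but important mismatch: the theorem requires $V_1^i,\ldots,V_{m(i)}^i\subset\Sigma^{i+1}$ together with a diffeomorphism $\hat\Phi^i$ between the complements of the $V$'s and $U$'s. In the paper's proof, $\Sigma^{i+1}$ is a connected component of an actual time-slice $\spt(\mu_{s_{i+1}^-})$ of the fixed flow $\mathcal{K}$, and the $V_j^i$ are genuine subregions of that time-slice; the surgery structure is read off from the flow via the diffeomorphisms $\Psi^i$ and $\Phi^i$ coming from Proposition \ref{BoundaryDiffProp} (Appendix A). Your primary exposition instead \emph{constructs} $\Sigma^{i+1}$ by excising the $U$'s and capping with ``topological fillings,'' then restarting a Brakke flow from the result. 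Besides not literally producing the objects in the theorem's statement, this requires a new entropy bound for the surgically modified hypersurface which you justify by appeal to lower semi-continuity of entropy---but lower semi-continuity gives a bound in the wrong direction (it controls entropy from below under convergence, not from above under local modification). You do mention in passing the alternative of ``perform[ing] all surgery bookkeeping directly on the fixed flow,'' which is the paper's route, but you do not develop it; the development is where the real content lies.
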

\begin{proof}
%Without loss of generality, we may assume that $\Sigma^1$ is both non-fattening and satisfies $\lambda[\Sigma^1]<\lambda[\Sigma]$. Indeed, if $\Sigma$ is (after a translation and dilation) a self-shrinker, then, by \cite[Theorem 0.7]{CIMW}, $\Sigma$ is diffeomorphic to $\mathbb{S}^n$ and so the theorem is proved by setting $N=1$ and $\Sigma^1=\Sigma$.  Otherwise, flow $\Sigma$ for a small amount of time by the mean curvature flow (using short time existence of for smooth closed initial hypersurfaces) to obtain a hypersurface, $\Sigma'$, diffeomorphic to $\Sigma$ and, by Huisken's monotonicity formula, with $\lambda[\Sigma']<\lambda[\Sigma]$.  On the one hand, if the level set flow of $\Sigma'$ is non-fattening set $\Sigma^1=\Sigma'$. On the other hand, if the level set flow of $\Sigma'$ is fattening, then we can take $\Sigma^1$ to be a small normal graph over $\Sigma'$ so $\lambda[\Sigma^1]<\lambda[\Sigma]$ and, because the non-fattening condition is generic, the level set flow of $\Sigma^1$ is non-fattening.
	
 Let us denote the set of compact singularities of $\mathcal{K}$ by $\sing^C(\mathcal{K})$ and the set of non-compact singularities by $\sing^{NC}(\mathcal{K})$.  By Lemma \ref{NonCpctOrCpctLem}, $\sing(\mathcal{K})=\sing^{NC}(\mathcal{K})\cup \sing^C(\mathcal{K})$. We note that if $X\in \sing^{NC}(\mathcal{K})$, then, by Proposition \ref{CondTotRegProp}, every element of $\mathrm{Tan}_X \mathcal{K}$ is the flow of an element of $\mathcal{ACS}_n$ and so the tangent flows are non-collapsed at time $0$ in the sense of \cite[Definition 4.9]{BernsteinWang}.  Hence, by \cite[Lemma 5.1]{BernsteinWang}, $\sing^C(\mathcal{K})\neq \emptyset$. In fact, if we define the extinction time of $\mathcal{K}$ to be 
$$
T(\mathcal{K})=\sup\set{t: \spt(\mu_t)\neq \emptyset},
$$
then
$$
\emptyset \neq \set{\xX\in \Real^{n+1}: \Theta_{(\xX_0,T(\mathcal{K}))} (\mathcal{K})\geq 1}=\set{\xX\in \Real^{n+1}: (\xX, T(\mathcal{K}))\in \sing{}^C (\mathcal{K})}.
$$
It follows from Theorem \ref{IsolatedSingThm} that $\sing^C(\mathcal{K})$ consists of at most a finite number of points.  
	
Observe that if $\sing(\mathcal{K})$ consists of exactly one point $X_0$, then we can take $N=1$. Indeed, by the above discussion, this singularity must be compact and hence, by Proposition \ref{CondTotRegProp}, there is a $\Gamma\in \mathcal{S}_n(\Lambda)$ diffeomorphic to $\mathbb{S}^n$ so that one of the tangent flows at $X_0$ is the flow associated to $\mu_{\Gamma}$. In this case we may write $\mathcal{K}=\set{\mu_{\Sigma_t}}_{t\in [0, T(\mathcal{K}))}$ where $\set{\Sigma_t}_{t\in [0, T(\mathcal{K}))}$ is a smooth mean curvature flow. By Brakke's regularity theorem, there is a $t$ near $T(\mathcal{K})$ so that $\Sigma_t$ is a small normal graph over $\Gamma$ and hence $\Sigma^1=\Sigma_0$ is diffeomorphic to $\Gamma$, verifying the claim.
	
Now let $\mathrm{ST}(\mathcal{K})=\set{t\in \Real: (\xX,t)\in \sing(\mathcal{K})}$ be the set of singular times.  Notice that by Corollary \ref{TimeSingCor} there are at most a finite number of singular points associated to each singular time. We observe that as $\Sigma^1=\Sigma_0$ is smooth, there is a $\delta>0$ so that $\mathrm{ST}(\mathcal{K})\subset [\delta, T(\mathcal{K})]$. Furthermore, as $\sing(\mathcal{K})$ is a closed set, so is $\mathrm{ST}(\mathcal{K})$.
	
For each $t\in \mathrm{ST}(\mathcal{K})$, let 
$$
\rho(t)=\min\set{\rho_0(\xX, t, \mathcal{K}): \xX\in \sing{}_t(\mathcal{K})}>0,
$$
where $\rho_0(\xX, t, \mathcal{K})$ is the constant given by Theorem \ref{IsolatedSingThm}. This minimum is positive as $\sing_{t}(\mathcal{K})$ is a finite set. Observe that by Theorem \ref{IsolatedSingThm}, 
\begin{equation} \label{DisjointEqn}
B_{\alpha\rho(t)}(\xX)\cap B_{\alpha\rho(t)}(\xX^\prime)=\emptyset
\end{equation} 
when $\xX,\xX^\prime$ are distinct elements of $\sing_t(\mathcal{K})$ and $\alpha=\alpha(n, \Lambda, \epsilon_0)$ is given by Theorem \ref{IsolatedSingThm}. Next, choose $\tau(t)\in (0,\rho^2(t))$ so that 
$$
\mathcal{K} \lfloor \left(\Real^{n+1}\setminus\bigcup_{\xX\in\sing_t(\mathcal{K})} \bar{B}_{\alpha\rho(t)}(\xX)\right)\times \left(t-\tau(t),t+\tau(t)\right)
$$
is a smooth mean curvature flow.  Such a $\tau$ exists as $\sing(\mathcal{K})$ is a closed set.

As $\mathrm{ST}(\mathcal{K})$ is a closed subset of $[0,T(\mathcal{K})]$, it is a compact set and so the open cover
$$
\set{(t-\tau(t), t+\tau(t)): t\in \mathrm{ST}(\mathcal{K})}
$$
of $\mathrm{ST}(\mathcal{K})$ has a finite subcover. That is, there are a finite number of times $t_1, \ldots, t_{N^\prime}\in \mathrm{ST}(\mathcal{K})$, labeled so that $t_i<t_{i+1}$ and chosen so that 
$$
\mathrm{ST}(\mathcal{K})\subset \bigcup_{i=1}^{N^\prime} (t_i-\tau(t_i),t_i+\tau(t_i)).
$$
Furthermore, we can assume that for each $i$:
\begin{enumerate}
\item For all $j>i$, $t_i-\tau(t_i)< t_j- \tau(t_j)$,
\item For all $j<i$, $t_i+\tau(t_i)>t_j+\tau(t_j)$, and
\item For all $j<i<j'$, $t_j+\tau(t_j)<t_{j'}-\tau(t_{j'})$.
\end{enumerate} 
As otherwise, we could delete $(t_i-\tau(t_i),t_i+\tau(t_i))$ and still have an open cover. 
Note that, by the definition of $\tau(t)$, one must have $t_{N^\prime}=T(\mathcal{K})$.  

By Theorem \ref{IsolatedSingThm} we may choose a sequence of points $s^\pm_1, \ldots, s_{N^\prime}^\pm$ with $t_i\in (s_i^-, s_i^+)$, $|s_i^\pm-t_i|<\tau(t_i)$,  $s_i^+\leq s_{i+1}^-$ and so that 
$$
\left([0, s_1^-]\cup \bigcup_{i=1}^{N^\prime-1}[s_i^+, s_{i+1}^-]\right)\cap \mathrm{ST}(\mathcal{K})=\emptyset.
$$
More concretely, first take $s_1^-\in (t_1-\tau(t_1), t_1)$ with $s_1^->0$ and $s_{N^\prime}^+=t_{N^\prime}+\frac{1}{2}\tau(t_{N^\prime})$. 
 For $1\leq i\leq N^\prime-1$, let $$\tilde{s}_i^+=\sup \left( \mathrm{ST}(\mathcal{K})\cap (t_i-\tau(t_i), t_i+\tau(t_i))\right)$$
 and for $2\leq i \leq N^\prime$, let 
 $$\tilde{s}_i^-=\inf \left( \mathrm{ST}(\mathcal{K})\cap (t_i-\tau(t_i), t_i+\tau(t_i))\right).$$  The definition of $\tau(t_i)$ and Theorem \ref{IsolatedSingThm} imply that $\tilde{s}_i^-=t_i$. 
 As the set of singular times is closed and $t_i\in \mathrm{ST}(\mathcal{K})$,  $\tilde{s}_i^+\in \mathrm{ST}(\mathcal{K})$ and $t_i \leq \tilde{s}^+_i$.  
 We treat two cases.  In the first case we suppose that $t_{i+1}-\tau(t_{i+1})<t_i+\tau(t_i) $.  As $\tilde{s}_{i+1}^-=t_{i+1}$, there are then no singular times in the interval $(t_{i+1}-\tau(t_{i+1}),t_i+\tau(t_i))$ and so we may take $s_i^+=s_{i+1}^-$ to be the same point in this interval.  In the second case, we suppose that $t_i+\tau(t_i)\leq t_{i+1}-\tau(t_{i+1})$ and observe that $\tilde{s}_i^+\leq t_i+\tau(t_i)\leq t_{i+1}-\tau(t_{i+1})$.  In fact, $\tilde{s}_i^+<t_i+\tau(t_i)$ as otherwise in order to cover $\mathrm{ST}(\mathcal{K})$ assumption (3) from above would not hold.  
   Pick $s_i^+$ as some point in $(\tilde{s}_i^+, t_i+\tau(t_i))$ and $s_{i+1}^-$ as some point in $( t_{i+1}-\tau(t_{i+1}), t_i)$.  The lack of singular times in $[0,s_0^-]$ and in each $[s_i^+, s_{i+1}^-]$ follows by our choices and assumptions (1) and (3) above. 
   
For $1\leq i \leq N^\prime$ set $\Sigma^i_\pm = \spt(\mu_{s_i^\pm})$. By the choice of $s_i^\pm$, each $\Sigma^i_\pm$ is a closed hypersurface and, as there are no singular times between $s_i^+$ and $s_{i+1}^-$, we have for $1\leq i \leq N^\prime-1$ diffeomorphisms $\Phi^i: \Sigma_+^i \to \Sigma_-^{i+1}$ coming from the flow and, for the same reason, a diffeomorphism $\Phi^0: \Sigma^1\to \Sigma^1_-$.  Observe that, \emph{a priori}, the $\Sigma^i_\pm$ need not consist of one component (indeed, $\Sigma^{N^\prime}_+$ is empty).  By Corollary \ref{TimeSingCor}, $\sing_{t_i}(\mathcal{K})$ is finite for each $1\leq i \leq N^\prime$ and we write
$$
\set{\xX_i^1, \ldots, \xX_i^{M(i)}}=\sing{}_{t_i}(\mathcal{K})
$$
i.e., the $(\xX_i^j,t_i)$ are the singular points of the flow at time $t_i$.  Up to relabeling, there is an $0\leq m(i)\leq M(i)$ so that for $1\leq j \leq m(i)$, $(\xX_i^j, t_i)\in \sing^{NC}(\mathcal{K})$ while for $m(i)<j\leq M(i)$, $(\xX_i^j,t_i)\in \sing^C(\mathcal{K})$.  Set $R^i=\alpha\rho(t_i)$ and, for each $\xX_i^j$, let $U^{i}_{j,\pm}\subset \Sigma^i_\pm$ be the sets $ B_{R^i}(\xX_i^j)\cap \Sigma^i_\pm$. By \eqref{DisjointEqn} for fixed $j$, these are pairwise disjoint sets and, by Theorem \ref{IsolatedSingThm}, these intersections are transverse and so the $\sigma^{i}_{j,\pm}=\partial U^i_{j,\pm}$ are submanifolds of $\Sigma^i_\pm$.  Hence,  the $U^i_{j,\pm}$ are smooth pairwise disjoint domains.

 Furthermore, by Theorem \ref{IsolatedSingThm} and fact that $\tau(t)<\rho(t)$, each $\bar{U}^i_{j,-}$ is diffeomorphic to $\bar{B}_{\alpha}\cap \Gamma_{j}^i$ for some $\Gamma_{j}^i \in \mathcal{S}_n$.  In particular, for $j>m(i)$ we have that $\bar{U}^i_{j,-}$ is a closed connected hypersurface, while for $1\leq j\leq m(i)$, $\partial \bar{U}^i_{j,-}$ is non-empty and connected. Hence, for $j>m(i)$, $\bar{U}^i_{j,+}=\emptyset$, while for $1\leq j\leq m(i)$, $\partial \bar{U}^i_{j,+}$ is non-empty and connected.
Furthermore, Theorem \ref{IsolatedSingThm} implies that there are diffeomorphisms (see Appendix A)
$$\Psi^i: \Sigma_-^i\backslash \bigcup_{j=1}^{M(i)} U_{j,-}^i \to \Sigma_+^i \backslash \bigcup_{j=1}^{M(i)} U_{j,+}^i.$$

As $\Sigma^1$ is connected and $\Phi^0(\Sigma^1)=\Sigma^1_-$, $\Sigma^1_-$ is also connected. As each $\sigma^1_{j,-}$ is connected, we obtain that $\hat{\Sigma}^1_-=\Sigma^1_-\backslash \bigcup_{j=1}^{M(1)} U_{j,-}^1$ is connected. Let $\tilde{\Sigma}^1_+$ be the connected component of $\Sigma^1_+$ that contains $\Psi^1(\hat{\Sigma}^1_{-})$.  Inductively, let $\tilde{\Sigma}^{i+1}_-=\Phi^i(\tilde{\Sigma}^i_+)$ and $\hat{\Sigma}^{i+1}_-=\tilde{\Sigma}^{i+1}_-\backslash \bigcup_{j=1}^{M(i+1)} U_{j,-}^{i+1}$ and define $\tilde{\Sigma}^{i+1}_+$ to be the connected component of $\Sigma^{i+1}_+$ that contains $\Psi^{i+1}(\hat{\Sigma}^{i+1}_-)$. Here we adopt the convention that if $\hat{\Sigma}^{i+1}_-=\emptyset$, then $\tilde{\Sigma}^{i+1}_+=\emptyset$. It follows inductively  that each $\tilde{\Sigma}^i_{\pm}$ is connected. 
Let $\tilde{\Phi}^i: \tilde{\Sigma}^i_+\to \tilde{\Sigma}^{i+1}_-$ be the diffeomorphisms given by restricting the $\Phi^i$.  To be consistent we also set $\tilde{\Sigma}^1_-=\Sigma^1_-$ and $\tilde{\Phi}^0=\Phi^0$. 

Finally let
$$
N=\max\set{1\leq i\leq N^\prime: \tilde{\Sigma}^k_-\neq\emptyset\mbox{ for all $1\leq k\leq i$}}.
$$
If $N<N^\prime$, then, by constructions, $\hat{\Sigma}^N_-=\emptyset$ and $\tilde{\Sigma}^N_- = U^N_{j,-}$ for some $j>m(N)$. If $N=N^\prime$, then $t_N=T(\mathcal{K})$ at which all singularities are compact. Thus it follows from \cite[Theorem 0.7]{CIMW} that $\tilde{\Sigma}^N_-$ is diffeomorphic to $\mathbb{S}^n$. The theorem now follows by taking $\Sigma^i=\tilde{\Sigma}^i_-$ for $2\leq i \leq N$ and $\hat{\Phi}^i$ are the diffeomorphisms given by $(\tilde{\Phi}^i\circ\Psi^i)^{-1}$.
\end{proof}
%\begin{rem}
%	One should be able to show that $\tilde{\Sigma}^\pm_i=\Sigma^\pm_i$.  However, such an argument appears to be somewhat technical and is not needed in the proof and so we do not attempt to verify this.
%\end{rem}
%

\section{A Sharpening of \cite{BernsteinWang2}}\label{SharpeningSec}
In order to prove Theorem \ref{MainACSThm}, we begin with an elementary lemma.
\begin{lem}\label{StupidLem}
	If $\xX_1, \ldots, \xX_{m+1}\in \Real^{n+1}$ is a sequence of points so that
	\begin{equation} \label{StupidHyp}
	|\xX_i-\xX_{i+1}|\leq \hat{K} (1+|\xX_i|)^{-1}
	\end{equation}
	for  $1\leq i \leq m$ and some $\hat{K}\geq 0$, then
	\begin{equation} \label{StupidClaim}
	|\xX_1-\xX_{m+1}|\leq K(m)  (1+|\xX_1|)^{-1}
	\end{equation}
	where $K(m)=(\hat{K}+1)^m-1$.
\end{lem}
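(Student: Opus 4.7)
The plan is to proceed by induction on $m$. The base case $m=1$ is immediate, as $K(1)=\hat{K}$, so \eqref{StupidHyp} for $i=1$ is exactly \eqref{StupidClaim}.

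For the inductive step, suppose the result holds for $m$ and consider a sequence $\xX_1,\ldots,\xX_{m+2}$ satisfying \eqref{StupidHyp}. Applying the induction hypothesis to the initial subsequence of length $m+1$ gives $|\xX_1-\xX_{m+1}|\leq K(m)(1+|\xX_1|)^{-1}$. Combining the triangle inequality with \eqref{StupidHyp} for $i=m+1$ yields
\begin{equation*}
|\xX_1-\xX_{m+2}|\leq \frac{K(m)}{1+|\xX_1|}+\frac{\hat{K}}{1+|\xX_{m+1}|}.
\end{equation*}
Since $K(m+1)=(\hat{K}+1)K(m)+\hat{K}=K(m)+\hat{K}(\hat{K}+1)^m$, closing the induction reduces to proving the auxiliary estimate
\begin{equation*}
1+|\xX_{m+1}|\geq \frac{1+|\xX_1|}{(\hat{K}+1)^m}.
\end{equation*}

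To establish this auxiliary bound I would split into two cases. If $(1+|\xX_1|)^2\geq (\hat{K}+1)^m$, the reverse triangle inequality and the induction hypothesis give
\begin{equation*}
1+|\xX_{m+1}|\geq (1+|\xX_1|)-\frac{K(m)}{1+|\xX_1|}=\frac{(1+|\xX_1|)^2-K(m)}{1+|\xX_1|},
\end{equation*}
and a short algebraic manipulation using $K(m)=(\hat{K}+1)^m-1$ shows that this quantity is bounded below by $(1+|\xX_1|)(\hat{K}+1)^{-m}$ exactly when $(1+|\xX_1|)^2\geq (\hat{K}+1)^m$. If instead $(1+|\xX_1|)^2<(\hat{K}+1)^m$, then $(1+|\xX_1|)(\hat{K}+1)^{-m}<(1+|\xX_1|)^{-1}\leq 1$, while the trivial estimate $1+|\xX_{m+1}|\geq 1$ gives the bound for free.

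This is really just bookkeeping; there is no serious obstacle. The only subtle point is the case split, which is needed because the reverse triangle inequality bound $(1+|\xX_1|)-K(m)(1+|\xX_1|)^{-1}$ may fail to be positive when $|\xX_1|$ is small relative to $\hat{K}$.
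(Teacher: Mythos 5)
Your proof is correct, and the overall skeleton (induction on $m$, triangle inequality, then a bound on $(1+|\xX_{m+1}|)^{-1}$ in terms of $(1+|\xX_1|)^{-1}$) matches the paper's. The difference lies in how you establish the auxiliary estimate $1+|\xX_{m+1}|\geq (1+|\xX_1|)/(\hat{K}+1)^m$, which is exactly the paper's inequality $(1+|\xX_{m+1}|)^{-1}\leq (1+K(m))(1+|\xX_1|)^{-1}$ since $1+K(m)=(\hat K+1)^m$. You derive it from the \emph{reverse} triangle inequality, which costs you the sign-sensitivity you correctly flag and forces the two-case split depending on whether $(1+|\xX_1|)^2\geq(\hat K+1)^m$. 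The paper instead applies the ordinary triangle inequality in the form $|\xX_1|\leq K(m)(1+|\xX_1|)^{-1}+|\xX_{m+1}|$, then uses the crude bound $(1+|\xX_1|)^{-1}\leq 1$ to get $1+|\xX_1|\leq 1+K(m)+|\xX_{m+1}|\leq(1+K(m))(1+|\xX_{m+1}|)$ in one line with no case analysis. Both routes work; the paper's is a bit slicker because it never needs the displaced point $\xX_{m+1}$ to be far from the origin, so positivity is never an issue.
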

\begin{proof}
	We proceed by induction on $m$. The lemma is obviously true when $m=1$. Suppose \eqref{StupidClaim} holds for $m=m'$.  Using this induction hypothesis with \eqref{StupidHyp} implies that
	$$
	|\xX_1-\xX_{m'+2}|\leq |\xX_1-\xX_{m'+1}|+|\xX_{m'+1}-\xX_{m'+2}|\leq K(m') (1+|\xX_1|)^{-1}+\hat{K} (1+|\xX_{m'+1}|)^{-1}.
	$$
	Furthermore, by the induction hypothesis and triangle inequality
	$$ |\xX_{1}|\leq K(m') (1+|\xX_{1}|)^{-1}+ |\xX_{m'+1}|.$$
	As $K(m')\geq 0$ and $(1+|\xX_{1}|)^{-1}\leq 1$, this implies that
	$$
	1+ |\xX_{1}| \leq 1+K(m')+|\xX_{m'+1}|\leq (1+K(m'))  (1+|\xX_{m'+1}|).
	$$
	That is,
	$$ (1+|\xX_{m'+1}|)^{-1}\leq (1+K(m')) (1+|\xX_{1}|)^{-1}.
	$$
	Hence, 
	$$
	|\xX_1-\xX_{m'+2}|\leq (K(m')+\hat{K}(1+K(m'))) (1+|\xX_1|)^{-1}
	$$
	and, by the induction hypothesis, $K(m')=(\hat{K}+1)^{m'}-1$ and so setting
	$$
	K(m'+1)=K(m')+\hat{K}(1+K(m'))=(\hat{K}+1)^{m'+1}-1
	$$
    verifies that \eqref{StupidClaim} holds for $m=m'+1$ and finishes the proof.
\end{proof}

We next observe that the proof of the main result of \cite[Theorem 0.1]{BernsteinWang2} actually allows us to make the following more refined conclusion.
\begin{prop} \label{MainACSProp}
Fix $n\geq 2$, if $\Sigma\in \mathcal{ACS}_n[\lambda_{n-1}]$, then there is a homeomorphic involution $\phi:\mathbb{S}^{n}\to \mathbb{S}^{n}$  which fixes $\mathcal{L} (\Sigma)$, the link of the asymptotic cone, $\mathcal{C} (\Sigma)$, of $\Sigma$, and swaps the two components of $\mathbb{S}^n\backslash \mathcal{L}(\Sigma)$.  
\end{prop}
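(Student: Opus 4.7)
The plan is to retrace the proof of \cite[Theorem 0.1]{BernsteinWang2} and observe that its geometric construction, when bookkept carefully, produces the desired involution rather than merely the weaker topological conclusion of that earlier theorem.

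First I would set up the asymptotic picture. By Corollary \ref{GraphCor}, outside a compact set $\Sigma$ is the normal graph of a function over its asymptotic cone $\mathcal{C}(\Sigma)$ with cubic decay in $|\xX|$ on the function and its first two derivatives. In particular $\mathcal{L}(\Sigma) = \mathcal{C}(\Sigma) \cap \mathbb{S}^n$ is a smoothly embedded closed hypersurface of $\mathbb{S}^n$, which separates $\mathbb{S}^n$ into two components $U_\pm$, corresponding to the two components $\Omega_\pm$ of $\R^{n+1}\setminus \Sigma$. The graph estimate implies that, far from the origin, $\Omega_\pm$ are each diffeomorphic to $U_\pm \times (0,\infty)$ via the cone parametrization, and that reflecting across $\mathcal{C}(\Sigma)$ via the unit normal approaches the identity on $\mathcal{L}(\Sigma)$ at infinity.

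Next, I would revisit the identification of $\Omega_\pm$ constructed in \cite{BernsteinWang2}, which proceeds by a normal pushoff from one side of $\Sigma$ to the other, followed by a finite iteration of local corrections along the ends. The key refinement is that each step of the construction can be implemented in a manifestly symmetric fashion---exchanging the roles of $\Omega_+$ and $\Omega_-$---so that the resulting identification is involutive by construction. The error introduced at each step decays at the rate $\hat K(1+|\xX|)^{-1}$ thanks to the graph estimates, and Lemma \ref{StupidLem} bounds the total displacement over any finite iteration of $m$ steps by $K(m)(1+|\xX|)^{-1}$. Rescaling along radial rays and passing to the limit at infinity, the construction then descends to a homeomorphism $\phi\colon \mathbb{S}^n \to \mathbb{S}^n$ that is the identity on $\mathcal{L}(\Sigma)$, swaps $U_\pm$, and satisfies $\phi^2=\mathrm{id}$.

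The main obstacle is verifying that the patching of the local symmetric corrections along a finite cover of $\mathcal{L}(\Sigma)$ by tubular charts preserves the involutive property on overlaps, so that the global map remains order two (and not merely a homeomorphism). This requires a coherent choice of tubular data adapted to both sides of $\Sigma$; the uniform decay from Corollary \ref{GraphCor} together with the bound from Lemma \ref{StupidLem} provides exactly the estimates needed to carry out the global patching while preserving symmetry, and hence to conclude that $\phi$ is the required homeomorphic involution.
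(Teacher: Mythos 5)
Your proposal retraces the right ingredients (the graph structure at infinity, the radial projection identifying the ends of $\Sigma$ with $\Omega_\pm$, the iterated nearest-point pushoffs controlled by Lemma \ref{StupidLem}), but it misses the single structural observation that makes the paper's argument go through cleanly, and it substitutes for it a harder and unresolved task.

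The paper does \emph{not} attempt to make the pushoff construction "manifestly symmetric" at every step. It instead notes that to produce an involution of $\mathbb{S}^n$ fixing $\mathcal{L}(\Sigma)$ and swapping $\Omega_\pm$, it suffices to produce \emph{any} homeomorphism $\psi\colon\bar\Omega_+\to\bar\Omega_-$ that restricts to the identity on $\mathcal{L}(\Sigma)$; one then defines $\phi$ by $\psi$ on $\bar\Omega_+$ and $\psi^{-1}$ on $\Omega_-$, and $\phi^2=\mathrm{id}$ holds automatically. This gluing trick eliminates entirely the problem you flag as "the main obstacle"---coherence of a symmetric patching across overlaps. Your proposal never carries out that patching; you assert that the decay estimates "provide exactly the estimates needed," but the issue there is not quantitative decay, it is the qualitative question of whether a symmetric local correction scheme can be made globally consistent and order two, which you leave unaddressed. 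So as written this is a genuine gap: the involution property is claimed but not established.

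Two smaller points. First, you invoke Corollary \ref{GraphCor} for the graph estimates, but that corollary is proved under the additional hypotheses \eqref{Assump1} and \eqref{Assump2} (and for $n\geq 3$), which are \emph{not} assumed in Proposition \ref{MainACSProp} (stated for $n\geq 2$ and all $\Sigma\in\mathcal{ACS}_n[\lambda_{n-1}]$); the paper instead uses the unconditional asymptotics for asymptotically conical self-shrinkers from \cite{BernsteinWang2} (Propositions 4.2, 4.4, Lemma 4.3). Second, the description "rescaling along radial rays and passing to the limit at infinity" is vague about the actual mechanism: the paper composes a finite chain of nearest-point projections along the two mean curvature flows $\Gamma^\pm_t$ to build $\Psi\colon\Gamma^+\to\Gamma^-$ with displacement $O\bigl((1+|\xX|)^{-1}\bigr)$, then conjugates by the radial projections $\Pi^\pm$ and extends by the identity on $\mathcal{L}(\Sigma)$, proving continuity at the boundary via the two-sided distance estimate and then using compactness of $\bar\Omega_+$ to upgrade the continuous bijection to a homeomorphism. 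If you adopt the $\psi/\psi^{-1}$ gluing and the correct references for the asymptotics, your argument lines up with the paper's.
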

\begin{proof}
By \cite[Theorem 0.1]{BernsteinWang2}, the link $\mathcal{L}(\Sigma)$ is connected and separates $\mathbb{S}^n$ into two components $\Omega_+$ and $\Omega_-$.  In particular,  $\mathcal{L}(\Sigma)=\partial \bar{\Omega}_+=\partial \bar{\Omega}_-$.  In order to construct $\phi$, it is enough to show the existence of a homeomorphism ${\psi}: \bar{\Omega}_+\to \bar{\Omega}_-$ so that ${\psi}|_{\mathcal{L}(\Sigma)} : \mathcal{L}(\Sigma)\to \mathcal{L}(\Sigma)$ is the identity map.
Indeed, if such a ${\psi}$ exists, one defines $\phi$ by
$$
\phi(p)=\left\{ \begin{array}{ll} {\psi}(p) & p\in \bar{\Omega}_+ \\ {\psi}^{-1}(p) & p \in \Omega_- \end{array}\right.
$$

To explain the construction of ${\psi}$ let us first summarize the main objects used in the proof of \cite[Theorem 0.1]{BernsteinWang2}.   First, recall that it is shown there that associated to $\Sigma$ are two smooth mean curvature flows $\set{\Gamma_t^{\pm }}_{t\in[-1,0]}$ with $\Gamma_{-1}^{+}$ the normal exponential graph over $\Sigma$ of a small positive multiple of the lowest eigenfunction of the self-shrinker stability operator of $\Sigma$ (normalized to be positive) and $\Gamma^-_{-1}$ to be a small negative multiple of this function.  In particular,  by choosing the multiple small enough, one can ensure both that
$\Gamma^+_{-1}$ is the exponential normal graph of some function on $\Gamma^-_{-1}$ and that $\Gamma^-_{-1}$ is the exponential normal graph of some function on $\Gamma^+_{-1}$.
 Furthermore, up to relabeling, each $\Gamma^{\pm}=\Gamma^{\pm}_0$ is diffeomorphic to $\Omega^\pm$ the components of $\mathbb{S}^n\backslash \mathcal{L}(\Sigma)$.  Moreover, these diffeomorphisms, which we denote by $\Pi^\pm$, are given by restricting the map 
\begin{equation*}
\Pi(p)=\frac{\xX(p)}{|\xX(p)|}
\end{equation*}
to $\Gamma^\pm$.

We next use the flow $\set{\Gamma^\pm_{t}}_{t\in[-1,0]}$ to construct a natural diffeomorphism ${\Psi}: \Gamma^+\to \Gamma^-$ which has the property that there is a constant $K>0$ so that
\begin{equation}
\label{DistortionEst}\left|\xX(p) -\xX({\Psi}(p))\right|\leq \frac{K}{1+|\xX(p)|}.
\end{equation}
We do so iteratively.  Specifically, by \cite[Items (1) and (2) of Proposition 4.4 and Proposition 4.5]{BernsteinWang2} there is a constant $\tilde{C}_0>0$ so that
\begin{equation}
\label{GammaCurvEst}
\sup_{t\in [-1,0]} \sup_{\Gamma_t^\pm} \left( |A_{\Gamma^\pm_t}|+|\nabla_{\Gamma^\pm_t} A_{\Gamma^\pm_t}|\right)<\tilde{C}_0.
\end{equation}
This, together with \cite[Item (3) of Proposition 4.4]{BernsteinWang2}, implies that there is a $\rho>0$ so that for each $t\in [-1,0]$, 
$\mathcal{T}_{\rho}(\Gamma_t^\pm)$ is a regular tubular neighborhood of $\Gamma_t^\pm$.  It follows from this and \eqref{GammaCurvEst} that there is a $\delta>0$ so that if $t_1, t_2\in [-1,0]$ and $|t_1-t_2|<\delta$, then $\Gamma^\pm_{t_1}$ is a normal exponential graph over $\Gamma^\pm_{t_2}$ and vice versa.  As such, for all $t_1, t_2\in [-1,0]$ with $|t_1-t_2|<\delta$, there is a diffeomorphism  
$${\Psi}^\pm_{t_2,t_1}: \Gamma_{t_1}^\pm \to \Gamma_{t_2}^\pm$$
defined by nearest point projection from $\Gamma_{t_1}^\pm$ to $\Gamma^{\pm}_{t_2}$.
Pick $M\in \mathbb{N}$ so $M\delta>1$ and choose $0=s_0>s_1>\ldots >s_M=-1$ so that $|s_i-s_{i+1}|<\delta$ and define a diffeomorphism $\Psi^-: \Gamma^-_{-1}\to \Gamma^-$ by 
$$
\Psi^-= \Psi^-_{s_0, s_1}\circ \Psi^-_{s_1,s_2} \circ \cdots \circ \Psi^-_{s_{M-1}, s_{M}}.
$$
Likewise, define a diffeomorphism $\Psi^+:\Gamma^+ \to \Gamma^+_{-1}$ by
$$
\Psi^+=\Psi^+_{s_{M}, s_{M-1}}\circ \Psi^+_{s_{M-1}, s_{M-2}}\circ \cdots \circ \Psi^+_{s_1,s_0}
$$
 and let $\Psi^{+,-}:\Gamma^+_{-1}\to \Gamma^-_{-1}$ be given by nearest point projection.  By construction, this is also a diffeomorphism and so the map
$$
\Psi=\Psi^-\circ \Psi^{+,-}\circ \Psi^+
$$
is a diffeomorphism $\Psi:\Gamma^+\to \Gamma^-$.  

By construction, if $t_1,t_2\in [-1,0]$ and $|t_1-t_2|<\delta$, then for all $p\in \Gamma_{t_1}^\pm$,
\begin{equation}
\label{SillyEst}
|\xX(p)-\xX(\Psi_{t_2, t_1}^\pm(p) )|<\rho.
\end{equation}
Furthermore, \cite[Item (1) of Proposition 4.4]{BernsteinWang2} implies that for $t\in [-1,0]$ each $\Gamma_t^\pm $ is smoothly asymptotic to $\mathcal{C}(\Sigma)$. In particular, there is a $R>0$ and functions $u_t^\pm$ on $\mathcal{C}(\Sigma)\backslash B_R$ whose normal exponential graph over $\mathcal{C}(\Sigma)$ sits inside of $\Gamma^\pm_t$ and contains $\Gamma^\pm_t\backslash B_{2R}.$
Moreover,  by \cite[Item (2) of Proposition 4.2]{BernsteinWang2} and \cite[Lemma 4.3]{BernsteinWang2} 
there is a constant $K^\prime>0$ so that for $p\in \mathcal{C}(\Sigma)\backslash B_R$,
$$
|u_t^\pm(p)|\leq K^\prime (1+|\xX(p)|)^{-1}.
$$

Hence, for any $t_1,t_2\in [-1,0]$, if $p \in \Gamma^\pm_{t_1}\backslash B_{2R}$, then there is a point $p'\in \mathcal{C}(\Sigma)\backslash B_R$ so that
\begin{equation}
\label{DecayEst}
|\xX(p)-\xX(p')|\leq K^\prime (1+ |\xX(p')|)^{-1}
\end{equation}
and also a point $p''\in \Gamma^\pm_{t_2}$ so that
\begin{equation}
|\xX(p')-\xX(p'')|\leq K^\prime (1+ |\xX(p')|)^{-1}.
\end{equation}
Hence, if $|t_1-t_2|<\delta$, then as $\Psi^\pm_{t_2,t_1}$ is given by nearest point projection,
\begin{align*}
|\xX(p)-\xX(\Psi^\pm_{t_2,t_1}(p))| &\leq |\xX(p)-\xX(p'')|\\
                                   &\leq |\xX(p)-\xX(p')|+|\xX(p')-\xX(p'')|\\
                                   &\leq 2 K^\prime (1+|\xX(p')|)^{-1}.
\end{align*}
As $K^\prime>0$ and $1+|\xX(p')|\geq 1$, \eqref{DecayEst} implies that
$$
(1+|\xX(p')|)^{-1}\leq (1+K^\prime)(1+|\xX(p)|)^{-1},
$$
and so
$$
|\xX(p)-\xX(\Psi^\pm_{t_2,t_1}(p))|\leq 2 K^\prime (1+K^\prime)(1+|\xX(p)|)^{-1}.
$$
Combining this with \eqref{SillyEst}
one obtains that for all $p \in \Gamma^\pm_{t_1}$,
$$
|\xX(p)-\xX(\Psi^\pm_{t_2,t_1}(p))|\leq \hat{K}(1+|\xX(p)|)^{-1}
$$
where $\hat{K}=2K^\prime (1+K^\prime)+\rho(1+2R)$.
By the same arguments, for all $p\in \Gamma^{+}_{-1}$,
$$
|\xX(p)-\xX(\Psi^{+,-}(p))|\leq \hat{K}(1+|\xX(p)|)^{-1}.
$$
Hence, it follows from Lemma \ref{StupidLem}, that
$$
|\xX(p)-\xX(\Psi(p))|\leq K(1+|\xX(p)|)^{-1}
$$
where $K=(1+\hat{K})^{2M+2}-1$.

To complete the proof set
$$
\psi(p)=\left\{\begin{array}{cc} \Pi^-(\Psi((\Pi^+)^{-1}(p))) & p \in \Omega_+ \\ p & p\in \partial \Omega_+. \end{array} \right. 
$$
We claim that $\psi$ is a homeomorphism. 
First, note that, by \cite[Item (3) of Proposition 4.4]{BernsteinWang2}, there is an $R>1$ and $\tilde{C}_1>1$ so that if $p\in \Gamma^\pm \backslash B_{R}$, then 
$$
\tilde{C}_1^{-1} |\xX(p)|^{2\mu} < \dist_{\Real^{n+1}}(p, \mathcal{C}(\Sigma)) < \tilde{C}_1 |\xX(p)|^{-1}
$$
where $\mu<-1$.  Hence, 
\begin{equation}\label{TwoSidedEst}
C^{-1} |\xX(p)|^{2\mu-1}<\dist_{\mathbb{S}^n}(\Pi^\pm (p), \mathcal{L}(\Sigma)) < C |\xX(p)|^{-2}
\end{equation}
where  $C\geq\tilde{C}_1$.
Hence, for $q\in \Omega^+$, with $\dist_{\mathbb{S}^n}(q, \mathcal{L}(\Sigma))$ sufficiently small, if we set $q'=(\Pi^+)^{-1}(q)\in \Gamma^+$, then
$$
|\xX(q')|\geq C^{\frac{1}{2\mu-1}} \dist_{\mathbb{S}^n}(q, \mathcal{L}(\Sigma))^{\frac{1}{2\mu-1}}.
$$
By \eqref{DistortionEst},
\begin{align*}
| |\xX(\Psi(q'))|-|\xX(q')|| &\leq 
|\xX(\Psi(q'))-\xX(q')|\\
 &\leq K C^{-\frac{1}{2\mu-1}} \dist_{\mathbb{S}^n}(q, \mathcal{L}(\Sigma))^{-\frac{1}{2\mu-1}}.
\end{align*}
Hence,  for   $\dist_{\mathbb{S}^n}(q, \mathcal{L}(\Sigma))$ sufficiently small,
$$
\dist_{\mathbb{S}^n}(q, \psi(q))\leq  4K C^{-\frac{1}{2\mu-1}} \dist_{\mathbb{S}^n}(q, \mathcal{L}(\Sigma))^{-\frac{1}{2\mu-1}} |\xX(q')|^{-1} .
$$
Using \eqref{TwoSidedEst}, again gives
$$
\dist_{\mathbb{S}^n}(q, \psi(q))\leq 4K C^{-\frac{2}{2\mu-1}} \dist_{\mathbb{S}^n}(q, \mathcal{L}(\Sigma))^{-\frac{2}{2\mu-1}}.
$$
As $\mu<-1$, for any $q_0\in \mathcal{L}(\Sigma)$, the right hand side goes to $0$ as $q\to q_0$. By the triangle inequality
$$
\dist_{\mathbb{S}^n}(q_0, \psi(q))\leq \dist_{\mathbb{S}^n}(q, \psi(q))+\dist_{\mathbb{S}^n}(q, q_0)
$$
and so the right hand side goes to $0$ as $q\to q_0$. Hence,  $\psi$ is continuous. Finally, as $\bar{\Omega}_+$ is compact and $\bar{\Omega}_-$ is Hausdorff, $\psi$ is a closed map and hence, as $\psi$ is a bijection, it is a homeomorphism.
\end{proof}

Theorem \ref{MainACSThm} is a standard topological consequence of Proposition \ref{MainACSProp}.  
\begin{proof} (of Theorem \ref{MainACSThm})

	  Observe that as $\mathcal{L}(\Sigma)$ is connected, by \cite[Theorem 0.1]{BernsteinWang2}, there are exactly two components of $\mathbb{S}^n\backslash \mathcal{L}(\Sigma)$, which we denote by $U^\pm$. Let $\phi:\mathbb{S}^n\to \mathbb{S}^n$ be the homeomorphism given by Proposition \ref{MainACSProp} so $\phi(U^-)=U^+$.  Pick a regular tubular neighborhood $T\subset \mathbb{S}^n$ of $\mathcal{L}(\Sigma)$.  We let $V^\pm =U^\pm \cup T$  and observe that $\bar{U}^\pm$, the closure of $U^\pm$, is a retract of $V^\pm$ and that $\mathcal{L}(\Sigma)$ is a retraction of $T=V^-\cap V^+$.  
	
	  As $\bar{U}^\pm$ is a retraction of $V^\pm$ and $\mathcal{L}(\Sigma)$ is a retraction of $T$, the natural inclusion maps induce isomorphisms between the reduced homology groups $\tilde{H}_k(\bar{U}^\pm)$ and $\tilde{H}_k(V^\pm)$ and between $\tilde{H}_k(\mathcal{L}(\Sigma))$ and $\tilde{H}_k(T)$. As such, there is a natural map $\Phi: \tilde{H}_k(V^-)\to \tilde{H}_k(V^+)$ defined by the following diagram,
	 	 \begin{equation*}
	 	 \begin{tikzcd}
	 	  {} & \tilde{H}_k(T) \arrow{r}{j^-_*} \arrow[pos=0.3]{dr}{j^+_*} & \tilde{H}_k(V^-) \arrow{d}{\Phi}\\
	 	 \tilde{H}_k(\mathcal{L}(\Sigma))  \arrow[leftrightarrow]{ur}{\simeq}\arrow{r}{i^-_*} \arrow{dr}{i^+_*} & \tilde{H}_k(\bar{U}^-)   \arrow[leftrightarrow, crossing over, pos=0.3]{ur}{\simeq} \arrow{d}{\phi_*} 	& \tilde{H}_k(V^+) \\
	 	 {} & \tilde{H}_k(\bar{U}^+)   \arrow[leftrightarrow]{ur}{\simeq}&
	 	 \end{tikzcd}
	 	 \end{equation*}
	  where  $i^\pm: \mathcal{L}(\Sigma)\to \bar{U}^\pm$ and $j^\pm:T\to V^\pm$ denote the natural inclusion maps and we used that $\phi\circ i^-=i^+$. As $\phi$ is a homeomorphism, both $\phi_*$ and $\Phi$ are  isomorphisms.  This implies that the map
  \begin{equation*}
   J=(j^-_*, -j^+_*):\tilde{H}_k(T) \to \tilde{H}_k(V^-)\oplus \tilde{H}_k(V^+)
  \end{equation*}
  is surjective if and only if $\tilde{H}_k(V^-)=\tilde{H}_k(V^+)=\set{0}$.  Indeed, if the map is surjective, then for any element $\alpha\in \tilde{H}_k(V^-)$ there is an element $\beta \in \tilde{H}_k(T)$ so that $J(\beta)=(\alpha,0)$.  That is,  $j_*^-(\beta)=\alpha$ and $j_*^+(\beta)=0$. Hence, $0=j_*^+(\beta)=\Phi(j_*^-(\beta))=\Phi(\alpha)$. In other words, as $\Phi$ is an isomorphism, $\alpha\in \ker(\Phi)=\set{0}$ and so $\tilde{H}_k(V^-)=\set{0}$. The proof that $\tilde{H}_k(V^+)=\set{0}$ is the same.  The converse is immediate.
  
  We next recall several standard facts about the reduced homology of manifolds and of manifolds with boundary.  First of all, as $\mathcal{L}(\Sigma)$ is a connected, oriented $(n-1)$-dimensional manifold, $\tilde{H}_k(\mathcal{L}(\Sigma))=\tilde{H}_k(T)=\set{0}$ for $k=0$ and $k\geq n$ and $\tilde{H}_{n-1}(\mathcal{L}(\Sigma))=\tilde{H}_{n-1}(T)=\mathbb{Z}$.  Likewise, as the $\bar{U}^\pm$ are connected, oriented $n$-manifolds with boundary, $\tilde{H}_k(\bar{U}^\pm)=\tilde{H}_k(V^\pm)=0$ for $k=0$ and $k\geq n$. 
  
  In order to compute the remaining reduced homology groups, we use the Mayer-Vietoris long exact sequence for the reduced homology of $(V^-,V^+,\mathbb{S}^n)$.  This gives the following exact sequences for $k\geq 0$
  \begin{equation}\label{ExactSeq}
  \begin{tikzcd}
  	\tilde{H}_{k+1}(\mathbb{S}^n)\arrow{r} & \tilde{H}_k(T) \arrow{r}{J} & \tilde{H}_k(V^-)\oplus \tilde{H}_k(V^+)
  \arrow{r}	& \tilde{H}_k(\mathbb{S}^n). 
  \end{tikzcd}
  \end{equation}
  As $\tilde{H}_k(\mathbb{S}^n)=\mathbb{Z}$ for $k=n$ and is otherwise $\set{0}$, \eqref{ExactSeq} implies  that $J$ is surjective for $0\leq k\leq n-1$.  Hence, for these $k$, $\tilde{H}_k(\bar{U}^\pm)=\tilde{H}_k(V^\pm)=\set{0}$ and so the $U^\pm$ are homology $n$-balls as claimed. As such, \eqref{ExactSeq} further implies that $\tilde{H}_k(\mathcal{L}(\Sigma))=\tilde{H}_k(T)=\set{0}$ for $0\leq k \leq n-2$ completing the verification that $\mathcal{L}(\Sigma)$ is a homology $(n-1)$-sphere. 
   
   To conclude the proof, it is enough, by the Hurewicz theorem, to show that $\pi_1(U^\pm)=\pi_1(\bar{U}^\pm)=\set{1}$.
   To that end first observe that the maps
   $F^\pm: \mathbb{S}^n\to \bar{U}^\pm$ defined by
   $$
   F^\pm(p)=\left\{ \begin{array}{ll} p & p\in \bar{U}^\pm \\ {\phi}(p) & p \in U^\mp \end{array} \right.
   $$
   are continuous. Now suppose $\gamma$ is a closed loop in $\bar{U}^\pm$.  As $\pi_1(\mathbb{S}^n)=\set{1}$, there is a homotopy $H: \mathbb{S}^1\times[0,1]\to \mathbb{S}^n$ taking $\gamma$ to a point.  Clearly, $F^\pm\circ H: \mathbb{S}^1\times[0,1]\to \bar{U}^\pm$ is also a homotopy taking $\gamma$ to a point. That is, $\pi_1(\bar{U}^\pm)=\set{1}$.
\end{proof}

\begin{proof}
(of Corollary \ref{ACSDim3Cor})

By Theorem \ref{MainACSProp}, $\mathcal{L}(\Sigma)$ is a homology $2$-sphere.  By the classification of surfaces this means that $\mathcal{L}(\Sigma)$ is diffeomorphic to $\mathbb{S}^2$ and so  Alexander's Theorem \cite{Alexander} implies that both components of $\mathbb{S}^3\backslash \mathcal{L}(\Sigma)$ are diffeomorphic to $\Real^3$, proving the claim.
\end{proof}

\section{Surgery Procedure} \label{SurgerySec}
We prove Theorem \ref{MainTopThm} using Corollary \ref{ACSDim3Cor} and Theorem \ref{CondSurgThm}. 
\begin{proof} (of Theorem \ref{MainTopThm})

We first observe that $\rstar{3,\lambda_2}$ holds by \cite[Theorem B]{MarquesNeves} and that $\rsstar{3,\lambda_2}$ holds by \cite[Corollary 1.2]{BernsteinWang2}.  If $\Sigma$ is (after a translation and dilation) a self-shrinker, then, by \cite[Theorem 0.7]{CIMW}, $\Sigma$ is diffeomorphic to $\mathbb{S}^3$, proving the theorem. Otherwise, flow $\Sigma$ for a small amount of time by the mean curvature flow (using short time existence of for smooth closed initial hypersurfaces) to obtain a hypersurface, $\Sigma^\prime$, diffeomorphic to $\Sigma$ and, by Huisken's monotonicity formula, with $\lambda[\Sigma']<\lambda[\Sigma]$.  On the one hand, if the level set flow of $\Sigma'$ is non-fattening, then we set $\Sigma_0=\Sigma'$. On the other hand, if the level set flow of $\Sigma'$ is fattening, then we can take $\Sigma_0$ to be a small normal graph over $\Sigma'$ so that $\lambda[\Sigma_0]<\lambda[\Sigma]$ and, because the non-fattening condition is generic, the level set flow of $\Sigma_0$ is non-fattening.

Hence, the hypotheses of Section \ref{SingularitySec} hold and we may apply Theorem \ref{CondSurgThm} unconditionally to obtain a family of hypersurfaces $\Sigma^1, \ldots, \Sigma^N$ in $\Real^4$. As $\Sigma^N$ is diffeomorphic to $\mathbb{S}^3$, if $N=1$, then there is nothing more to show and so we may assume that $N>1$.  We will now show that $\Sigma^{N-1}$ is diffeomorphic to $\Sigma^N$ and hence to $\mathbb{S}^3$. 

Let us denote by $V= \cup_{j=1}^{m(N-1)} V_j^{N-1}$ and by $\hat{\Sigma}^N=\Sigma^N\backslash V$ and let $U= \cup_{j=1}^{m(N-1)} U_j^{N-1}$ and $\hat{\Sigma}^{N-1}=\Sigma^{N-1}\backslash U$ so $\hat{\Phi}^{N-1}: \hat{\Sigma}^N \to \hat{\Sigma}^{N-1}$ is the orientation preserving diffeomorpism given by Theorem \ref{CondSurgThm}.  By Corollary \ref{ACSDim3Cor}, each component of $\bar{U}$ is diffeomorphic to a closed three-ball $\bar{B}^3$.  Hence, each component of $\partial \hat{\Sigma}^{N-1}$ and $\partial \hat{\Sigma}^{N}$ is diffeomorphic to $\mathbb{S}^2$.  That is,  for $1\leq j \leq m(N-1)$, $\partial V_j^{N-1}$ is diffeomorphic to $\mathbb{S}^2$ and so, as $\Sigma^N$ is diffeomorphic to the three-sphere,  Alexander's theorem \cite{Alexander} implies that each $\bar{V}_j^{N-1}$ is diffeomorphic to the closed three-ball.  Hence, there are orientation preserving diffeomorphisms $\Psi_j^{N-1}: \bar{V}_j^{N-1}\to \bar{U}_j^{N-1}$. 

Denote by $\hat{\phi}^{N-1}_j: \partial V_j^{N-1} \to \partial U_j^{N-1}$ the diffeomorphism given by restricting $\hat{\Phi}^{N-1}$ and, likewise, let $\psi^{N-1}_j: \partial V_j^{N-1}\to \partial U_j^{N-1}$ denote the diffeomorphisms given by restricting $\Psi^{N-1}_j$.  Observe, that the orientation of $\hat{\Sigma}^{N}$ and the orientation on $\bar{V}$ induce opposite orientations on $\partial \bar{V}$.  Likewise, the orientation of $\hat{\Sigma}^{N-1}$ and that of $\bar{U}$ induce opposite orientations on $\partial \bar{U}$.   By construction, the $\hat{\phi}^{N-1}_j$ preserve the orientations induced from $\hat{\Sigma}^N$ and $\hat{\Sigma}^{N-1}$. Hence, as the orientations induced by $\bar{V}_j^{N-1}$ and $\bar{U}_j^{N-1}$ are opposite to those induced by $\hat{\Sigma}^N$ and $\hat{\Sigma}^{N-1}$, the $\hat{\phi}^{N-1}_j$ also preserve these orientations.  The same is true of the $\psi^{N-1}_j$.  As such, $\xi_j^{N-1}=(\psi_{j}^{N-1})^{-1}\circ \hat{\phi}_j^{N-1}\in \mathrm{Diff}_+(\partial V_j^{N-1})$, where $\mathrm{Diff}_+(M)$ is the space of orientation preserving self-diffeomorphisms of an oriented manifold $M$ (here we may use the orientation on $\partial V_j^{N-1}$ induced by either $\bar{V}$ or $\hat{\Sigma}^N$).  By \cite{Munkres} -- see also \cite{Smale} and \cite{Cerf} -- the space $\mathrm{Diff}_+(\mathbb{S}^2)$ is path-connected and so any element of $\mathrm{Diff}_+(\mathbb{S}^2)$ extends to an element of $\mathrm{Diff}_+(\bar{B}^3)$.  That is, there are diffeomorphism $\Xi_j^{N-1}\in \mathrm{Diff}_+( \bar{V}_j^{N-1})$ that restrict to $\xi_{j}^{N-1}$ on $\partial  V_j^{N-1}$.  Thus, the maps $\hat{\Psi}_j^{N-1}=\Psi_j^{N-1}\circ \Xi_{j}^{N-1}: \bar{V}_j^{N-1} \to \bar{U}_j^{N-1}$ are diffeomorphisms that agree with $\hat{\Phi}^{N-1}$ on the common boundary.

Define $\Phi^{N-1}:\Sigma^N \to \Sigma^{N-1}$ by
$$
\Phi^{N-1}(p)=\left\{ \begin{array}{cc} \hat{\Phi}^{N-1}(p) & p \in \hat{\Sigma}^N \\ \hat{\Psi}_j^{N-1}(p) & p\in V_j^{N-1}. \end{array} \right.
$$
By construction, this map is a homeomorphism.  However, it is a standard procedure to construct a diffeomorphism between $\Sigma^N$ and $\Sigma^{N-1}$ by smoothing this map out (see for instance \cite[Theorem 8.1.9]{Hirsch}).  Hence, $\Sigma^{N-1}$ is diffeomorphic to $\mathbb{S}^3$ and iterating this argument shows that $\Sigma=\Sigma^1$ is diffeomorphic to $\mathbb{S}^3$ as claimed.
\end{proof}

Theorem \ref{MainCondThm} follows from Theorem \ref{MainACSThm}, Theorem \ref{CondSurgThm} and the Mayer-Vietoris long exact sequence for reduced homology.  For completeness, we include a proof of the following standard topological fact which we will need to use.
\begin{lem} \label{HomBallLem}
Let $M$ be a closed $n$-dimensional manifold and $\Sigma \subset M$ a closed hypersurface.  If $M$ is a homology $n$-sphere and $\Sigma$ is a homology $(n-1)$-sphere, then each component of $M\backslash \Sigma$ is a homology $n$-ball.
\end{lem}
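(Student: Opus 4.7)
The plan is to run the Mayer--Vietoris long exact sequence in reduced homology for the open cover of $M$ obtained by thickening $\Sigma$ to a tubular neighborhood. The bulk of the argument is formal; the one geometric input will be the identification of the Mayer--Vietoris connecting map with a classical boundary of fundamental classes.

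First I would verify that $\Sigma$ separates $M$ into exactly two components. Being a closed homology $n$-sphere, $M$ is orientable; being a closed homology $(n-1)$-sphere, $\Sigma$ is orientable; hence the normal line bundle of $\Sigma$ in $M$ is trivial and $\Sigma$ is two-sided. Moreover, universal coefficients and the vanishing of $H_{n-1}(M;\mathbb{Z})$ and $H_{n-2}(M;\mathbb{Z})$ give $H_{n-1}(M;\mathbb{Z}/2)=0$, so the $\mathbb{Z}/2$-fundamental class of $\Sigma$ vanishes in $M$ and $\Sigma$ must separate. Write $M\setminus\Sigma=M_+\sqcup M_-$. Each closure $\bar M_\pm=M_\pm\cup\Sigma$ is a compact connected orientable $n$-manifold with boundary $\Sigma$, and (using a collar of $\Sigma$) $\bar M_\pm$ is homotopy equivalent to $M_\pm$.

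Next I would take an open tubular neighborhood $T$ of $\Sigma$, set $U_\pm=M_\pm\cup T$, and feed the open cover $M=U_+\cup U_-$ into Mayer--Vietoris, using that $U_+\cap U_-=T$ deformation retracts to $\Sigma$ and each $U_\pm$ deformation retracts to $\bar M_\pm$. The resulting sequence is
\begin{equation*}
\cdots\to\tilde H_k(\Sigma)\to\tilde H_k(\bar M_+)\oplus\tilde H_k(\bar M_-)\to\tilde H_k(M)\xrightarrow{\partial_k}\tilde H_{k-1}(\Sigma)\to\cdots.
\end{equation*}
Substituting $\tilde H_*(M)=\tilde H_*(\mathbb{S}^n)$ and $\tilde H_*(\Sigma)=\tilde H_*(\mathbb{S}^{n-1})$, the two neighbors of $\tilde H_k(\bar M_+)\oplus\tilde H_k(\bar M_-)$ vanish whenever $k\notin\{n-1,n\}$, forcing these groups to be zero. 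For $k=n$, both $\tilde H_n(\bar M_\pm)$ vanish because $\bar M_\pm$ is a compact $n$-manifold with nonempty boundary. The sequence therefore reduces to the single nontrivial segment
\begin{equation*}
0\to\mathbb{Z}\xrightarrow{\partial_n}\mathbb{Z}\to\tilde H_{n-1}(\bar M_+)\oplus\tilde H_{n-1}(\bar M_-)\to 0.
\end{equation*}

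The main obstacle is to show that $\partial_n$ is an isomorphism, since a priori it could be multiplication by some $d\neq\pm1$ and leave a $\mathbb{Z}/d$ cokernel. The plan is to evaluate $\partial_n$ directly on the fundamental class $[M]$: orient $\bar M_\pm$ compatibly with $M$, so that $[M]$ is represented by the sum of the relative fundamental cycles of $(\bar M_+,\Sigma)$ and $(\bar M_-,\Sigma)$. By the construction of the Mayer--Vietoris boundary (decompose a cycle by its supports in $U_\pm$ and take the singular boundary of one piece), $\partial_n[M]$ is identified with the singular boundary of the relative fundamental cycle of $(\bar M_+,\Sigma)$, which equals $\pm[\Sigma]$, a generator of $H_{n-1}(\Sigma)=\mathbb{Z}$. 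Hence $\partial_n$ sends a generator to a generator and is an isomorphism, its cokernel vanishes, and we conclude $\tilde H_k(\bar M_\pm)=0$ for every $k$. Since $\bar M_\pm\simeq M_\pm$, each component of $M\setminus\Sigma$ is a homology $n$-ball.
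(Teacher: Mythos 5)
Your proposal is correct and follows essentially the same route as the paper: a Mayer--Vietoris computation for the decomposition of $M$ along a thickened $\Sigma$, with the crucial observation that the connecting map $\partial_n$ sends $[M]$ to $\pm[\Sigma]$ and is therefore an isomorphism. You fill in two details the paper leaves implicit, namely the justification that the two-sided connected $\Sigma$ actually separates $M$ (via $H_{n-1}(M;\mathbb{Z}/2)=0$) and the chain-level identification of the Mayer--Vietoris boundary with the geometric boundary of the relative fundamental cycle; the paper simply asserts both.
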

\begin{proof}
Our hypotheses ensure that both $M$ and $\Sigma$ are connected and oriented.  Hence, $\Sigma$ is two-sided and there is an open $U^+\subset M$ so that $\Sigma =\partial U^+$.  Let $U^-=M\backslash \bar{U}^+$. To prove the lemma we will need to compute the Mayer-Vietoris long exact sequence for $(\bar{U}^-, \bar{U}^+, M)$.   Strictly speaking, we should ``thicken" $\bar{U}^+$ and $\bar{U}^-$ up with a regular tubular neighborhood of $\Sigma=\partial \bar{U}^\pm$ as in the proof of Theorem \ref{MainACSThm}, but we leave the details of this to the reader. 

 The Mayer-Vietoris long exact sequence and the fact that $M$ is a homology $n$-sphere and $\Sigma$ is a homology $(n-1)$-sphere gives the sequences
\begin{equation*}
   \begin{tikzcd}
   	\tilde{H}_{k+1}(M)\arrow{r}{\partial} \arrow[leftrightarrow]{d}{=} & \tilde{H}_k(\Sigma) \arrow{r} \arrow[leftrightarrow]{d}{=}& \tilde{H}_k(\bar{U}^-)\oplus \tilde{H}_k(\bar{U}^+)
   \arrow{r} \arrow[leftrightarrow]{d}{=}	& \tilde{H}_k(M) \arrow[leftrightarrow]{d}{=}\\
   \tilde{H}_{k+1}(\mathbb{S}^n) \arrow{r}{\partial} & \tilde{H}_k(\mathbb{S}^{n-1}) \arrow{r} & \tilde{H}_k(\bar{U}^-)\oplus \tilde{H}_k(\bar{U}^+)
      \arrow{r}	& \tilde{H}_k(\mathbb{S}^{n}).   
   \end{tikzcd}
 \end{equation*}
 For $0\leq k\leq n-2$ and $k\geq n+1$ this immediately gives that $\tilde{H}_k(\bar{U}_\pm)=\set{0}$.  When $k={n-1}$, the map $\partial$ is necessarily generated by $[M]\mapsto [\Sigma]$ where $[M] $ is the fundamental class of $M$ and $[\Sigma]$ is the fundamental class of $\Sigma$. In particular, this map is an isomorphism and so we conclude that $\tilde{H}_{n-1}(\bar{U}^\pm)=\set{0}$.    For the same reason, $\tilde{H}_n(\bar{U}^\pm)=\set{0}$, which verifies the claim.  
\end{proof}
 
\begin{proof}(of Theorem \ref{MainCondThm})

Arguing as in the first paragraph of the proof of Theorem \ref{MainTopThm}, we obtain $\Sigma^1,\ldots, \Sigma^N$ the hypersurfaces given by Theorem \ref{CondSurgThm}. As $\Sigma^N$ is diffeomorphic to $\mathbb{S}^n$, it is a homology $n$-sphere. In particular, if $N=1$, then there is nothing further to show.  As such, we may assume that $N>1$. 

 Let us show that $\Sigma^{N-1}$ is a homology $n$-sphere.  First, set $V= \cup_{j=1}^{m(N-1)} V_j^{N-1}$ and $\hat{\Sigma}^N=\Sigma^N\backslash V$ and let $U= \cup_{j=1}^{m(N-1)} U_j^{N-1}$ and $\hat{\Sigma}^{N-1}=\Sigma^{N-1}\backslash U$.  Next observe that, as $\partial U_j^{N-1}=\mathcal{L}(\Gamma_j^{N-1})$ for some $\Gamma_j^{N-1}\in \mathcal{ACS}_n^*(\Lambda)$, Theorem  \ref{MainACSThm} implies that each component of $\partial \hat{\Sigma}^{N-1}$ is a homology $(n-1)$-sphere.  Hence, as $\partial U=\partial \hat{\Sigma}^{N-1}$ is diffeomorphic to $\partial \hat{\Sigma}^N=\partial V$, we see that each component of $\partial V=\partial \hat{\Sigma}^N$ is a homology $(n-1)$-sphere and so Lemma \ref{HomBallLem} implies that each component of $\bar{V}$ is a homology $n$-ball.
 
We may now use the Mayer-Vietoris long exact sequence to compute that $\tilde{H}_k(\hat{\Sigma}^N)=\set{0}$ for $k\neq n-1$ and $\tilde{H}_{n-1}(\hat{\Sigma}^N)=\mathbb{Z}^{m(N-1)-1}$. To see this, consider the Mayer-Vietoris long exact sequence of $(\bar{V}, \hat{\Sigma}^N, \Sigma^N)$.  This long exact sequence and the fact that $\bar{V}$ is the union of homology $n$-balls gives, for $k>0$, the exact sequences
 \begin{equation*}
  \begin{tikzcd}
  	\tilde{H}_{k+1}(\Sigma^N)\arrow{r}{\partial} \arrow[leftrightarrow]{d}{=} & \tilde{H}_k(\partial V) \arrow{r} \arrow[leftrightarrow]{d}{=}& \tilde{H}_k(\bar{V})\oplus \tilde{H}_k(\hat{\Sigma}^N)
  \arrow{r}\arrow[leftrightarrow]{d}{=}	& \tilde{H}_k(\Sigma^N)\arrow[leftrightarrow]{d}{=}\\
  \tilde{H}_{k+1}(\mathbb{S}^n)\arrow{r}{\partial}  & \bigoplus\limits_{j=1}^{m(N-1)}\tilde{H}_k(\mathbb{S}^{n-1}) \arrow{r} & \tilde{H}_k(\hat{\Sigma}^N)
    \arrow{r}	& \tilde{H}_k(\mathbb{S}^n).
  \end{tikzcd}
   \end{equation*}
Hence, for $1\leq k\leq n-2$ and $k\geq n+1$, $\tilde{H}_k(\hat{\Sigma}^N)=\set{0}$.  When $k=n-1$, the map $\partial$ is generated by $ [\Sigma^N]\mapsto([\partial V_{1}^{N-1}],  \ldots, [\partial V_{m(N-1)}^{N-1}])$ where $[\Sigma^N]$ is the fundamental class of $\Sigma^N$ and $[\partial V_{j}^{N-1}]$ is the fundamental class of $\partial V_{j}^{N-1}$.  It follows that $ \tilde{H}_{n-1}(\hat{\Sigma}^N)=\mathbb{Z}^{m(N-1)-1}$ and, as this map is injective, that $\tilde{H}_{n}(\hat{\Sigma}^N)=\set{0}$.
 Finally,  as $\hat{\Sigma}^N$ is connected, $\tilde{H}_0(\hat{\Sigma}^N)=\set{0}$, which completes the computation.

By Theorem \ref{CondSurgThm}, $\hat{\Sigma}^N$ is diffeomorphic to $\hat{\Sigma}^{N-1}$ and so $\tilde{H}_k(\hat{\Sigma}^{N-1})=0$ for $k\neq n-1$ and $\tilde{H}_{n-1}(\hat{\Sigma}^{N-1})=\mathbb{Z}^{m(N-1)-1}$.  Furthermore, Theorem \ref{MainACSThm} implies that each component of $\bar{U}$ is contractible. Hence, applying the Mayer-Vietoris long exact sequence to $(\hat{\Sigma}^{N-1}, \bar{U}, \Sigma^{N-1})$ gives, for $k>0$,
  \begin{equation*}
   \begin{tikzcd}[column sep=12pt]
   \tilde{H}_k(\partial \bar{U}) \arrow{r} \arrow[leftrightarrow]{d}{=} & \tilde{H}_k(\bar{U})\oplus \tilde{H}_k(\hat{\Sigma}^{N-1})
   \arrow{r}	\arrow[leftrightarrow]{d}{=}& \tilde{H}_k(\Sigma^{N-1})  \arrow{r}\arrow[leftrightarrow]{d}{=} &\tilde{H}_{k-1}(\partial \bar{U})\arrow[leftrightarrow]{d}{=}\\
   \bigoplus\limits_{j=1}^{m(N-1)}\tilde{H}_k(\mathbb{S}^{n-1}) \arrow{r} & \tilde{H}_k(\hat{\Sigma}^{N-1}) \arrow{r} & \tilde{H}_k(\Sigma^{N-1})\arrow{r} & \bigoplus\limits_{j=1}^{m(N-1)} \tilde{H}_{k-1} (\mathbb{S}^{n-1}).
   \end{tikzcd}
  \end{equation*} 
  In particular, for $1\leq k\leq n-2$ and $k\geq n+1$, we obtain that $\tilde{H}_k(\Sigma^{N-1})=\set{0}$.  The Mayer-Vietoris long exact sequence further gives the exact sequences
  \begin{equation*}
    \begin{tikzcd}
     \tilde{H}_{n-1}(\partial \bar{U}) \arrow{r}{\delta}  \arrow[leftrightarrow]{d}{=}& \tilde{H}_{n-1}(\bar{U} )\oplus \tilde{H}_{n-1}(\hat{\Sigma}^{N-1})   \arrow{r} \arrow[leftrightarrow]{d}{=} & \tilde{H}_{n-1}(\Sigma^{N-1})  \arrow{r} \arrow[leftrightarrow]{d}{=} &\tilde{H}_{n-2}(\partial \bar{U}) \arrow[leftrightarrow]{d}{=}\\ 
    \mathbb{Z}^{m(N-1)} \arrow{r}{\delta} & \mathbb{Z}^{m(N-1)-1}
    \arrow{r}	& \tilde{H}_{n-1}(\Sigma^{N-1})  \arrow{r} &\set{0}. 
    \end{tikzcd}
   \end{equation*} 
   Here  $\delta$ is given by $(l_1, \ldots, l_{m(N-1)})\mapsto (l_1-l_{m(N-1)}, \ldots, l_{m(N-1)-1}-l_{m(N-1)})$. As $\delta$ is surjective, it follows that $\tilde{H}_{n-1}(\Sigma^{N-1})=\set{0}$.
 Finally, as $\Sigma^{N-1}$ is an oriented, connected $n$-dimensional manifold $\tilde{H}_n(\Sigma^{N-1})=\mathbb{Z}$ and $\tilde{H}_0(\Sigma^{N-1})=\set{0}$.   Hence, $\Sigma^{N-1}$ is a homology $n$-sphere.

As our argument only used that ${\Sigma}^N$ was a homology $n$-sphere, we may repeat it to see that each of the ${\Sigma}^i$ is a homology $n$-sphere and so conclude that $\Sigma$ is one as well.
\end{proof}

\appendix
\section{}
Fix an open subset $U\subset \Real^{n+1}$.  A \emph{hypersurface in $U$}, $\Sigma$, is a proper, codimension-one submanifold of $U$. A \emph{smooth mean curvature flow in $U$}, $S$, is a collection of hypersurfaces in $U$, $\set{\Sigma_t}_{t\in I}$, $I$ an interval, so that:
\begin{enumerate}
\item For all $t_0\in I$ and $p_0\in \Sigma_{t_0}$ there is a $r_0=r_0(p_0,t_0)$ and an interval $I_0=I_0(p_0,t_0)$ with $(p_0,t_0)\in B_{r_0}^{n+1}(p_0)\times I_0\subset U\times I$;
\item There is a smooth map $F: B_{1}^n\times I_0\to \Real^{n+1}$
so that $F_t(p)=F(p,t): B_1^n\to \Real^{n+1}$ is a parameterization of $B_{r_0}^{n+1}(p_0)\cap \Sigma_t$; and
\item $\left(\frac{\partial}{\partial t} F (p,t)\right)^\perp= \mathbf{H}_{\Sigma_t}(F(p,t)).$
\end{enumerate}
It is convenient to consider the \emph{space-time track} of $S$ (also denoted by $S$):
\begin{equation}
{S}=\set{(\xX(p),t)\in\mathbb{R}^{n+1}\times \mathbb{R}: p\in\Sigma_t}\subset U\times I.
\end{equation}
This is a smooth submanifold of space-time and is transverse to each constant time hyperplane $\Real^{n+1}\times \set{t_0}$.  Along the space-time track ${S}$,  let $\frac{d}{dt}$ be the smooth vector field
\begin{equation}
\left.\frac{d}{dt}\right|_{(p,t)}=\frac{\partial}{\partial t}+\mathbf{H}_{\Sigma_t}(p).
\end{equation}
It is not hard to see that this vector field is tangent to ${S}$ and the position vector satisfies
\begin{equation} \label{MCFEqn}
\frac{d}{dt} \xX(p,t)=\mathbf{H}_{\Sigma_t}(p).
\end{equation}

It is a standard fact that if each $\Sigma_t$ in $S$ is closed, i.e. is compact and without boundary, then there is a smooth map
$$
F:M\times I \to \Real^{n+1}
$$
so that each $F_t=F(\cdot,t): M\to \Real^{n+1}$ is a parameterization of $\Sigma_t$ a closed $n$-dimensional manifold $M$. As a consequence, each $\Sigma_t$ is diffeomorphic to $M$.
%It is possible to choose $F$ so
%$$
%\frac{d}{dt} F(p,t)=\mathbf{H}_{\Sigma_t}(F(p,t)).
%$$

We will need the following generalization of this last fact to manifolds with boundary. 
\begin{prop} \label{BoundaryDiffProp}
    Fix $R\in (0,\infty]$ and let $\set{\bar{B}_{2r_1}(\xX_1), \ldots, \bar{B}_{2r_m}(\xX_m)}$ be a collection of pairwise disjoint balls in $B_{R}\subset \Real^{n+1}$ and let $U=B_{2R}\backslash \bigcup_{i=1}^m \bar{B}_{r_i}(\xX_i)$. 
	If $\set{\Sigma_{t}}_{t\in (-\tau,\tau)}$ is a smooth mean curvature flow in $U$ with the property that
	\begin{enumerate}
	\item Each $\hat{\Sigma}_t=\Sigma_t\cap \left(\bar{B}_R\backslash 
	\bigcup_{i=1}^m B_{2 r_i}(\xX_i)\right)$ is compact,
	\item For each $1\leq i \leq m$, $\partial B_{2 r_i} (\xX_i)$ intersects $\Sigma_t$ transversally and non-trivially for all $t\in (-\tau,\tau)$,
	\item If $R<\infty$, then $\partial B_R$ intersects $\Sigma_t$ transversally and non-trivially for all $t\in (-\tau,\tau)$,
	\end{enumerate}
	then, for any $t_1,t_2\in (-\tau,\tau)$, $\hat{\Sigma}_{t_1}$ and $\hat{\Sigma}_{t_2}$ are diffeomorphic as compact manifolds with boundary.
\end{prop}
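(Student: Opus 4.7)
The plan is to regard the truncated space-time track
\[
\hat{S} = \{(\xX, t) \in S : \xX \in \bar{B}_R \setminus \bigcup_{i=1}^m B_{2r_i}(\xX_i)\}
\]
as a smooth manifold with boundary and to construct a smooth vector field $V$ on $\hat{S}$ with $dt(V) \equiv 1$ that is tangent to $\partial \hat{S}$. Integrating $V$ will then produce, for any $t_1, t_2 \in (-\tau, \tau)$, a flow whose time-$(t_2 - t_1)$ map restricts to a diffeomorphism $\hat{\Sigma}_{t_1} \to \hat{\Sigma}_{t_2}$ of compact manifolds with boundary.

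First I would verify that $\hat{S}$ really is a smooth manifold with boundary: since the balls $\bar{B}_{2r_i}(\xX_i)$ are pairwise disjoint and contained in the open ball $B_R$, the distinct boundary pieces of $\hat{S}$ are pairwise disjoint, so no corners appear; the transversality hypotheses (2) and (3) make each $\partial B_{2r_i}(\xX_i) \cap S$, and (when $R < \infty$) $\partial B_R \cap S$, a smooth hypersurface of $S$. Assumption (1) moreover ensures that $t : \hat{S} \to (-\tau, \tau)$ is proper.

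The core of the argument is the construction of $V$. The natural candidate $\frac{d}{dt} = \partial_t + \mathbf{H}_{\Sigma_t}$ from \eqref{MCFEqn} is tangent to $S$ and satisfies $dt(\frac{d}{dt}) = 1$, but need not be tangent to $\partial \hat{S}$. Setting $V = \frac{d}{dt} + W$ for $W$ tangent to each time slice $\Sigma_t$ preserves $dt(V) = 1$; writing $\rho_i(\xX) = |\xX - \xX_i|^2$, so the relevant boundary piece is $\{\rho_i = 4 r_i^2\} \cap \hat{S}$, and using $\partial_t \rho_i = 0$, the boundary-tangency condition $V(\rho_i) = 0$ becomes $W(\rho_i) = -\langle \mathbf{H}_{\Sigma_t}, \nabla \rho_i\rangle$. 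Transversality gives $\nabla^{\Sigma_t} \rho_i \neq 0$ along the boundary, so near each boundary piece one may take
\[
W_i = -\frac{\langle \mathbf{H}_{\Sigma_t}, \nabla \rho_i\rangle}{|\nabla^{\Sigma_t}\rho_i|^2} \, \nabla^{\Sigma_t} \rho_i,
\]
with an analogous correction near $\partial B_R$ using $|\xX|^2$. Cutoff functions supported in a suitable collar of each boundary piece (the pieces being pairwise disjoint) then combine these into a smooth global $W$, and $V = \frac{d}{dt} + W$ is the desired vector field.

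The only real technical point is confirming that the flow of $V$ delivers a diffeomorphism between any two time slices. Smoothness of each $W_i$ is immediate from transversality, which provides a positive lower bound on $|\nabla^{\Sigma_t}\rho_i|$ on the compact boundary piece over any $[t_1, t_2] \subset (-\tau, \tau)$. Since $dt(V) \equiv 1$, the flow lines of $V$ increase $t$ at unit speed; the properness of $t|_{\hat{S}}$ together with tangency of $V$ to $\partial \hat{S}$ rules out escape from $\hat{S}$ in finite time. Hence the time-$(t_2 - t_1)$ map of the flow carries $\hat{\Sigma}_{t_1}$ diffeomorphically onto $\hat{\Sigma}_{t_2}$ as compact manifolds with boundary. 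A cleaner packaging of the same idea is to invoke Ehresmann's fibration theorem for manifolds with boundary: $t : \hat{S} \to (-\tau, \tau)$ is a proper submersion whose restriction to each boundary component is also a submersion (again by transversality), hence a locally trivial fiber bundle, and in particular all fibers are pairwise diffeomorphic.
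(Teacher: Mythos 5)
Your proposal is correct and follows essentially the same approach as the paper: both modify $\tfrac{d}{dt}=\partial_t+\mathbf{H}_{\Sigma_t}$ by a cutoff tangential vector field so that the result is tangent to the lateral boundary of the space-time track, and both then integrate this vector field to produce the diffeomorphism; your formula for $W_i$ reduces exactly to the paper's $\mathbf{V}$ in the special case the paper writes out. The only cosmetic differences are that the paper restricts the space-time track to $[t_1,t_2]$ to get a compact manifold with corners and invokes ODE theory directly, whereas you keep the open interval and appeal to properness of $t$ (or, equivalently, Ehresmann's fibration theorem).
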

\begin{proof}
    For simplicity, we consider only $R=\infty$, $m=1$, $\xX_1=\OO$ and $r_1=\frac{1}{2}$.  It is straightforward to extend the argument to the general case.
	Let $S$ be the space-time track of the flow, so $S$ is a smooth hypersurface in $(\Real^{n+1}\backslash \bar{B}_{1/2})\times(-\tau,\tau)$.   As each $\Sigma_t$ intersects $\partial B_1$ transversally, it is clear that $S$ meets $\partial B_1 \times (-\tau,\tau)$ transversally.  In particular, $\tilde{S}=S\backslash \left({B}_1 \times (-\tau,\tau)\right)$ is a smooth hypersurface with boundary.  Let $\tilde{B}=\partial \tilde{S}=\set{(p,t):p\in \partial B_1\cap \Sigma_t, t\in (-\tau,\tau)}$.   
	
	Without loss of generality we may assume that the given $t_1,t_2$ satisfy $t_1<t_2$. Let $\hat{S}= \tilde{S}\cap \left(\Real^{n+1}\times [t_1,t_2]\right)$ and $\hat{B}=\tilde{B}\cap \left(\Real^{n+1}\times [t_1,t_2]\right)$.  Observe that $\hat{S}$ is a compact manifold with corners and $\hat{B}$ is one of its boundary strata.  The other two boundary strata are $\hat{\Sigma}_{t_1}\times \set{t_1}$ and $\hat{\Sigma}_{t_2}\times \set{t_2}$.
	
	 As $\partial B_1$ meets each $\Sigma_t$ transversally and $\hat{B}$ is compact, there is an $\epsilon>0$ so that, for $(p,t)\in \hat{B}$, $|\xX^\top(p,t)|\geq 2\epsilon$, where $\xX^\top$ is the tangential component of the position vector.  By continuity there is a $\frac{1}{2}>\delta>0$ so that, for any $t\in [t_1,t_2]$ and $p\in\left(\bar{B}_{1+\delta} \backslash B_{1-\delta}\right)\cap \Sigma_t$,  $|\xX^\top(p,t)|\geq \epsilon$. 
  Now let $\eta \in C^\infty_0(\Real^{n+1})$  be a smooth function with $0\leq \eta\leq 1$,  $\eta=1$ on $\partial B_1$ and $\spt(\eta)\subset\bar{B}_{1+\delta} \backslash B_{1-\delta}$.
  For $(p,t)\in \hat{S}$ consider the vector 
   $$
   \mathbf{V}(p,t)= - \eta(\xX(p,t)) \frac{ (\xX(p,t)\cdot \mathbf{H}_{\Sigma_t}(p))}{|\xX^\top(p,t)|^2} \xX^\top(p,t) 
   $$
and observe this gives a smooth vector field on $S$ that restricts to a smooth compactly supported vector field on each $\Sigma_t$.
Let $\mathbf{W}=\frac{d}{dt}+\mathbf{V}$ which is a smooth vector field on ${S}$.

We claim that $\mathbf{W}$ is tangent to $\hat{B}$ and transverse to $\hat{\Sigma}_{t_1}\times\set{t_1}\cup \hat{\Sigma}_{t_2}\times \set{t_2}$. As $\mathbf{V}$ is tangent to $\Sigma_t\times\set{t}$, the transversality of $\mathbf{W}$ follows from the transversality of $\frac{d}{dt}$. This transversality follows immediately from the definition of $\frac{d}{dt}$. To see the tangency note that, by construction, $\hat{B}=\set{(p,t)\in \hat{S}: |\xX(p,t)|^2=1}$.  For $(p,t)\in \hat{B}$, one computes
\begin{align*}
\mathbf{W}\cdot |\xX(p,t)|^2& = 2 \xX(p,t) \cdot \nabla_\mathbf{W} \xX(p,t)\\
% &= 2\xX(p,t) \cdot \frac{d}{dt} \xX(p,t)+2 \xX(p,t) \cdot \mathbf{V}\cdot \xX(p,t)\\
&=2\xX(p,t) \cdot \mathbf{H}_{\Sigma_t}(p)-2\eta(\xX(p,t)) \frac{ (\xX(p,t)\cdot \mathbf{H}_{\Sigma_t}(p))}{|\xX^\top(p,t)|^2} \xX(p,t)\cdot \xX^\top(p,t) \\
&=0
\end{align*}
where the last equality used that $(p,t)\in \hat{B}$ so $\eta(\xX(p,t))=1$.   This verifies the claim.

To conclude the proof observe that, as $\hat{S}$ is compact and $\mathbf{W}$ is tangent to $\hat{B}$ and transverse to $\hat{\Sigma}_{t_1}\times\set{t_1}\cup \hat{\Sigma}_{t_2}\times\set{t_2}$, standard ODE theory gives that for any $P_0=(p_0,t_0)\in \hat{S}$  the initial value problem 
$$
\left\{ \begin{array}{c} \dot{\gamma}(s)=\mathbf{W}(\gamma(s)) \\ \gamma_{P_0}(0)=P_0 \end{array}
\right.
$$
has a unique smooth solution $\gamma_{P_0}:[t_1-t_0,t_2-t_0]\to \hat{S}$ which depends smoothly on $P_0$.  These solutions satisfy $t(\gamma_{P_0}(s))=s+t_0$ and so there is a diffeomorphism $\phi:\Sigma_{t_1}\to \Sigma_{t_2}$ given by $(\phi(p),t_2)=\gamma_{(p,t_1)}(t_2-t_1)$.
\end{proof}

\end{document}